\chardef\bslash=`\\ % p. 424, TeXbook
\newtheorem{theorem}{Theorem}[section]
\newtheorem{corollary}[theorem]{Corollary}
\newtheorem{lemma}[theorem]{Lemma}
\newtheorem{proposition}[theorem]{Proposition}
\newtheorem{example}[theorem]{Example}
\newtheorem{remark}[theorem]{Remark}
\newtheorem{question}[theorem]{Question}
\theoremstyle{remark}
\newcommand{\B}{\mathcal{B}}
\newcommand{\acts}{\curvearrowright}
\newcommand{\R}{\mathbb{R}}
\newcommand{\N}{\mathbb{N}}
\newcommand{\Z}{\mathbb{Z}}
\newcommand{\E}{\mathbb{E}}
\newcommand{\En}{\mathcal{H}}
\newcommand{\fEn}{\mathcal{H}^*}
\newcommand{\Prob}{\mathrm{Prob}}
\newcommand{\dist}{\mathrm{dist}}
\newcommand{\pull}{\Pi}
\newcommand{\Hom}{\mathrm{Hom}}
\newcommand{\Sym}{\mathrm{Sym}}
\newcommand{\Map}{\mathrm{Map}}
\newcommand{\mx}{\underline{x}}
\newcommand{\my}{\underline{y}}
\newcommand{\fx}{\mathbf{x}}
\newcommand{\fy}{\mathbf{y}}
\newcommand{\open}{\mathcal{O}}
\newcommand{\Appr}{\mathrm{Appr}}
\newcommand{\Pb}{\mathbb{P}}
\newcommand{\Cay}{G}
\newcommand{\past}{\mathcal{P}}
\newcommand{\eval}[2][\right]{\relax
  \ifx#1\right\relax \left.\fi#2#1\rvert}
\newcommandx{\unsure}[2][1=]{\todo[linecolor=red,backgroundcolor=red!25,bordercolor=red,#1]{#2}}
\newcommandx{\change}[2][1=]{\todo[linecolor=blue,backgroundcolor=blue!25,bordercolor=blue,#1]{#2}}
\newcommandx{\info}[2][1=]{\todo[linecolor=OliveGreen,backgroundcolor=OliveGreen!25,bordercolor=OliveGreen,#1]{#2}}
\newcommandx{\improvement}[2][1=]{\todo[linecolor=Plum,backgroundcolor=Plum!25,bordercolor=Plum,#1]{#2}}
\newcommandx{\thiswillnotshow}[2][1=]{\todo[disable,#1]{#2}}
\begin{document}

% \title[short text for running head]{full title}
\title[Kieffer-Pinsker type formulas for Gibbs measures on sofic groups]{Kieffer-Pinsker type formulas\\for Gibbs measures on sofic groups}

%    Only \author and \address are required; other information is
%    optional.  Remove any unused author tags.

%    author one information
% \author[short version for running head]{name for top of paper}

\author[Raimundo Brice\~no]{Raimundo Brice\~no}
\address{Facultad de Matem\'aticas\\Pontificia Universidad Cat\'olica de Chile\\Santiago\\Chile}
\email{raimundo.briceno@mat.uc.cl}
\thanks{The author acknowledges the support of ANID/FONDECYT de Iniciación en Investigación 11200892.}

%    \subjclass is required.
\subjclass[2010]{Primary 37A35, 37A60, 82B20; secondary 60B10, 37A15, 37A25, 37A50}

%\begin{keyword}[class=MSC2020]
%%\kwd[Primary ]{00X00}
%%\kwd{00X00}
%%\kwd[; secondary ]{00X00}
%\kwd[Primary ]{82B20}
%\kwd{82B41}
%\kwd{60B10}
%\kwd[; secondary ]{05C69}
%\kwd{37A15}
%\kwd{37A25}
%\kwd{37A50}
%%37A35 Entropy and other invariants, isomorphism, classification in ergodic theory
%%37A60 Dynamical  aspects  of  statistical  mechanics
%%82B20 Lattice  systems  (Ising,  dimer,  Potts,  etc.) and systems on graphs arising in equilibrium statistical mechanics
%%05C15 Coloring of graphs and hypergraphs
%%05C63 Infinite graphs
%%05C69 Vertex subsets with special properties (dominating sets, independent sets, cliques, etc.)
%%05C70 Edge  subsets  with  special  properties  (factorization, matching, partitioning, covering and packing,etc.)
%%60K35 Interacting  random  processes;   statistical  mechanics  type  models;  percolation  theory 
%%60B10Convergence of probability measures
%%82B41Random walks, random surfaces, lattice animals,etc.   in  equilibrium  statistical  mechanics  
%%37A15General  groups  of  measure-preserving  transfor-mations and dynamical systems 
%%37A25Ergodicity, mixing, rates of mixing
%%37A50 Dynamical systems and their relations with probability  theory  and  stochastic  processes  
%\end{keyword}

\date{}

\dedicatory{}

%    Abstract is required.
\begin{abstract}
Given a countable sofic group $\Gamma$, a finite alphabet $A$, a subshift $X \subseteq A^\Gamma$, and a potential $\phi: X \to \R$, we give sufficient conditions on $X$ and $\phi$ for expressing, in the uniqueness regime, the sofic entropy of the associated Gibbs measure $\mu$ as the limit of the Shannon entropies of some suitable finite systems approximating $\Gamma \acts (X,\mu)$. Next, we prove that if $\mu$ satisfies strong spatial mixing, then the sofic pressure admits a formula in terms of the integral of a random information function with respect to any $\Gamma$-invariant Borel probability measure with nonnegative sofic entropy. As a consequence of our results, we provide sufficient conditions on $X$ and $\phi$ for having independence of the sofic approximation for sofic pressure and sofic entropy, and for having locality of pressure in some relevant families of systems, among other applications. These results complement and unify those of Marcus and Pavlov (2015), Alpeev (2017), and Austin and Podder (2018).
%This extends and combines work from Alpeev (2015), Marcus-Pavlov (2015), and Austin-Podder (2018).
\end{abstract}

\keywords{Countable group; sofic entropy; sofic pressure; variational principle; subshift; Gibbs measure; local weak* convergence; Shannon entropy; strong spatial mixing; phase transition; invariant random order; coupling}

\maketitle

\setcounter{tocdepth}{1}
\tableofcontents

\section{Introduction}

Sofic groups were introduced by Gromov \cite{gromov1999endomorphisms} and Weiss \cite{weiss2000sofic}. They include all amenable groups and residually finite groups, and it is an open question whether every countable group is sofic or not.

Since the seminal work of Lewis Bowen \cite{bowen2010measure}, there has been in intensive development of the study of dynamical invariants for the actions of countable sofic groups \cite{bowen2018brief}, including the definition of topological sofic entropy \cite{kerr2011entropy,kerr2013soficity} and sofic pressure \cite{1-chung}, among others \cite{1-bowen}. An important feature of these quantities is that, in contrast to the more classical versions for the amenable case, they may depend on the sofic approximation to the group and they could also take the value negative infinity. Thus, a relevant problem in this theory is to establish sufficient and necessary conditions for discriminating when we have such behavior. Recently, in \cite{1-airey}, Airey, Bowen, and Lin proved the existence of a topological dynamical system with two different positive topological sofic entropies. This is the first example of this kind and to find analogue examples in the measure-theoretic case (i.e., to find a measure preserving transformations with two different positive sofic entropies) remains an open problem.

The example in \cite{1-airey} was of a symbolic nature and, more specifically, proper $2$-colorings of a special kind of hypergraph that was susceptible of a delicate probabilistic analysis. This kind of context seems very promising for finding more relevant examples related to the aforementioned problems in entropy theory. Intimately related to these kind of systems are the so-called \emph{Gibbs measures}, which are Borel probability measures defined by prescribing some conditional expectations known as \emph{DLR equations} \cite{georgii2011gibbs}.

Gibbs measures are not an intrinsically dynamical object but they interact very nicely with them, and sometimes this turns out to be an advantage. In the amenable case, it is sufficient to have a single (not necessarily invariant) Gibbs measure in order to conclude, through a standard averaging argument, that there exists an invariant one. Moreover, also in the amenable case, and under mild mixing conditions on the support, there is a correspondence between the equilibrium states for a given potential and the associated Gibbs measures \cite{1-ruelle}. On the other hand, in the general sofic case, these questions are not entirely settled. For example, in \cite{alpeev2015entropy}, Alpeev managed to establish the existence of invariant Gibbs measures when the support is the full shift (similar results were also obtained by Grigorchuk and Stepin in \cite{grigorchuk1985gibbs} for the residually finite case), and recently, in \cite{1-shriver}, Shriver proved that in the full shift case, every free energy density minimizer is a Gibbs measure.

In parallel, on the one hand Alpeev in \cite{alpeev2017random}, and on the other hand, Austin and Podder in \cite{austin2018gibbs}, based on a statistical physics approach, established conditions for computing the sofic entropy of Gibbs measures by means of the Shannon entropy of some suitable sequence of finite models. They also proved a formula in terms of what Austin and Podder called \emph{percolative entropy}, a special kind of average conditional Shannon entropy with respect to some random subsets of the group involved, obtained through a percolation process. The results in \cite{alpeev2017random} and \cite{austin2018gibbs} have coincidences but also some differences. For example, in \cite{alpeev2017random}, Alpeev works with general sofic groups and the full shift, and in \cite{austin2018gibbs}, Austin and Podder work on trees ---which admit a particularly nice kind of sofic approximation--- but allowing hard constraints. A key condition that they both required for most of their results is that there is a unique Gibbs measure, in addition to correlation decay properties like \emph{strong spatial mixing} as in  \cite{austin2018gibbs} or related conditions like the \emph{Dobrushin's uniqueness criterion} or the presence of an \emph{attractive Gibbs structure} as in \cite{alpeev2017random}. In this work we focus our attention in the strong spatial mixing case, a property that has received attention in the last years, specially in the context of counting problems (e.g., see \cite{3-weitz}).

From another side, since the work of Hochman and Meyerovitch \cite{1-hochman}, in symbolic dynamics it has been important to establish conditions for having efficient algorithms to approximate the topological entropy of $\Z^d$ subshifts and more general groups. In order to tackle this problem, in \cite{1-marcus}, Marcus and Pavlov developed what they called an \emph{integral representation of pressure} in the $\Z^d$ context. One motivation for their results was to obtain a simpler representation of pressure in terms of the integral of the information function with respect to a simple measure, usually atomic for algorithmic purposes. Coincidentally, a sufficient condition for having such convenient representation is that the Gibbs measure involved satisfies strong spatial mixing plus some topological conditions on the support that, among other required consequences, provide control over the information function. A couple of years later, the topological conditions on the support were relaxed in \cite{1-briceno} by the introduction of the \emph{topological strong spatial mixing property}, a combinatorial property that interacts very well with the strong spatial mixing property and that in some cases is a necessary condition. Subshifts that satisfy the topological strong spatial mixing property are necessarily \emph{subshifts of finite type} but not vice versa and, in \cite{3-briceno}, it was fully characterized which set of constraints induce this property in the abstract context of relational structures.

In \cite{2-briceno}, there were established some extensions of the work of Marcus and Pavlov to the case of \emph{orderable} amenable groups. In \cite{1-alpeev}, Alpeev, Meyerovitch, and Ryu introduced in the amenable case what they called the \emph{Kieffer-Pinsker formula} for the Kolmogorov-Sinai entropy of a finite entropy partition by means of invariant random orders. Invariant random orders are a stochastic generalization of invariant (deterministic) orders in the context of orderable groups and an important advantage is that every countable group has at least one invariant random order. A canonical invariant random ordering of a group is the one induced by a percolation process, namely, to every element in $g \in \Gamma$ we attach i.i.d. random variables $\chi_g$ uniformly distributed in $[0,1]$, and for every realization of this process, we order the elements in $\Gamma$ according to the value of $\chi_g$. These kind of ideas were already present in the work of Kieffer \cite{kieffer1975generalized} and it was already observed in \cite{austin2018gibbs} the relationship between this order and percolative entropy. 

In this work, we build up on the results of Marcus and Pavlov, Alpeev, and Austin and Podder, and provide an extension and unification of them, with emphasis in the constrained case, i.e., proper subshifts. First, in Section \ref{sec2}, we introduce the basic definitions regarding sofic groups, sofic entropy, and symbolic dynamics. In Section \ref{sec3}, and assuming the topological strong spatial mixing property, we develop a formalism for studying subshifts by means of derived finite configuration spaces that approximate well the sofic action; in particular, in Theorem \ref{thm:partition}, we prove that the sofic pressure can be recovered as the limit superior of the normalized logarithm of the partition functions associated to such finite systems. Next, in Section \ref{sec4}, we introduce and study the properties of \emph{local weak* convergence}, a notion of convergence that is useful for comparing a sequence of measures supported on finite configuration spaces and a measure on a subshift; later, we define derived Gibbs measures supported on the derived finite configuration spaces and establish conditions for having local weak* convergence of them to a given Gibbs measure on a subshift. In Section \ref{sec5}, in Theorem \ref{thm:entropy}, we prove that when there is a unique Gibbs measure, the sequence of Shannon entropies obtained from the derived Gibbs measures converges to the sofic entropy; these results follow \cite{alpeev2017random}, but they are developed for proper subshifts satisfying the topological strong spatial mixing property and the techniques involved require a more careful analysis of entropy. We also prove in Corollary \ref{cor:eq} that, under the topological strong spatial mixing property and the uniqueness assumptions, Gibbs measures are necessarily equilibrium states. In Section \ref{sec6}, we introduce the strong spatial mixing property and establish some of its consequences. Then, in Section \ref{sec7}, given a invariant random past, we define the notion of \emph{ordered sofic approximation}; as an example, we prove that every sofic approximation can be ordered relative to the percolation order and that in the amenable case, the canonical sofic approximation obtained from a F{\o}lner sequence can be ordered relative to any invariant random past, thus providing a framework to relate the results in \cite{alpeev2017random,austin2018gibbs} with the results in \cite{1-marcus,2-briceno}, through the formalism introduced in \cite{1-alpeev}. In Section \ref{sec8}, we prove Theorem \ref{thm:main}, that we regard as the main result of our work, namely, if the support satisfies the topological strong spatial mixing property and the Gibbs measure satisfies strong spatial mixing, then the sofic pressure has an integral representation as in \cite{1-marcus}. In Section \ref{sec9}, we show how to recover the results in \cite{1-marcus,1-briceno,alpeev2017random,austin2018gibbs} from ours. As an application, in Corollary \ref{cor:indep} we provide sufficient conditions for having independence of the sofic approximation outside of the unconstrained case. Next, we show that, under some extra conditions, pressure turns out to be a local quantity, i.e., if a model is defined on two finitely generated groups whose Cayley graphs coincide on a large ball, then the associated sofic pressures are close in value. Finally, based on the techniques developed in \cite{5-briceno} for the amenable case, we outline a description of how to obtain a special representation of sofic pressure in terms of \emph{trees of self-avoiding walks} when the support has a safe symbol.

%Ruelle \cite{ruelie1973statistical}
%\cite{ briceno2017strong}

\section{Preliminaries}
\label{sec2}
%
%We use Landau (`big- $O$' and little- $o$') notation without further comment. Among real numbers, we sometimes write `$a \approx_{\varepsilon} b$' in place of `$|a-b|<\varepsilon$'. The notation `$\varepsilon_{n} \downarrow 0$' means that $\left(\varepsilon_{n}\right)_{n \geqslant 1}$ is a non-increasing sequence of strictly positive real numbers which tends to $0$. \textcolor{red}{This is copied from another paper}

\subsection{Sofic groups}

Let $\Gamma$ be a countable group with identity element $1_\Gamma$. Consider $\delta>0$ and $F \Subset \Gamma$, where $F \Subset \Gamma$ denotes that $F$ is a finite subset of $\Gamma$. Given a finite set $V$, we say that a map $\sigma: \Gamma \to \Sym(V)$ is 
\begin{enumerate}
\item[(i)] {\bf $(F, \delta)$-multiplicative} if $(1-\delta) |V| \leq |\{v \in V: \sigma^g(\sigma^h(v))=\sigma^{gh}(v)\}|$ for all $g, h \in F$, and
\item[(ii)] {\bf $(F, \delta)$-trace-preserving} if $\delta |V| \geq |\{v \in V: \sigma^g(v) = v\}|$ for all $g \in F \backslash\{1_\Gamma\}$,
\end{enumerate}
where $\sigma^g$ is just an alternative way to write $\sigma(g)$. Given a sequence of finite sets $\{V_n\}_n$ with $\lim_n\left|V_{n}\right|=\infty$, we say that a sequence of maps $\Sigma = \left\{\sigma_{n}: \Gamma \rightarrow \Sym\left(V_{n}\right)\right\}_n$ is a {\bf sofic approximation} to $\Gamma$ if for every $F \Subset \Gamma$ and $\delta>0$, there exists $n_0$ such that $\sigma_{n}$ is $(F, \delta)$-multiplicative and $(F, \delta)$-trace-preserving for every $n \geq n_0$. We say that a group $\Gamma$ is {\bf sofic} if it admits a sofic approximation (see \cite{gromov1999endomorphisms,weiss2000sofic}). We will assume that we have an arbitrary fixed sequence $\{F_r\}_r$ of finite subsets of $\Gamma$ such that $F_1 = \{1_\Gamma\}$, $F_r \subseteq F_{r+1}$, and $\bigcup_r F_r = \Gamma$. We call such a sequence an {\bf exhaustion}. Notice that, since $\{F_r\}_r$ is an exhaustion, it is enough to check along the sequence $\{F_r\}_r$ that $\Sigma$ is a sofic approximation, since the $(F, \delta)$-multiplicative and the $(F, \delta)$-trace-preserving properties are monotone in $F$.

\begin{example}
{\normalfont
Two important examples of sofic groups are the following.
\begin{itemize}
\item {\bf Amenable groups:} Given an amenable group $\Gamma$, a (left) F{\o}lner sequence $\{T_n\}_n$ for it, and $g \in \Gamma$, consider the permutation $\sigma_n^g: T_n \to T_n$ given by
$$
\sigma_n^g(h) = 
\begin{cases}
gh		&	\text{ if } h \in T_n \cap g^{-1} T_n,		\\
b_n(h)	&	\text{ if } h \in T_n \setminus g^{-1}T_n,
\end{cases}
$$
where $b_n: T_n \backslash g^{-1} T_n \rightarrow g T_n \backslash T_n$ is an arbitrary bijection. Then, the sequence $\Sigma = \left\{\sigma_n: \Gamma \to \Sym\left(T_n\right)\right\}_n$ is a sofic approximation to $\Gamma$, since $\lim_n |T_n|^{-1}|g T_n \setminus T_n| = 0$.

\item {\bf Residually finite groups:} If a group $\Gamma$ is residually finite, there exists a sequence $\left\{N_{n}\right\}_n$ of finite index normal subgroups such that $\bigcap_{i=1}^{\infty} \bigcup_{n=i}^{\infty} N_n = \{1_\Gamma\},$ i.e., every nontrivial group element is contained in only finitely many $N_n$. For each $n$ we define a group homomorphism $\sigma_{n}: \Gamma \to \Sym\left(\Gamma / N_{n}\right)$ by setting $\sigma_{n}^g\left(hN_{n}\right) = ghN_{n}$ for all $g, h \in \Gamma$. Then, for every $g_1, g_2 \in \Gamma$ and for every $n$, it holds that $\sigma_n^{g_1}(\sigma_n^{g_2}(hN_n))=\sigma_n^{g_1g_2} (hN_n)$ for all $g_1,g_2,h \in \Gamma$, and for every $g \in \Gamma \setminus \{1_\Gamma\}$ and for every sufficiently large $n$, it holds that $\sigma_n^g(hN_n) \neq hN_n$ for all $h \in \Gamma$. Thus, $\Sigma = \{\sigma_n: \Gamma \to \Sym\left(\Gamma / N_{n}\right)\}_n$ is a sofic approximation to $\Gamma$.
\end{itemize}
}
\end{example}

We will sometimes use the following nomenclature. If $\{\Omega_n\}_n$ is a sequence of finite sets and $\mathrm{P}$ is a property that holds for some elements of $\Omega_{n}$ for each $n$, then $\mathrm{P}$ holds {\bf with high probability (w.h.p.) in $\omega \in \Omega_n$} if
$$
\lim_n |\Omega_n|^{-1} \left\{\omega \in \Omega_{n}: P \text { holds for } \omega\right\} = 1.
$$

Then, a sofic approximation corresponds to a sequence $\Sigma=\left\{\sigma_{n}: \Gamma \rightarrow \Sym\left(V_{n}\right)\right\}_n$ such that for all $g, h \in \Gamma$,
$$
\sigma_{n}^{g}(\sigma_{n}^{h}(v)) = \sigma_{n}^{g h}(v) \quad \text { w.h.p. in } v \in V_n
$$
and, for all $g \in \Gamma \backslash\left\{1_{\Gamma}\right\}$,
$$
\sigma_{n}^{g}(v) \neq v  \quad	\text { w.h.p. in } v \in V_n.
$$

Given $F \subseteq \Gamma$, we denote $\sigma_n^F(v) := \{\sigma_n^g(v): g \in F\}$. We will say that $v \in V_n$ is {\bf $F$-good} for $F \Subset \Gamma$ if
\begin{enumerate}
\item $\sigma_n^g(v) \neq \sigma_n^h(v)$ for every $g,h \in F$ such that $g \neq h$;
\item $\sigma_n^g(\sigma_n^h(u))=\sigma_n^{gh}(u)$ for every $g, h \in F$ and $u \in \sigma_n^F(v)$;
\item $\sigma_n^{g^{-1}}(\sigma_n^g (u)) = u$ for every $u \in \sigma_n^F(v)$; and,
\item if $u \in \sigma_n^F(v)$, $w \in V_n$, and $g \in F$ are such that $u = \sigma_n^g(w)$, then $w = \sigma_n^{g^{-1}}(u)$.
\end{enumerate}

We denote by $V^F_n$ the subset of $F$-good vertices in $V_n$ and by $V^r_n$ the set of $F_r$-good vertices. It is not difficult to see that, for any $F \Subset \Gamma$, it holds that $\lim_n \frac{|V_n^F|}{|V_n|}=1$ (see \cite[Lemma 2.2]{alpeev2017random}).

\subsubsection{The finitely generated case}

If $\Gamma$ is finitely generated and $S \Subset \Gamma$ is a symmetric generating set, we define the {\bf (left) Cayley structure} of $\Gamma$ with respect to $S$ as
$$
\Cay(\Gamma, S) = \left<\Gamma;\{R_{s}(\Gamma)\}_{s \in S}\right>,
$$
with $R_s(\Gamma) = \{(g,sg): g \in \Gamma\}$ for $s \in S$. We see $\Cay(\Gamma, S)$ as a regular directed graph with edge labels $s \in S$. This induces the usual word metric $|\cdot|_S$ in $G(\Gamma, S)$ and, given $r \in \N$, we can define the {\bf $r$-ball centered at $g$} as $B(g, r) = \{h \in \Gamma: |h^{-1}g|_S \leq r\}$.

In this context, a sofic approximation $\Sigma = \{\sigma_n: \Gamma \to \Sym(V_n)\}_n$ induces a sequence of finite $S$-edge-labeled directed graphs $\{G_n\}_n$, where $G_n$ has vertex set $V_n$ and edge set $E_n = \{(v,\sigma^s(v)): v \in V_n, s \in S, \sigma^s(v) \neq v\}$. Notice that for $F \Subset \Gamma$ and $v \in V_n^F$, we have that $\Cay(\Gamma,S)[F]$ is isomorphic to $G_n[\sigma^F(v)]$ ---denoted by $\Cay(\Gamma,S)[F] \cong G_n[\sigma^F(v)]$---, where $G_n[U]$ denotes the graph induced by a subset of vertices $U$ and we require that the isomorphism maps $v$ to $1_\Gamma$, preserves edge directions, and preserves labels.

We sometimes abbreviate $B(1_G,r)$ by $B_r$. Notice that $\{B_r\}_r$ is an exhaustion of $\Gamma$ and that $V_n^{r} = \{v \in V_n: B(1_\Gamma,r) \cong B(v,r)\}$, where $B(v,r)$ corresponds to the $r$-neighborhood of $v$ in $G_n$. We say that a sequence $\left\{G_{n}\right\}_n$ of finite $S$-edge-labeled directed graphs {\bf Benjamini-Schramm converges} to $\Cay(\Gamma, S)$ if for all $r \in \N$ and $\delta>0$, there exists $n_0 \in \N$ such that $|V^r_n| \geq (1-\delta) |V_n|$ for all $n \geq n_0$. In other words, with high probability, the $r$-neighborhood of a vertex uniformly chosen at random in $V$ looks the same as the $r$-neighborhood of the identity in the Cayley diagram $\Cay(\Gamma, S)$ (e.g., see \cite{borgs2013left}).

Then, in the finitely generated case, a sofic approximation is equivalent to the existence of a sequence of $S$-edge-labeled directed graphs Benjamini-Schramm converging to $\Cay(\Gamma, S)$. Moreover, $|S|$ is an upper bound for the maximum out-degree of $G_n$ and, by maybe removing some edges and a negligible number of vertices, we can also assume that both the in-degrees and out-degrees are uniformly bounded in $n$ by $|S|$.

%Notice that the sequence $\{G_n\}_n$ can be directly constructed from : consider the vertex set $V_n$ and edges $E_n = \{(v,\sigma^s(v)): v \in V_n, s \in S, \sigma^s(v) \neq v\}$. 

\subsection{Sofic pressure}

Now, assume $\Gamma$ is a sofic group, $\Sigma = \{\sigma_n: \Gamma \to \Sym(V_n)\}_n$ is a sofic approximation to $\Gamma$, and let $(X,\rho)$ be a compact metric space such that $\Gamma$ acts on $X$ by homeomorphisms, i.e., there is a collection $T=\left\{T^{g}\right\}_{g \in \Gamma}$ of homeomorphisms $T^{g}: X \rightarrow X$ satisfying $T^{g h}=T^{g} T^{h}$ and $T^{g^{-1}}=\left(T^{g}\right)^{-1}$. We denote such fact by $\Gamma \acts^T X$ or simply by $\Gamma \acts X$ if $T$ is understood.

Given $\epsilon > 0$, a subset $Y \subseteq X$ is said to be {\bf $(\rho, \epsilon)$-separated} if $\rho(x, y) \geq \epsilon$ for all $x, y \in Y$ with $x \neq y$ and we denote by $N_{\epsilon}(Y, \rho)$ the maximum cardinality of a $(\rho, \epsilon)$-separated subset of $X$. We let $\rho_{2}$ and $\rho_{\infty}$ denote the metrics on $X^{V_n}$ defined by
$$
\rho_{\infty}(\mx, \my) := \max_{v} \rho\left(x_{v}, y_{v}\right) \quad \text{ and } \quad \rho_{2}(\mx, \my) := \left(\frac{1}{|V|} \sum_{v} \rho\left(x_{v}, y_{v}\right)^{2}\right)^{1/2},
$$
where $\mx = \left(x_{1}, \ldots, x_{|V|}\right)$ and $\my =\left(y_{1}, \ldots, y_{|V_n|}\right)$ belong to $X^{V_n}$. Given $F \Subset \Gamma$, and $\delta>0,$ we define
$$
\Map(T, \rho, F, \delta, \sigma_n) := \{\mx \in X^{V_n}: \rho_{2}\left(T^{g} \mx, \mx \circ \sigma_n(g)\right)<\delta \text{ for all } g \in F\},
$$
where $\left(T^{g} \mx\right)_{v} = T^{g} x_{v}$ and $(\mx \circ \sigma_n(g))_{v} = x_{\sigma_n^g(v)}$ for $v \in V_n$. An element $\mx \in \Map(T, \rho, F, \delta, \sigma_n)$ is called a {\bf microstate}. Then, the {\bf topological sofic entropy} of $T$ with respect to $\rho$ and $\Sigma$ is defined as
$$
h_{\Sigma}(\Gamma \acts^T X) := \sup_{\epsilon>0} \inf_{F \Subset \Gamma} \inf_{\delta>0} \limsup_{n \to \infty}\left|V_n\right|^{-1} \log N_{\epsilon}\left(\Map\left(T, \rho, F, \delta, \sigma_{n}\right), \rho_{\infty}\right),
$$
which is a way of measuring the exponential growth rate of the number of approximate partial orbits that can be distinguished up to some small scale.

Now consider a continuous function $\phi: X \to \R$ that we call {\bf potential} and denote by $\|\phi\|$ the supremum norm of $\phi$. Given a microstate $\mx: V_n \to X$, we define its {\bf energy} by
$$
\En_{\sigma_n}(\mx) := \sum_{v \in V_n} \phi(x_v).
$$

For any finite collection of microstates $\mathcal{Z} \subseteq X^{V_n}$, we define the associated {\bf partition function} as
$$
Z_{\sigma_n}(\mathcal{Z}) := \sum_{\mx \in \mathcal{Z}} \exp (\En_{\sigma_n}(\mx)).
$$

Then, given $F \Subset \Gamma$ and $\delta, \epsilon>0$, we let
$$
Z_{\epsilon}(\phi,T, \rho, F, \delta, \sigma_n) := \sup\{Z_{\sigma_n}(\mathcal{Z}): \mathcal{Z} \subseteq \Map(T, \rho, F, \delta, \sigma_n) \text{ is $\left(\rho_{\infty}, \epsilon\right)$-separated}\}.
$$
and define the {\bf topological sofic pressure} of $T$ and $\phi$ with respect to $\rho$ and $\Sigma$ as
$$
p_{\Sigma}(\Gamma \acts^T X , \phi) := \sup_{\epsilon>0} \inf_{F \Subset \Gamma} \inf_{\delta>0} \limsup _{n \to \infty}\left|V_{n}\right|^{-1} \log Z_{\epsilon}\left(\phi,T, \rho, F, \delta, \sigma_{n}\right).
$$

Notice that, if $\phi \equiv 0$, then topological sofic pressure coincides with topological sofic entropy. Moreover, if $\Gamma$ is amenable, then this definition of topological sofic pressure coincides with the more usual one in terms of F{\o}lner sequences (see \cite[Theorem 1.1]{1-chung}).

A {\bf pseudometric} on $X$ is a function $\rho: X \times X \rightarrow[0, \infty)$ that is symmetric and satisfies the triangular inequality. A pseudometric $\rho$ on $X$ is said to be {\bf generating} for $\Gamma \acts^T X$ if for every $x, y \in X$ such that $x \neq y$, there exists $g \in \Gamma$ with $\rho(T^g x, T^g y) > 0$. It is known that topological sofic entropy and topological sofic pressure can be defined in terms of any generating pseudometric and the value is independent of the choice of it. Moreover, $\rho_{\infty}$ can be replaced by $\rho_{2}$ in the definition of entropy and pressure (e.g., see \cite[Proposition 10.23]{1-kerr}).

\subsubsection{Variational principle}

We denote by $\B$ the Borel $\sigma$-algebra and by $\Prob(X)$ the space of Borel probability measures on $X$. Given $\nu \in \Prob(X)$, we say that an action $T$ on $(X, \nu)$ is {\bf $\Gamma$-invariant} if each $T^{g}$ preserves $\nu$. We denote such an action by $\Gamma \acts^T (X, \nu)$ and by $\Prob(X,\Gamma)$ the subspace of $\Gamma$-invariant Borel probability measures on $X$.
 
Given a sequence $\{\nu_n\}_n$ in $\Prob(X)$, we say that it {\bf weak* converges} to a measure $\nu$ if, for every $f \in \mathcal{C}(X)$,
$$
\int f d \nu_{n} \to \int f d \nu	\quad	\text{ as } n \to \infty.
$$

This induces the so-called {\bf weak* topology} on $\Prob(X)$ and, by the Banach-Alaoglu Theorem, $\Prob(X)$ turns out to be compact when endowed with such topology. Given an open neighborhood $\open \subseteq \Prob(X)$, we define
$$
\Map(T, \rho, \open, F, \delta, \sigma_n) := \{\mx \in \mathrm{Map}(T, \rho, F, \delta, \sigma_n): |V_n|^{-1} \sum_{v \in V_n} \delta_{x_v} \in \open\},
$$ 
where $\delta_{x_v}$ is the Dirac delta supported at $x_v$. The set $\Map(T, \rho, \open, F, \delta, \sigma_n)$ corresponds to the microstates that are approximately equidistributed. Then, the {\bf sofic entropy} of $T$ and $\nu$ with respect to $\rho$ and $\Sigma$ is
$$
h_{\Sigma}(\Gamma \acts^T (X,\nu)) := \sup_{\epsilon>0} \inf_{\open \ni \nu} \inf_{F \Subset \Gamma} \inf_{\delta>0} \limsup_{n \rightarrow \infty} \left|V_n\right|^{-1} \log \left(N_{\epsilon}\left(\Map\left(T, \rho, \open, F, \delta, \sigma_{n}\right), \rho_{\infty}\right)\right).
$$

As in the topological case, it is known that $h_{\Sigma}(\Gamma \acts^T (X,\nu))$ coincides for any generating pseudometric $\rho$ (see \cite{1-kerr}). In \cite{1-chung}, Chung proved the following variational principle for pressure.

\begin{theorem}
Let $\Gamma \acts^T X$ be a continuous action on a compact metrizable space and $\phi: X \to \R$ a potential. Then, for every sofic approximation $\Sigma$,
$$
p_{\Sigma}(\Gamma \acts^T X, \phi) = \sup_{\nu \in \Prob(X,\Gamma)} \left\{h_{\Sigma}(\Gamma \acts^T (X,\nu)) + \int \phi d\nu\right\}.
$$
\end{theorem}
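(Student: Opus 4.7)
The plan is to prove the two inequalities whose conjunction is the variational identity: $(\geq)$ $p_\Sigma(\Gamma \acts^T X, \phi) \geq h_\Sigma(\Gamma \acts^T (X,\nu)) + \int \phi \, d\nu$ for each $\nu \in \Prob(X,\Gamma)$, and $(\leq)$ the reverse with $\sup_{\nu \in \Prob(X,\Gamma)}$ on the right. The former is obtained by restricting attention to microstates that are approximately $\nu$-equidistributed, while the latter relies on covering the compact set $\Prob(X,\Gamma)$ by finitely many small weak* neighborhoods together with a pigeonhole argument.

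For $(\geq)$, fix $\nu \in \Prob(X,\Gamma)$, $\eta > 0$, $\epsilon > 0$, $F \Subset \Gamma$, and $\delta > 0$. By weak* continuity of $\nu' \mapsto \int \phi \, d\nu'$, choose an open neighborhood $\open \ni \nu$ small enough that $|\!\int \phi \, d\nu' - \!\int \phi \, d\nu| < \eta$ whenever $\nu' \in \open$. For any $\mx \in \Map(T,\rho,\open,F,\delta,\sigma_n)$ the empirical distribution $|V_n|^{-1}\sum_v \delta_{x_v}$ lies in $\open$, hence $\En_{\sigma_n}(\mx) \geq |V_n|(\int \phi \, d\nu - \eta)$. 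A maximal $(\rho_\infty,\epsilon)$-separated subset $Y_n$ of $\Map(T,\rho,\open,F,\delta,\sigma_n)$ is still $(\rho_\infty,\epsilon)$-separated inside $\Map(T,\rho,F,\delta,\sigma_n)$, so
$$Z_\epsilon(\phi,T,\rho,F,\delta,\sigma_n) \geq Z_{\sigma_n}(Y_n) \geq N_\epsilon\bigl(\Map(T,\rho,\open,F,\delta,\sigma_n),\rho_\infty\bigr) \cdot e^{|V_n|(\int \phi \, d\nu - \eta)}.$$
Taking $|V_n|^{-1}\log$, $\limsup_n$, then $\inf$ over $\open, F, \delta$ and $\sup$ over $\epsilon$ yields $p_\Sigma(\phi) \geq h_\Sigma(\nu) + \int \phi \, d\nu - \eta$; letting $\eta \to 0$ and taking $\sup_\nu$ gives $(\geq)$.

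For $(\leq)$, the first step is to show that empirical distributions of microstates are approximately $\Gamma$-invariant. Since $\sigma_n^g$ is a bijection, the empirical measure of $\mx \circ \sigma_n(g)$ coincides with that of $\mx$, and combining $\rho_2(T^g \mx, \mx\circ \sigma_n(g)) < \delta$ with Jensen's inequality and the uniform continuity of any continuous test function on the compact space $X$ forces $T^g_*$ of the empirical distribution to be weak*-close to itself. Hence for any weak* open $U \supseteq \Prob(X,\Gamma)$ there exist $F_U \Subset \Gamma$ and $\delta_U > 0$ such that the empirical distribution of every $\mx \in \Map(T,\rho,F,\delta,\sigma_n)$ lies in $U$ whenever $F \supseteq F_U$, $\delta \leq \delta_U$, and $n$ is large. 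Now fix $\epsilon, \eta > 0$ and cover $\Prob(X,\Gamma)$ by finitely many weak* open sets $\open_1,\ldots,\open_K$ centered at $\nu_1,\ldots,\nu_K \in \Prob(X,\Gamma)$, of diameter so small that $|\!\int \phi \, d\nu' - \!\int \phi \, d\nu_i| < \eta$ on $\open_i$. For $F, \delta$ as above, pick a $(\rho_\infty,\epsilon)$-separated $\mathcal{Z}_n \subseteq \Map(T,\rho,F,\delta,\sigma_n)$ with $Z_{\sigma_n}(\mathcal{Z}_n) \geq \tfrac12 Z_\epsilon(\phi,T,\rho,F,\delta,\sigma_n)$, split $\mathcal{Z}_n = \bigsqcup_i \mathcal{Z}_n^i$ by which $\open_i$ contains the empirical distribution, and apply pigeonhole to find $i(n)$ with $Z_{\sigma_n}(\mathcal{Z}_n^{i(n)}) \geq K^{-1} Z_{\sigma_n}(\mathcal{Z}_n)$. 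The energy bound $\En_{\sigma_n} \leq |V_n|(\int \phi \, d\nu_{i(n)} + \eta)$ on $\mathcal{Z}_n^{i(n)}$ together with $|\mathcal{Z}_n^{i(n)}| \leq N_\epsilon(\Map(T,\rho,\open_{i(n)},F,\delta,\sigma_n),\rho_\infty)$, after passing to a subsequence along which $i(n)$ stabilizes at some $i^*$, yield
$$\limsup_n |V_n|^{-1}\log Z_\epsilon(\phi,T,\rho,F,\delta,\sigma_n) \leq \limsup_n |V_n|^{-1}\log N_\epsilon\bigl(\Map(T,\rho,\open_{i^*},F,\delta,\sigma_n),\rho_\infty\bigr) + \int \phi \, d\nu_{i^*} + \eta.$$
Varying $F, \delta$, iteratively refining the cover so its diameters tend to $0$, and extracting by compactness of $\Prob(X,\Gamma)$ a convergent subsequence $\nu_{i^*} \to \nu^*$, one obtains an invariant $\nu^*$ with $p_\Sigma(\phi) \leq h_\Sigma(\nu^*) + \int \phi \, d\nu^* + \eta$; sending $\eta \to 0$ finishes the argument.

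The main obstacle is the diagonal argument closing $(\leq)$: for any \emph{fixed} finite cover, the resulting bound involves $\inf_F \inf_\delta \limsup_n |V_n|^{-1}\log N_\epsilon(\Map(T,\rho,\open_i,F,\delta,\sigma_n),\rho_\infty)$ with $\open_i$ \emph{fixed}, whereas $h_\Sigma(\nu_i)$ also takes an infimum over open $\open \ni \nu_i$. Bridging this gap requires successively shrinking the covering neighborhoods along a sequence of refinements and then using the compactness of $\Prob(X,\Gamma)$ to extract an invariant accumulation point $\nu^*$ whose open neighborhoods eventually contain every chosen covering element, so that the fixed-$\open$ bound relaxes to a genuine $h_\Sigma(\nu^*)$ bound.
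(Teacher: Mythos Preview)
The paper does not prove this theorem; it states it and attributes it to Chung \cite{1-chung}. So there is no ``paper's own proof'' to compare against, and your proposal must be judged against Chung's argument (or on its own merits).

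Your inequality $(\geq)$ is correct and is exactly the easy direction in Chung's proof. Your strategy for $(\leq)$ --- approximate invariance of empirical distributions, then a covering of $\Prob(X,\Gamma)$ and a pigeonhole --- is also the right one, and is what Chung does. The difference is in organization, and your organization creates the very obstacle you flag at the end. You fix a small-diameter cover $\{\open_i\}$ \emph{first}, then choose $F,\delta$, and only afterwards try to pass to the $\inf_\open$ that sits inside the definition of $h_\Sigma(\nu)$. This forces the iterative refinement and diagonal extraction that you describe, and while it can be made to work (one needs the sequence $(F_k,\delta_k)$ to be cofinal and to use monotonicity of $N_\epsilon(\Map(\open,F,\delta,\sigma_n))$ in $(F,\delta)$), it is heavier than necessary.

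Chung's cleaner ordering eliminates the obstacle entirely: assume $p_\Sigma(\phi) > \lambda > \sup_\nu\{h_\Sigma(\nu) + \int\phi\,d\nu\}$. For each $\nu \in \Prob(X,\Gamma)$, since $h_\Sigma(\nu) < \lambda - \int\phi\,d\nu$, the $\inf_\open$ in the definition of $h_\Sigma(\nu)$ already furnishes an open $\open_\nu \ni \nu$ together with $F_\nu,\delta_\nu$ such that $\limsup_n |V_n|^{-1}\log N_\epsilon(\Map(T,\rho,\open_\nu,F_\nu,\delta_\nu,\sigma_n)) < \lambda - \int\phi\,d\nu$. Shrink $\open_\nu$ if needed so that $\int\phi\,d(\cdot)$ varies by at most $\eta$ on it. Compactness gives a finite subcover $\open_{\nu_1},\ldots,\open_{\nu_K}$; set $F = \bigcup_i F_{\nu_i}$, $\delta = \min_i \delta_{\nu_i}$ (further enlarging $F$ and shrinking $\delta$ for the approximate-invariance step). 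Then any $(\rho_\infty,\epsilon)$-separated $\mathcal{Z} \subseteq \Map(T,\rho,F,\delta,\sigma_n)$ splits as $\bigcup_i \mathcal{Z}^i$, and since $\Map(\open_{\nu_i},F,\delta,\sigma_n) \subseteq \Map(\open_{\nu_i},F_{\nu_i},\delta_{\nu_i},\sigma_n)$ by monotonicity, each $|\mathcal{Z}^i|$ is already controlled by the pre-chosen bound. One obtains $\limsup_n |V_n|^{-1}\log Z_\epsilon(\phi,T,\rho,F,\delta,\sigma_n) \leq \lambda + \eta$ directly, with no refinement or limit extraction. The key point is to let the $\inf_\open$ in $h_\Sigma(\nu)$ dictate the cover, rather than imposing a cover first and then chasing the infimum afterwards.
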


A measure $\mu \in \Prob(X,\Gamma)$ is called an {\bf equilibrium state} for $(\Gamma \acts^T X, \phi)$ if it realizes the supremum in the variational principle, i.e.,
$$
p_{\Sigma}(\Gamma \acts^T X, \phi) = h_{\Sigma}(\Gamma \acts^T (X,\mu)) + \int \phi d\mu.
$$

It is known \cite{2-chung}, that if the action $\Gamma \acts^T X$ is expansive, then the entropy is upper semi-continuous in the measure $\nu$, and therefore there exists an equilibrium state.
%\info{Question: Still true for sofic groups? Ver survey de Bowen, ahí sale. [Bow 08]}

%\begin{example}\info{Decidir si incluyo el ejemplo o no}
%Suppose $A$ is a finite set and $\Psi: A^{\Gamma} \rightarrow \mathbb{R}$ is a finite range potential that depends only on the time $1_\Gamma$ coordinate:
%\[
%\phi(\omega)=\phi_{0}\left(\omega_{1_\Gamma}\right)
%\]
%for some function $\phi_{0}: A \rightarrow \mathbb{R}$. Since $\Gamma \acts A^{\Gamma}$ is expansive, there exists an equilibrium state $\mu$ for the pair $\left(\Gamma \acts A^{\Gamma}, \Psi\right)$. Let $\kappa$ be the $1$-dimensional marginal on $A$:
%\[
%\kappa\left(\left\{k_{0}\right\}\right):=\mu\left(\left\{\omega \in A^{\Gamma}: \omega_{1_\Gamma} = k_{0}\right\}\right) \quad \forall k_{0} \in A.
%\]
%
%We claim that $\kappa^{\Gamma}=\mu$. Because $\phi$ depends only on the time $1_\Gamma$ coordinate, $\kappa^{\Gamma}(\phi)=\mu(\phi)$. So it suffices to show that $\kappa^{\Gamma}$ uniquely maximizes entropy over all invariant measures with the given $1$-dimensional marginal. \textcolor{blue}{This follows from Seward's Theorem 2.15.}
%\end{example}

\subsection{Subshifts}

Given a finite set $A$, consider the space $A^\Gamma$ endowed with the product topology, that we call the {\bf full shift}, and $T$, the right-shift action of $\Gamma$ on $A^\Gamma$ given by
$$
T^{g}((a_{h})_{h \in \Gamma}) := (a_{h g})_{h \in \Gamma},
$$
where $x = (a_h)_{h \in \Gamma} \in A^\Gamma$. Notice that $T$ is expansive. A closed and shift-invariant subset $X \subseteq A^{\Gamma}$ is called a {\bf subshift}. From now on, we will suppose that $X$ is a subshift and $\{F_r\}_r$ is a fixed exhaustion of $\Gamma$. Let $\rho$ be the generating pseudometric given by
$$
\rho(x,y) =
\begin{cases}
1	&	\text{ if } x(1_\Gamma) \neq y(1_\Gamma),	\\
0	&	\text{ otherwise.}
\end{cases}
$$

Then, an easy calculation shows that $\rho_2\left(T^g \mx, \mx \circ \sigma_n(g)\right)^2 = |V_n|^{-1} |\{v \in V_n: x_v(g) \neq x_{\sigma_n^g(v)}(1_\Gamma)\}|$, so
$$
\Map(T, \rho, F, \delta, \sigma_n)	=	\{\mx \in X^{V_n}: |\{v \in V_n: x_v(g) = x_{\sigma_n^g(v)}(1_\Gamma)\}|	>	(1-\delta^{1/2})|V_n| \text{ for all } g \in F\}.
$$

Define the set
\begin{align*}
\Map(r, \delta, \sigma_n)	&	:=	\{\mx \in X^{V_n}: |\{v \in V_n: x_v(g) = x_{\sigma_n^g(v)}(1_\Gamma) \text{ for all } g \in F_r\}|	>	(1-\delta)|V_n|\}.
\end{align*}

Notice that $\Map(r, \delta^{1/2}, \sigma_n) \subseteq \Map(T, \rho, F_r, \delta, \sigma_n)$ and if $\mx \in \Map(T, \rho, F_r, \delta, \sigma_n)$, then
$$
|\{v \in V_n: x_v(g) = x_{\sigma^g(v)}(1_\Gamma) \text{ for all } g \in F_r\}|	 \geq (1-\delta|F_r|)|V_n|,
$$
so $\Map(r, \delta^{1/2}, \sigma_n) \subseteq \Map(T, \rho, F_r, \delta, \sigma_n) \subseteq \Map(r, \delta|F_r|, \sigma_n)$. In particular, this implies that, given a sofic approximation $\Sigma = \left\{\sigma_{n}: \Gamma \to \Sym(V_n)\right\}_n$ of $\Gamma$, we have that
\begin{align*}
h_{\Sigma}(\Gamma \acts  X)	&	= \sup_{\epsilon>0} \inf_{r, \delta>0} \limsup_{n \to \infty}\left|V_n\right|^{-1} \log N_{\epsilon}\left(\Map\left(r, \delta, \sigma_{n}\right), \rho_{\infty}\right)	\\
						&	= 	\inf_{r, \delta>0} \limsup_{n \to \infty}\left|V_n\right|^{-1} \log N_{1}\left(\Map\left(r, \delta, \sigma_{n}\right), \rho_{\infty}\right),
\end{align*}
where the last equality follows from the fact that $\rho_\infty$ only takes values $0$ and $1$, so we can assume that $\epsilon = 1$. Similarly, we can define
$$
Z(\phi, r, \delta, \sigma_n) :=  \sup\{Z_{\sigma_n}(\mathcal{Z}): \mathcal{Z} \subseteq \Map\left(r, \delta, \sigma_n\right) \text{ is $\left(\rho_{\infty}, 1\right)$-separated}\}
$$ 
to obtain that
$$
p_{\Sigma}(\Gamma \acts X, \phi) = \inf_{r,\delta > 0} \limsup_{n \to \infty}\left|V_{n}\right|^{-1} \log Z(\phi, r, \delta, \sigma_n).
$$

\section{Derived configuration spaces}
\label{sec3}

Given a sofic approximation $\Sigma = \{\sigma_n: \Gamma \to \Sym(V_n)\}_n$, we are interested in constructing finite models in $A^{V_n}$ that locally approximate $\Gamma \acts X$. For $F \subseteq \Gamma$ and $x \in A^\Gamma$, we denote by $x_F$ the restriction of $x$ to $F$. Considering this, we define $X_F = \{x_F: x \in X\}$ and $[x_F] = \{y \in X: y_F = x_F\}$. We say that $w \in A^F$ is {\bf globally admissible} if $[w] \neq \emptyset$. Given disjoint sets $E, F \subseteq \Gamma$, $w_E \in A^E$, and $w_F \in A^F$, we denote by $w_Ew_F$ the \emph{concatenation} of $w_E$ and $w_F$. Notice that $[(T^g x)_F] = T^g[x_{Fg}]$. 

\subsection{Pullback names}

Given $U \subseteq V_n$, we say that any $\fx = (a_v)_{v \in U} \in A^U$ is a {\bf partial configuration}. Given such partial configuration $\fx$, $F \subseteq \Gamma$, and $v \in V$, we define the {\bf $F$-pullback name of $\fx$ at $v$} as
$$
\pull_{v}^{\sigma_n,F}(\fx) :=  (a_{\sigma_n^g(v)})_{\{g \in F: \sigma_n^g(v) \in U\}}.
$$

If $U = V_n$ and $F = \Gamma$, we recover the usual notion of pullback name in the literature \cite{austin2018gibbs}. If $F = F_r$, we write $\pull_{v}^{\sigma_n,r}(\fx)$ instead. Notice that this defines a function $\pull^{\sigma_n}: A^{V_n} \to (A^\Gamma)^{V_n}$ given by
$$
\pull^{\sigma_n}(\fx) := (\pull_v^{\sigma_n}(\fx))_{v \in V_n}.
$$

We also introduce the map $\theta_{\sigma_n}: (A^\Gamma)^{V_n} \to A^{V_n}$ given by $\theta_{\sigma_n}(\mx)_v = x_v(1_\Gamma)$. Notice that if $\rho_\infty(\mx, \my) \geq 1$, then $\theta_{\sigma_n}(\mx) \neq \theta_{\sigma_n}(\my)$. Therefore, $\theta_{\sigma_n}$ is injective when restricted to any $(\rho_\infty,1)$-separated set.

\subsection{The topological strong spatial mixing property}

Given $M \Subset \Gamma$, we will say that a subshift $X$ has the {\bf topological strong spatial mixing property (TSSM) with range $M$} if for every $F \subseteq \Gamma$ and for every $w \in A^F$,
$$
w \in X_F	\iff	w_{F \cap M g} \in X_{F \cap M g}  \text{ for all } g \in F.
$$

We say that $X$ satisfies the {\bf  topological strong spatial mixing property (TSSM) property} if it has the TSSM property with range $M$ for some $M$. Without loss of generality, we will assume that $1_\Gamma \in M$ and $M = M^{-1}$.

\begin{remark}
Notice that the TSSM property, maybe after readjusting the range, is equivalent to the following a priori weaker condition:
$$
w \in X_F	\iff	w_{F \cap Mg} \in X_{F \cap Mg}  \text{ for all } g \in \Gamma.
$$

In other words, a subshift has the TSSM property if we can locally check the global admissibility of any pattern $w$. In particular, if a subshift satisfies the TSSM property, then it must be a \emph{subshift of finite type}, but the converse is not true (see \cite{1-briceno}). On the other hand, there are many well-known models supported on subshifts that satisfy the TSSM property, such as the full shift, independent sets, proper colorings with enough colors, among many others (e.g., see \cite{1-briceno,3-briceno,4-briceno}).
\end{remark}

\begin{lemma}
\label{lem:consist}
Suppose that $X$ satisfies the TSSM property with range $M$. Then, for every $F \subseteq \Gamma \setminus \{1_\Gamma\}$, $w \in A^F$, and $a \in A$, $wa^{1_\Gamma}$ is globally admissible if and only if $w$ and $w_{F \cap MM}a^{1_\Gamma}$ are globally admissible.
\end{lemma}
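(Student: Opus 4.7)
The forward direction is immediate: restrictions of globally admissible patterns are themselves globally admissible, so from the global admissibility of $wa^{1_\Gamma}$ one obtains both that of $w$ (by restricting to $F$) and that of $w_{F \cap MM}a^{1_\Gamma}$ (by restricting to $(F \cap MM) \cup \{1_\Gamma\}$).

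For the converse, my plan is to set $F' := F \cup \{1_\Gamma\}$ and $w' := wa^{1_\Gamma} \in A^{F'}$, and to verify the TSSM hypothesis for $w'$ relative to $F'$, namely that $w'_{F' \cap Mg} \in X_{F' \cap Mg}$ for every $g \in F'$; once this is established, the TSSM property with range $M$ yields $w' \in X_{F'}$, which is exactly the global admissibility we want. I would check this in three cases, using throughout that $1_\Gamma \in M$ and $M = M^{-1}$. For $g = 1_\Gamma$, one has $F' \cap M = (F \cap M) \cup \{1_\Gamma\}$, so $w'_{F' \cap M} = w_{F \cap M}a^{1_\Gamma}$, which is a restriction of the globally admissible pattern $w_{F \cap MM}a^{1_\Gamma}$ since $M \subseteq MM$. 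For $g \in F \cap M$, the assumption $g \in M^{-1}$ gives $1_\Gamma \in Mg$ and also $Mg \subseteq MM$, so $w'_{F' \cap Mg} = w_{F \cap Mg}a^{1_\Gamma}$ is again a restriction of $w_{F \cap MM}a^{1_\Gamma}$. For $g \in F \setminus M$, one has $1_\Gamma \notin Mg$, so $w'_{F' \cap Mg} = w_{F \cap Mg}$, which is a restriction of the globally admissible $w$.

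The only point requiring genuine attention is the case $g \in F \cap M$: the TSSM window $Mg$ centered at such a $g$ still reaches $1_\Gamma$, and this is \emph{precisely} why the extended range $MM$ appears in the hypothesis rather than merely $M$. Everything else amounts to a direct bookkeeping of the definitions, and no additional estimates are needed.
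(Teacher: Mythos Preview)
Your proof is correct and follows essentially the same approach as the paper's own argument: both verify the TSSM condition for $wa^{1_\Gamma}$ on $F' = F \cup \{1_\Gamma\}$ by splitting according to whether $1_\Gamma \in Mg$ (equivalently $g \in M$, by symmetry of $M$), reducing in one case to the assumed global admissibility of $w_{F \cap MM}a^{1_\Gamma}$ and in the other to that of $w$. Your explicit separation of the case $g = 1_\Gamma$ is a minor organizational choice---the paper absorbs it into the $1_\Gamma \in Mg$ case---but the underlying reasoning is identical.
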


\begin{proof}
Let's assume that $w$ and $w_{F \cap MM}a^{1_\Gamma}$ are globally admissible. By the TSSM property, it suffices to check that  $(wa^{1_\Gamma})_{F \cap Mg}$ is globally admissible for every $g \in F \cup \{1_\Gamma\}$ in order to conclude that $wa^{1_\Gamma}$ is globally admissible. If $1_\Gamma \notin Mg$, then the global admissibility of  $(wa^{1_\Gamma})_{F \cap Mg}$ reduces to the global admissibility of $w_{F \cap Mg}$. If $1_\Gamma \in Mg$, this means that $g^{-1} \in M$, which is equivalent to $g \in M$, since $M$ is symmetric. Therefore, since $F \cap Mg \subseteq F \cap MM$, the global admissibility of $(wa^{1_\Gamma})_{F \cap Mg}$ reduces to the global admissibility of $w_{F \cap MM}a^{1_\Gamma}$. The converse is direct.
\end{proof}

We define the {\bf $n$th derived configuration space} $X^n$ of $X$ as
$$
X^n = \{\fx \in A^{V_n}: \pull^{\sigma_n,MM}_v(\fx) \in X_M \text{ for all } v \in V_n^{MM}\},
$$
where $MM = \{gh: g,h \in M\}$ and $V_n^{MM}$ is the set of $MM$-good vertices. In simple words, $X^n$ is a configuration space that, locally, respects the constraints of $X$ wherever it makes sense to apply them, with a few exceptions, namely, the set $V_n^{M} \setminus V_n^{MM}$ which is negligible compared to $V_n$. 

Given a subset $U \subseteq V_n$, we define $\fx_U$, $X^n_U$, etc., exactly as for subshifts and say that $\fx_U$ is {\bf locally consistent} if
$$
[\pull_{v}^{\sigma_n,MM}(\fx_U)] \neq \emptyset \quad	\text{ for all } v \in V_n^{MM}.
$$

In addition, for $F \subseteq \Gamma$, we denote $\sigma_n^{F}(U) = \{\sigma^g(u): g \in F, u \in U\}$. We have the following key lemma.

\begin{lemma}
\label{lem:extend}
If $X$ satisfies the TSSM property with range $M$, then, for every $U \subseteq V_n$ and $\fx_U \in A^U$ that is locally consistent, there exists a locally consistent extension $\fx \in X^n$ of $\fx_U$.
\end{lemma}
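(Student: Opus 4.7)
The plan is to extend $\fx_U$ greedily, one vertex at a time, preserving local consistency throughout. Well-order $V_n \setminus U$ as $\{v_1, v_2, \ldots\}$, set $U_k := U \cup \{v_1,\ldots,v_k\}$, and inductively construct a locally consistent $\fx^{(k)} \in A^{U_k}$ starting from $\fx^{(0)} := \fx_U$. The final extension $\fx$ defined on all of $V_n$ will then belong to $X^n$.

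For the induction step, I have to exhibit $a \in A$ such that $\fx^{(k+1)} := \fx^{(k)} \cup \{v_{k+1} \mapsto a\}$ remains locally consistent. The only pullbacks that can be disturbed by the assignment are those at vertices $v \in V_n^{MM}$ for which $v_{k+1} \in \sigma_n^{MM}(v)$; for each such $v$, the $MM$-goodness singles out a unique $g_v \in MM$ with $\sigma_n^{g_v}(v) = v_{k+1}$, and the extended pullback takes the form $w_v \cdot a^{g_v}$, where $w_v := \pull_v^{\sigma_n,MM}(\fx^{(k)})$ is defined on $F_v := \{g \in MM : \sigma_n^g(v) \in U_k\}$ (and $g_v \notin F_v$).

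After translating $g_v$ to the identity by shifting, \lemref{lem:consist} reduces the global admissibility of $w_v \cdot a^{g_v}$ to two conditions: the global admissibility of $w_v$, which is the induction hypothesis, and the global admissibility of the smaller pattern $(w_v)_{F_v \cap MMg_v} \cdot a^{g_v}$. Exploiting the multiplicative property on $\sigma_n^{MM}(v)$, the entries of this smaller pattern are precisely the values $\fx^{(k)}(\sigma_n^h(v_{k+1}))$ for $h$ ranging over a subset of $MM$ determined only by the local configuration around $v_{k+1}$, \emph{not} by the particular witness $v$. Hence the local constraints on $a$ coming from different affected vertices glue together into a single constraint, namely that a fixed partial pattern $w^\ast$ around $v_{k+1}$ admit the extension $a$ at the center and be globally admissible.

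To produce such an $a$, pick any affected vertex $v_0$ (if there are none, the choice of $a$ is free): global admissibility of $w_{v_0}$ furnishes some $y \in X$ whose shift by $g_{v_0}^{-1}$ extends the translated pattern, and setting $a := y(1_\Gamma)$ makes $w_{v_0} \cdot a^{g_{v_0}}$ the restriction of a point of $X$ and hence globally admissible; by the coincidence argument, $w_v \cdot a^{g_v}$ is globally admissible for every affected $v$ simultaneously. The main obstacle in carrying this out carefully is exactly this coincidence step: one must use $MM$-goodness to check that the pattern fragments contributed by different $v$'s are mutually compatible and together assemble into a single well-defined partial configuration on $\sigma_n^{MM}(v_{k+1})$, which is the substantive place where the TSSM hypothesis (via \lemref{lem:consist}) is needed to localize global admissibility and thereby enable the greedy step.
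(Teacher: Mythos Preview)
Your overall strategy---greedy one-vertex extension invoking \lemref{lem:consist}---matches the paper's. The gap is in how you produce the symbol $a$. You propose to pick one affected vertex $v_0$, extend the globally admissible pattern $w_{v_0}$ to some $y\in X$, and read off $a$ from $y$ at position $g_{v_0}$. This guarantees that $w_{v_0}\cdot a^{g_{v_0}}$ is admissible, but it does \emph{not} guarantee that $(w_v)_{F_v\cap MMg_v}\cdot a^{g_v}$ is admissible for the other affected $v$: after the shift, the support $S_v=F_vg_v^{-1}\cap MM$ genuinely depends on $g_v$ (through the constraint $gg_v\in MM$), so the fragments $S_v$ are not all contained in $S_{v_0}$, and the point $y$ you chose need not agree with the assembled pattern $w^\ast$ outside $S_{v_0}$. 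Your claim that the subset is ``determined only by the local configuration around $v_{k+1}$'' is therefore not correct as stated; only the \emph{values} on the overlaps are so determined, not the supports.

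What is missing, and what the paper supplies, is a direct proof that the assembled pattern around $v_{k+1}$ (the paper works with the carefully chosen sub-pattern $w=\pull_{v_{k+1}}^{\sigma_n,MM}(\fx_{W_2})$, where $W_2=\sigma_n^{MM}(v_{k+1})\cap\sigma_n^M(W_1)\cap U_k$) is itself globally admissible. This uses TSSM in the contrapositive: if $w$ were not admissible, some $M$-window $w_{F\cap Mg}$ would fail, and tracing $\sigma_n^g(v_{k+1})$ back via $MM$-goodness to an affected vertex $u\in W_1\subseteq V_n^{MM}$ contradicts local consistency at $u$. Once $w$ is known to be globally admissible, one extends $w$ (not merely some $w_{v_0}$) to a point of $X$ and takes $a$ to be its value at $1_\Gamma$; then \lemref{lem:consist} handles all affected vertices simultaneously. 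Your final paragraph correctly flags this step as the crux, but frames the obstacle as one of \emph{compatibility} (the fragments assembling into a well-defined pattern), whereas the substantive work is establishing \emph{global admissibility} of the assembled pattern, and this is where TSSM is actually used beyond its appearance in \lemref{lem:consist}.
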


\begin{proof}
Consider an arbitrary vertex $v \in V_n \setminus U$. We aim to find a color $a \in A$ such that if we extend $\fx_U$ to $\fx_Ua^{v}$ by coloring $v$ with $a$, we preserve local consistency. Define the set $W_1 := \sigma^{MM}_n(v) \cap V^{MM}_n$. It is sufficient to check that $[\pull_{u}^{\sigma_n,MM}(\fx_Ua^{v})] \neq \emptyset$ for all $u \in W_1$, since the consistency around the remaining vertices is not affected by how we color $v$: if $u \notin V^{MM}_n$, then $\pull_{u}^{\sigma_n,MM}(\fx_Ua^{v})$ is allowed to be any pattern, and if $u \in V^{MM}_n$ but $u \notin \sigma^{MM}_n(v)$, then $v \notin \sigma^{MM}_n(u)$, since $u$ is $MM$-good and $M$ is symmetric. We can assume that $W_1 \neq \emptyset$; the empty case is trivial.

Now, consider the set $W_2 := \sigma^{MM}_n(v) \cap \sigma^M_n(W_1) \cap U$ and the pattern $w = \pull^{\sigma_n,MM}_v(\fx_{W_2})$, whose support is $F = \{g \in MM: \sigma^g_n(v) \in W_2\}$. Notice that $1_\Gamma \notin F$. We claim that $w$ is globally admissible in $X$. Indeed, if not, by the TSSM property, it would mean that there exists $g \in F$ such that $w_{F \cap Mg}$ is not globally admissible, so $[\pull^{\sigma_n,M}_{\sigma^g(v)}(\fx_{W_2})] = \emptyset$. Since $g \in F$, we have that $\sigma_n^g(v) \in W_2$. In particular, $\sigma_n^g(v) \in \sigma^M_n(W_1)$, so $\sigma_n^g(v) = \sigma_n^h(u)$ for some $u \in W_1$ and $h \in M$. Therefore, $[\pull^{\sigma_n,MM}_{u}(\fx_{W_2})] = \emptyset$, so $[\pull^{\sigma_n,MM}_{u}(\fx_{U})] = \emptyset$, which contradicts that $\fx_U$ is locally consistent, since $W_1 \subseteq  V^{MM}_n$.

Then, $w$ is globally admissible in $X$ and admits a one symbol extension at $1_\Gamma$ that preserves the global admissibility. Extend $\fx_U$ by coloring $v$ with such symbol, say $a$. Notice that $\fx_U a^{v}$ remains locally consistent. If not, it would exist $u \in W_1$ such that $[\pull_{u}^{\sigma_n,MM}(\fx_Ua^{v})] = \emptyset$. By Lemma \ref{lem:consist}, this is only possible if $[\pull_{u}^{\sigma_n,MM}(\fx_{U \cap W_2}a^{v})] = \emptyset$, but this contradicts our choice of $a$, since $[\pull_{v}^{\sigma_n,MM}(\fx_{W_2}a^{v})] \neq \emptyset$.

Iterating this procedure, we obtain an extension from $U$ to $V_n$.
\end{proof}

%\begin{lemma}
%\label{lem:extend}
%If $X$ satisfies the TSSM property with range $M$, then, for every $U \subseteq V_n^{MM}$ and $\fx_U \in A^U$ that is locally consistent, there exists a locally consistent extension $\fx$ of $\fx_U$ to $V_n^{MM}$.
%\end{lemma}
%
%\begin{proof}
%Consider an arbitrary vertex $v \in V_n^{MM} \setminus U$ and the pattern $w = \pull^{\sigma_n,MM}_v(\fx_U)$, whose support is $F = \{g \in MM: \sigma^g_n(v) \in U\}$. Notice that $1_\Gamma \notin F$. We claim that $w$ is globally admissible in $X$. If not, by the TSSM property, this means that there exist $g \in F$ such that $w_{F \cap Mg}$ is not globally admissible. Since $g \in F$, and therefore $\sigma_n^g(v) \in U \subseteq V_n^{MM}$, this implies that $[\pull^{\sigma_n,M}_{\sigma_n^g(v)}(\fx_U)] = \emptyset$, contradicting that $\fx_U$ is locally consistent. Then, $w$ is globally admissible in $X$ and admits a one symbol extension at $1_\Gamma$ that preserves the global admissibility. Extend $\fx$ by coloring $v$ with such symbol. Then, as in Lemma \ref{lem:consist}, it follows that $\fx$ remains locally consistent. Iterating this procedure, we obtain an extension to $V^{MM}_n$.
%\end{proof}

Given $\fx \in A^{V_n}$, we define its {\bf set of errors} as
$$
E(\fx) := \{v \in V_n^{MM}: \pull^{\sigma_n,MM}_v(\fx) \notin X_{MM}\}.
$$

We have the following very useful consequence of Lemma \ref{lem:extend}.

\begin{lemma}
\label{lem:error}
If $X$ satisfies the TSSM property with range $M$, then, for every $\fx \in A^{V_n}$, there exists $\fy \in X^n$ such that
$$
\fy_{V_n \setminus \sigma_n^{MM}(E(\fx))} = \fx_{V_n \setminus \sigma_n^{MM}(E(\fx))}.
$$
\end{lemma}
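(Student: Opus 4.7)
The plan is to reduce the lemma immediately to \lemref{lem:extend} via a judicious choice of the region to be retained. Set $U := V_n \setminus \sigma_n^{MM}(E(\fx))$ and consider the partial configuration $\fx_U$. If I can verify that $\fx_U$ is locally consistent, then \lemref{lem:extend} produces a locally consistent extension $\fy \in X^n$, which by construction satisfies $\fy_U = \fx_U$; that is, $\fy$ agrees with $\fx$ on $V_n \setminus \sigma_n^{MM}(E(\fx))$, exactly the required conclusion.

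Local consistency requires that for every $v \in V_n^{MM}$, one has $[\pull_v^{\sigma_n,MM}(\fx_U)] \neq \emptyset$. I would split the argument according to whether $v \in E(\fx)$ or not. If $v \notin E(\fx)$, then by the definition of $E(\fx)$ the full pullback $\pull_v^{\sigma_n,MM}(\fx)$ already lies in $X_{MM}$, and is therefore globally admissible. But $\pull_v^{\sigma_n,MM}(\fx_U)$ is just its restriction to the subset $\{g \in MM : \sigma_n^g(v) \in U\}$ of $MM$, and sub-patterns of globally admissible patterns remain globally admissible (simply extend globally and restrict further).

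If, on the other hand, $v \in E(\fx)$, the key observation is that $1_\Gamma \in M \subseteq MM$, so for every $g \in MM$ one has $\sigma_n^g(v) \in \sigma_n^{MM}(E(\fx))$, and hence $\sigma_n^g(v) \notin U$. Therefore $\pull_v^{\sigma_n,MM}(\fx_U)$ has empty support, and the empty pattern is trivially globally admissible.

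I do not anticipate a genuine obstacle here; the argument is essentially an observation once the correct removed region is identified. The one subtlety worth flagging is the choice of $\sigma_n^{MM}(E(\fx))$ rather than some smaller neighborhood such as $\sigma_n^M(E(\fx))$: it is precisely the $MM$-range appearing in the definitions of $X^n$ and of local consistency that dictates how large a buffer must be removed around errors, so that at every retained $v \in U \cap V_n^{MM}$ the entire $MM$-pullback is seen to be unaffected by the passage from $\fx$ to $\fx_U$.
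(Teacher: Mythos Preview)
Your proposal is correct and follows essentially the same approach as the paper: set $U = V_n \setminus \sigma_n^{MM}(E(\fx))$, verify that $\fx_U$ is locally consistent, and invoke \lemref{lem:extend}. Your case split according to whether $v \in E(\fx)$ is in fact more explicit than the paper's one-line justification, but the underlying idea is identical.
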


\begin{proof}
Consider the partial configuration $\fx_{V_n \setminus \sigma_n^{MM}(E(\fx))}$ and notice that it is locally consistent, since all the vertices $v$ in $V_n^{MM}$ such that $\sigma_n^{MM}(v) \in E(\fx)$ are contained in $\sigma_n^{MM}(E(\fx))$. By Lemma \ref{lem:extend}, it admits a locally consistent extension $\fy$ to $V_n$.
\end{proof}

In simple words, Lemma \ref{lem:error} guarantees that the ``errors'' in $\fx \in A^{V_n}$ can be ``corrected'' by replacing some portion of $\fx$ with size proportional to the error itself.

Given $M\Subset \Gamma$, we say that a potential $\phi: X \to \R$ has {\bf range $M$} if $\phi(x) = \phi(y)$ for all $x, y \in X$ with $x_M = y_M$ and that it has {\bf finite range} if it has range $M$ for some finite $M$. From now on, we will assume that $X$ has the TSSM property and $\phi$ has finite range and, without loss of generality, we will suppose that there exists $M \Subset \Gamma$ such that the range of $X$ and $\phi$ are both $M$. In addition, we can always suppose that $\phi$ is defined in all $A^\Gamma$, preserving boundedness and range. We define the {\bf $n$th derived energy} and the {\bf $n$th derived partition function} as
$$
\fEn_n(\fx) := \sum_{v \in V_n} \phi(\pull^{\sigma_n}_v(\fx))	\quad	\text{ and }	\quad	Z_n := \sum_{\fx \in X^n} \exp(\fEn_n(\fx)),
$$
respectively. In the following theorem, we prove that, provided the subshift $X$ satisfies the TSSM property, we can approximate the topological sofic pressure by means of the derived partition functions.

\begin{theorem}
\label{thm:partition}
Let $X$ be a subshift that satisfies the TSSM property and $\phi: X \to \R$ a finite range potential. Then,
$$
p_\Sigma(\Gamma \acts X ,\phi) = \limsup_n |V_n|^{-1} \log Z_{n},
$$
where $Z_n = \sum_{\fx \in X^n} \exp(\sum_{v \in V_n} \phi(\pull^{\sigma_n}_v(\fx)))$.
\end{theorem}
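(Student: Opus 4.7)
The strategy is to sandwich $Z_n$ with the quantities $Z(\phi, r, \delta, \sigma_n)$ that already characterize sofic pressure via the formula $p_\Sigma(\Gamma \acts X, \phi) = \inf_{r,\delta > 0} \limsup_n |V_n|^{-1} \log Z(\phi, r, \delta, \sigma_n)$ recorded in Section \ref{sec2}. Throughout I will restrict to $r$ with $F_r \supseteq MM$, so that $\phi$ is determined by $F_r$-windows, and let $\delta \to 0$ at the end.

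For the upper bound $\limsup_n |V_n|^{-1} \log Z_n \leq p_\Sigma$, I assign to each $\fx \in X^n$ a microstate $\mx \in X^{V_n}$ satisfying $\theta_{\sigma_n}(\mx) = \fx$, which automatically enforces $(\rho_\infty, 1)$-separation as $\fx$ varies. For $v$ in the high-probability set $V_n^{F_r \cdot MM}$ (whose complement has size $o(|V_n|)$), the pullback $\pull_v^{\sigma_n, F_r}(\fx)$ is globally admissible in $X$ by TSSM, because every required $M$-window $\pull_{\sigma_n^g(v)}^{\sigma_n, M}(\fx)$ lies in $X_M$ by virtue of $\fx \in X^n$ (translating good-vertex identities into exactly the hypothesis of the TSSM characterization). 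I extend this admissible pattern to some $\mx_v \in X$; at the remaining $o(|V_n|)$ vertices I pick $\mx_v \in X$ arbitrarily with $\mx_v(1_\Gamma) = \fx_v$. Then $\mx \in \Map(r, \delta, \sigma_n)$ for $n$ large, and the finite range of $\phi$ forces $|\En_{\sigma_n}(\mx) - \fEn_n(\fx)| = o(|V_n|)$. Summing gives $Z_n \leq e^{o(|V_n|)} Z(\phi, r, \delta, \sigma_n)$, and passing to $\limsup$ then the infimum over $(r,\delta)$ closes this direction.

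For the lower bound $p_\Sigma \leq \limsup_n |V_n|^{-1} \log Z_n$, take a maximizing $(\rho_\infty, 1)$-separated $\mathcal{Z} \subseteq \Map(r, \delta, \sigma_n)$. The projection $\mx \mapsto \theta_{\sigma_n}(\mx)$ is injective on $\mathcal{Z}$, and because $F_r \supseteq MM$, at every $v$ that is both microstate-good and $MM$-good one computes $\pull^{\sigma_n, MM}_v(\theta_{\sigma_n}(\mx)) = \mx_v|_{MM} \in X_{MM}$, forcing $|E(\theta_{\sigma_n}(\mx))| \leq (\delta + o(1))|V_n|$. Apply Lemma \ref{lem:error} to correct each $\fx = \theta_{\sigma_n}(\mx)$ into some $\fy \in X^n$ agreeing with $\fx$ off $\sigma_n^{MM}(E(\fx))$. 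Using standard entropy estimates on the number of configurations differing from $\fy$ on at most $|MM|(\delta + o(1))|V_n|$ sites, the fiber of this repair map has size at most $e^{g(\delta)|V_n|}$ with $g(\delta) \to 0$; and since $\phi$ has range $M$, the energy shift is bounded by $2\|\phi\| \cdot |MMM| \cdot (\delta + o(1))|V_n|$. Combining yields $Z(\phi, r, \delta, \sigma_n) \leq e^{h(\delta)|V_n|} Z_n$ with $h(\delta) \to 0$, and the conclusion follows by taking $\limsup$ and then $\delta \to 0$.

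The main obstacle is the combinatorial accounting in the lower bound: one has to bound simultaneously the fiber size of the error-correction map and the energy drift on the $MMM$-thickening of $E(\fx)$, ensuring both contributions vanish uniformly in $n$ as $\delta \to 0$. Lemma \ref{lem:error} collapses the structural part of this problem into a purely combinatorial one, so once the bookkeeping is in place the remaining estimates are routine; but the uniformity in $n$ is what the whole Benjamini-Schramm character of $\Sigma$ is ultimately being used for, and it should be stated carefully.
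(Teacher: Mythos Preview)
Your proposal is correct and follows essentially the same approach as the paper: both directions use $\theta_{\sigma_n}$ to pass between microstates and $A^{V_n}$-configurations, invoke the TSSM-based error correction of Lemma~\ref{lem:error}, and control the resulting energy drifts and combinatorial losses by Stirling-type estimates. The only organizational difference is that the paper routes the lower bound through the intermediate partition functions $Z_{n,\delta}$ (configurations with at most $\delta|V_n|$ errors) and first proves $\limsup_n |V_n|^{-1}\log Z_n = \inf_{\delta}\limsup_n |V_n|^{-1}\log Z_{n,\delta}$, whereas you compose the correction map directly and bound its fibers in one step; the two packagings amount to the same counting.
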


\begin{proof}
Given $\delta > 0$ and $E \subseteq V_n^{MM}$, we define the auxiliary sets
$$
X^{n,\delta} = \{\fx \in A^{V_n}: |E(\fx)| \leq \delta|V_n|\}	\quad	\text{ and }	\quad	X^{n,E} = \{\fx \in A^{V_n}: E(\fx) \subseteq E\},
$$
and the associated partition functions
$$
Z_{n,\delta} = \sum_{\fx \in X^{n,\delta}} \exp(\fEn_n(\fx))	\quad	\text{ and }	\quad	Z_{n,E} = \sum_{\fx \in X^{n,E}} \exp(\fEn_n(\fx)),
$$
respectively. By Lemma \ref{lem:error}, there is a natural surjection from $X^n \times A^{\sigma_n^{MM}(E)}$ to $X^{n,E}$, so
\begin{align*}
Z_{n,E}	&	= 	\sum_{\fx \in X^{n,E}} \exp(\fEn_n(\fx))	\\
		&	\leq	|A|^{|\sigma_n^{MM}(E)|}\sum_{\fy \in X^n} \exp(\fEn_n(\fy) + |\sigma_n^{MM}(E)|\|\phi\|)	\\
		&	\leq	\exp(|E||M|^2(\log|A| + \|\phi\|)) Z_n.
\end{align*}

Next, notice that
\begin{align*}
Z_{n,\delta}	&	\leq	\sum_{E \subseteq V_n: |E| \leq \delta|V_n|} Z_{n,E}									\\
			&	\leq	\sum_{E \subseteq V_n: |E| \leq \delta|V_n|} \exp(\delta|V_n||M|^2(\log|A| + \|\phi\|)) Z_n		\\
			&	\leq	{|V_n| \choose \delta|V_n|} \exp(\delta|V_n||M|^2(\log|A| + \|\phi\|)) Z_n,
\end{align*}
where we assume that $\delta|V_n|$ is an integer just to ease the notation. By Stirling's approximation for ${|V_n| \choose \delta|V_n|}$, we can write
%$$
%\log {|V_n| \choose \delta|V_n|} = |V_n|H(\delta, 1-\delta) - \frac{1}{2}\log (2 \pi |V_n| \delta(1-\delta))+O\left(|V_n|^{-1}\right),
%$$
$$
\log {|V_n| \choose \delta|V_n|} = |V_n|H(\delta, 1-\delta) + o(|V_n|),
$$
where $H(\delta, 1-\delta) = \delta \log \frac{1}{\delta}+(1-\delta) \log \frac{1}{1-\delta}$. Therefore,
$$
\limsup_n |V_n|^{-1} \log Z_n = \inf_{\delta > 0}\limsup_n |V_n|^{-1} \log Z_{n,\delta}.
$$

Now, define the auxiliary set
$$
\Map_{MM}(r, \delta, \sigma_n) := \{\mx \in X^{V_n}: |\{v \in V_n^{MM}: x_v(g) = x_{\sigma^g(v)}(1_\Gamma) \text{ for all } g \in F_r\}|	> (1-\delta)|V_n|\}.
$$
and
$$
Z_{MM}(\phi, r, \delta, \sigma_n) =  \sup\{Z_{\sigma_n}(\mathcal{Z}): \mathcal{Z} \subseteq \Map_{MM}\left(r, \delta, \sigma_n\right) \text{ is $\left(\rho_{\infty}, 1\right)$-separated}\}.
$$

Clearly, $\Map_{MM}(r, \delta, \sigma_n) \subseteq \Map(r, \delta, \sigma_n)$. Moreover, if $|V_n \setminus V^{MM}_n| \leq \delta |V_n|$, then $\Map(r, \delta, \sigma_n)  \subseteq \Map_{MM}(r, 2\delta, \sigma_n)$. Given $r\in \N$ such that $MM \subseteq F_r$ and $\delta > 0$, pick an arbitrary $(\rho_\infty,1)$-separated set $\mathcal{Z}  \subseteq \Map_{MM}(r, \delta, \sigma_n)$ and consider the map $\theta_{\sigma_n}$ restricted to $\mathcal{Z}$. Then, such map is injective and $\theta_{\sigma_n}(\mx) \in X^{n,\delta}$. In addition,
$$
|\En_n(\mx) - \fEn_n(\theta_n(\mx))|	\leq	2(|V_n \setminus V_n^{MM}| + |E(\theta_{\sigma_n}(\mx))|)\|\phi\|.
$$

In particular, if $|V_n \setminus V_n^{MM}| \leq \delta |V_n|$ and $\theta_{\sigma_n}(\mx) \in X^{n,\delta}$, then $|\En_n(\mx) - \fEn_n(\theta_{\sigma_n}(\mx))| \leq 4\delta|V_n|\|\phi\|$. Therefore,
\begin{align*}
Z_{\sigma_n}(\mathcal{Z})	&	=	\sum_{\mx \in \mathcal{Z}} \exp(\En_n(\mx))	\\
					&	\leq	\sum_{\mx \in \mathcal{Z}} \exp(\fEn_n(\theta_{\sigma_n}(\mx)) + 4\delta|V_n|\|\phi\|)	\\
					&	\leq	\exp(4\delta|V_n|\|\phi\|)\sum_{\fx \in X^{n,\delta}} \exp (\fEn_n(\fx))	\\
					&	=	\exp(4\delta|V_n|\|\phi\|)Z_{n,\delta}.
\end{align*}

Then, for sufficiently large $n$, $Z(\phi, r, \delta, \sigma_n) \leq \exp(2\delta|V_n|\|\phi\|)Z_{n,\delta}$, so $p_\Sigma(\Gamma \acts X ,\phi)  \leq \limsup_n |V_n|^{-1} \log Z_n$.

On the other hand, given $r \in \N$ and $\delta > 0$, we can pick $n_0$ so that $|V_n^{MMF_r}| \geq (1-\delta)|V_n|$ for every $n \geq n_0$. Given $\fx \in X^n$ and $v \in V_n^{MMF_r}$, notice that for every $u \in \sigma_n^{F_r}(v)$, we have that $u \in V_n^{MM}$. Since $\pull^{\sigma_n,M}_u(\fx) \in X_M$ for every $u \in \sigma_n^{F_r}(v)$, by the TSSM property, $\pull^{\sigma_n,r}_v(\fx)$ belongs to $X_{F_r}$. For each such $v$, pick $x_v \in [\pull^{\sigma,r}_v(\fx)]$ and construct $\mx$ by filling with any $x_v$ for $v \in V_n \setminus V_n^{MMF_r}$. Then, we can construct a $(\rho_\infty,1)$-separated set $\mathcal{Z}$ from $X^n$ such that $Z_n \leq Z_{\sigma_n}(\mathcal{Z})\exp(\delta|V_n|)$. Therefore, $p_\Sigma(\Gamma \acts X ,\phi) \geq \limsup_n |V_n|^{-1} \log Z_n$, and we conclude.
\end{proof}

We have the following corollary.

\begin{corollary}
\label{cor:nonnegative}
If $X$ satisfies the TSSM property and $\phi$ is a finite range potential, then $p_\Sigma(\Gamma \acts X, \phi) \neq -\infty$ for every sofic approximation $\Sigma$. Moreover, if $X$ is nontrivial, then $h_\Sigma(\Gamma \acts X) > 0$ for every sofic approximation $\Sigma$.
\end{corollary}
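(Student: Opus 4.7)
The plan is to invoke Theorem~\ref{thm:partition}, which rewrites $p_\Sigma(\Gamma \acts X, \phi)$ as $\limsup_n |V_n|^{-1} \log Z_n$ with $Z_n = \sum_{\fx \in X^n}\exp(\fEn_n(\fx))$, and then bound $Z_n$ from below. For the first assertion it suffices to show $X^n \neq \emptyset$ for all $n$, since the trivial bound $\fEn_n(\fx) \geq -\|\phi\||V_n|$ then yields $Z_n \geq \exp(-\|\phi\||V_n|)$ and hence $p_\Sigma(\Gamma \acts X, \phi) \geq -\|\phi\| > -\infty$. Nonemptiness of $X^n$ follows at once from Lemma~\ref{lem:extend} applied to the empty partial configuration, which is vacuously locally consistent as long as $X$ itself is nonempty (the empty pattern lies in $X_\emptyset$).

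For the second assertion I specialize to $\phi \equiv 0$, so that $Z_n = |X^n|$ and $h_\Sigma(\Gamma \acts X) = \limsup_n |V_n|^{-1} \log |X^n|$ by Theorem~\ref{thm:partition}. Nontriviality of $X$ combined with shift-invariance supplies two distinct symbols $a \neq b$ in $X_{\{1_\Gamma\}}$. The strategy is to exhibit a set $W_n \subseteq V_n^{MM}$ of linear size on which $\{a,b\}$-labelings can be chosen independently, each extending via Lemma~\ref{lem:extend} to a distinct element of $X^n$; this will give $|X^n| \geq 2^{|W_n|}$ and hence $h_\Sigma(\Gamma \acts X) > 0$.

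The construction of $W_n$ is a greedy packing argument. Starting from $V_n^{MM}$, which has size $(1-o(1))|V_n|$, I iteratively add a vertex $v$ to $W_n$ and discard every $u$ with $\sigma_n^{MM}(u) \cap \sigma_n^{MM}(v) \neq \emptyset$ from further consideration; a crude count bounds the number of vertices discarded at each step by $|M|^4$, yielding $|W_n| \geq c|V_n|$ with $c > 0$ depending only on $|M|$. The delicate point---and the only place requiring care---is verifying local consistency of an arbitrary labeling $\fx_{W_n} \in \{a,b\}^{W_n}$: for each $u \in V_n^{MM}$, the pairwise disjointness of the $\sigma_n^{MM}$-neighborhoods of the chosen vertices together with the $MM$-goodness of $u$ forces $\pull_u^{\sigma_n,MM}(\fx_{W_n})$ to have support of size at most one, so by shift-invariance of $X$ and the choice of $a,b$ it is globally admissible.

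The main obstacle is precisely this packing/independence step: one must ensure the separation between chosen vertices is large enough that the range-$MM$ constraints never couple two distinct elements of $W_n$ through a common $MM$-good neighbor, which is what forces the separation condition to be disjointness of full $\sigma_n^{MM}$-neighborhoods rather than merely of the $v$'s themselves. Once this is in place, Lemma~\ref{lem:extend} produces the desired $2^{c|V_n|}$ distinct configurations in $X^n$ and the corollary follows.
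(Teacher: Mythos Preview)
Your proposal is correct and follows essentially the same approach as the paper: both deduce $X^n \neq \emptyset$ from Lemma~\ref{lem:extend}, then build a separated subset of $V_n^{MM}$ of linear size on which vertices may be colored independently and extended via Lemma~\ref{lem:extend}. The only difference is in the separation criterion: the paper uses the weaker condition that distinct packed vertices are not in each other's $\sigma_n^M$-image, and then invokes TSSM to see that any coloring by symbols of $A$ (really $X_{\{1_\Gamma\}}$) is locally consistent, yielding the sharper bound $h_\Sigma(\Gamma \acts X) \geq \log|A|/|MM|$; your stronger separation (pairwise disjoint $\sigma_n^{MM}$-neighborhoods) buys a cleaner consistency check---pullbacks are singletons, so TSSM is not even needed at that step---at the cost of a smaller packing and the weaker bound $h_\Sigma(\Gamma \acts X) \geq (\log 2)/|M|^4$, which is of course sufficient for the corollary.
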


\begin{proof}
By Lemma \ref{lem:extend}, we have that $X^n \neq \emptyset$ and $Z_n = \sum_{\fx \in X^n} \exp(\fEn_n(\fx)) \geq |X^n|e^{-\|\phi\|}$. Let's say that $U_n \subseteq V_n$ is $M$-separated if $u,v \in U_n$ and $u \neq v$ implies that $u \notin \sigma^M(v)$ and $v \notin \sigma^M(u)$. Pick an $M$-separated set $U_n$ of vertices in $V_n^{MM}$  with maximum cardinality. Then, we can $A$-color such vertices independently to obtain locally consistent partial configurations and, by Lemma \ref{lem:extend}, we can extend each of such partial configurations to a configuration in $X^n$. Therefore, $|X^n| \geq |A|^{|U_n|}$. In addition, notice that $|U_n| \geq \frac{|V_n^{MM}|}{|MM|}$ and $\lim_n \frac{|V_n^{MM}|}{|V_n|} = 1$, so
\begin{align*}
p_\Sigma(\Gamma \acts X, \phi)	&	\geq	\limsup_n |V_n|^{-1}\log |X^n| - \|\phi\|	\\
							&	\geq	\frac{\log |A|}{|MM|} \limsup_n \frac{|V^{MM}_n|}{|V_n|} - \|\phi\|	\\
							&	=	\frac{\log |A|}{|MM|} - \|\phi\|.
\end{align*}

In particular, if $\phi \equiv 0$, we get that $h_\Sigma(\Gamma \acts X) \geq  \frac{\log |A|}{|MM|} > 0$.
\end{proof}

\section{Local weak* convergence and Gibbs measures}
\label{sec4}

We say that a sequence $\{\nu_n\}_n$, with $\nu_n \in \Prob(A^{V_n})$, {\bf locally weak* converges} to $\nu \in \Prob(A^\Gamma)$ if for every $\epsilon > 0$ and for every $r \in \N$,
$$
\lim_{n \rightarrow \infty} \frac{1}{|V_n|} |\left\{v \in V_n: \left\|\left(\pull_{v}^{\sigma_n, r}\right)_{*} \nu_n\vert_{F_r} - \nu\vert_{F_r}\right\|_{\mathrm{TV}} > \epsilon\right\}| = 0,
$$
where $\|\nu-\mu\|_{\mathrm{TV}}$ is the \emph{total variation distance}, $\nu\vert_F$ is the marginal of $\nu$ on $A^F$, and $\{F_r\}_r$ is the fixed exhaustion. We also consider the metric
$$
d(\mu,\nu) := \sum_{r} \frac{1}{2^r} \|\mu\vert_{F_r} - \nu\vert_{F_r}\|_{\mathrm{TV}}
$$
 in $\Prob(A^\Gamma)$, which is compatible with the weak* topology, and define the map $\pull_\uparrow^{\sigma_n}: \Prob(A^{V_n}) \to \Prob(A^\Gamma)$ by the formula
$$
\pull^{\sigma_n}_\uparrow(\nu_n) := \frac{1}{|V_n|} \sum_{v \in V} (\pull_{v}^{\sigma_n})_{*}(\nu_n),
$$
where we add the index $n$ in order to emphasize the space where $\nu_n$ belongs. Given $r \in \N$, $\epsilon > 0$, and $\nu \in \Prob(A^\Gamma)$, we also define the sets
$$
O(r,\epsilon,\nu) := \{\nu' \in \Prob(A^\Gamma): \left\|\nu'\vert_{F_r} - \nu\vert_{F_r}\right\|_{\mathrm{TV}} \leq \epsilon\},
$$
which are a basis for the weak* topology. In particular, it follows that if $\{\nu_n\}_n$ locally weak* converges to $\nu$, then $\lim_n d(\pull^{\sigma_n}_\uparrow(\nu_n), \nu) = 0$, i.e., $\pull^{\sigma_n}_\uparrow(\nu_n)$ weak* converges to $\nu$. In addition, and for convenience, given $\delta > 0$ and $\nu \in \Prob(A^\Gamma)$, we also define the open ball with radius $\delta$ centered at $\nu$ as
$$
B_d(\nu,\delta) := \{\nu' \in \Prob(A^\Gamma): d(\nu',\nu) < \delta\}.
$$

%\info{See Bruce Kitchens, p. 159}

%where $\nu\vert_{B_r}F$ corresponds to the marginal of $\nu$ on $A^{B_r}$ and
%$$
%\left\|\left(\nu^1\vert_{B_r} - \nu^2\vert_{B_r}\right\|_{\mathrm{TV}} = \sup_{A \in \B_{B_r}} 
%$$

\subsection{Empirical distributions}

Given $\fx \in A^{V_n}$, we denote by $P^{\sigma_n}_{\fx}$ the {\bf empirical distribution} $\pull_*^{\sigma_n}(\delta_{\fx}) \in \Prob(A^\Gamma)$, i.e.,
$$
P_{\fx}^{\sigma_n} = \frac{1}{|V_n|} \sum_{v \in V_n} \delta_{\pull_{v}^{\sigma_n}(\fx)}.
$$

In this symbolic setting, it is known (see \cite{austin2016additivity}) that the sofic entropy of $\nu \in \Prob(A^\Gamma, \Gamma)$ can be obtained by the formula
$$
h_{\Sigma}(\Gamma \acts (A^{\Gamma}, \nu)) = \inf_{\epsilon > 0} \inf_{r \in \N} \limsup _{n \rightarrow \infty} \frac{1}{n} \log |\Omega_\nu(r,\epsilon,\sigma_{n})|,
$$
where $\Omega_{\nu}(r, \epsilon, \sigma_n) := \left\{\fx \in A^{V_n}:\left\| P_{\fx}^{\sigma_n}\vert_{F_r} - \nu_{F_r}\right\|_{\mathrm{TV}} < \epsilon\right\}$. It is not difficult to see that $h_{\Sigma}(\Gamma \acts (A^{\Gamma}, \nu)) \neq -\infty$ if and only if there exists a sequence $\{\fx_n\}_n$, with $\fx_n \in A^{V_n}$, such that $P_{\fx_n}^{\sigma_n}$ weak* converges to $\nu$. Moreover, as we establish in the following lemma, if $\nu \in \Prob(X,\Gamma)$ and $X$ satisfies the TSSM property, then we can choose the sequence in $X^n$ instead of just $A^{V_n}$.

\begin{lemma}
\label{lem:Xn}
If $\nu \in \Prob(X,\Gamma)$, $X$ satisfies the TSSM property, and $h_{\Sigma}(\Gamma \acts (A^{\Gamma}, \nu)) \neq -\infty$, then there exists a sequence $\fy_n \in X^n$ such that $P_{\fy_n}^{\sigma_n}$ weak* converges to $\nu$.
\end{lemma}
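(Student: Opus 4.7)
The plan is to start from any microstate sequence witnessing that $h_\Sigma(\Gamma \acts (A^\Gamma,\nu)) \neq -\infty$ and then push it into $X^n$ using the error-correction provided by Lemma \ref{lem:error}, controlling how much the empirical distribution changes.

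First, I would invoke the characterization cited just before the lemma: since $h_\Sigma(\Gamma \acts (A^\Gamma,\nu)) \neq -\infty$, there exists a sequence $\fx_n \in A^{V_n}$ such that $P^{\sigma_n}_{\fx_n}$ weak* converges to $\nu$ in $\Prob(A^\Gamma)$. My candidate for $\fy_n$ will be any element of $X^n$ that agrees with $\fx_n$ off the neighborhood $\sigma_n^{MM}(E(\fx_n))$ of the error set; such a $\fy_n$ exists by Lemma \ref{lem:error}. The whole argument then reduces to showing that the number of coordinates modified in this correction is negligible compared to $|V_n|$, and that this implies the empirical distributions of $\fx_n$ and $\fy_n$ are asymptotically indistinguishable.

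For the first point, I would use that $\nu \in \Prob(X,\Gamma)$, so the marginal $\nu|_{MM}$ is concentrated on $X_{MM}$. Because $X_{MM} \subseteq A^{MM}$ is clopen, the weak* convergence $P^{\sigma_n}_{\fx_n} \to \nu$ gives
$$
\frac{1}{|V_n|}\#\{v \in V_n : \pull^{\sigma_n, MM}_v(\fx_n) \notin X_{MM}\} \longrightarrow 0.
$$
Since $E(\fx_n)$ is a subset of this set by definition, $|E(\fx_n)|/|V_n| \to 0$, and hence $|\sigma_n^{MM}(E(\fx_n))|/|V_n| \leq |MM|\,|E(\fx_n)|/|V_n| \to 0$.

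For the second point, given $r \in \N$, I would observe that if $\fy_n$ and $\fx_n$ differ on a set $D_n$, then $\pull^{\sigma_n, F_r}_v(\fy_n) \neq \pull^{\sigma_n, F_r}_v(\fx_n)$ only when some $\sigma_n^g(v)$, $g \in F_r$, lies in $D_n$; since each $\sigma_n^g$ is a permutation, this set of ``bad'' $v$ has cardinality at most $|F_r|\,|D_n|$. Therefore
$$
\bigl\|P^{\sigma_n}_{\fx_n}\vert_{F_r} - P^{\sigma_n}_{\fy_n}\vert_{F_r}\bigr\|_{\mathrm{TV}} \leq \frac{|F_r|\,|MM|\,|E(\fx_n)|}{|V_n|} \longrightarrow 0
$$
for each fixed $r$. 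Combined with $P^{\sigma_n}_{\fx_n}\vert_{F_r} \to \nu\vert_{F_r}$ in total variation for every $r$, this yields $P^{\sigma_n}_{\fy_n} \to \nu$ in the weak* topology, completing the proof. The only mild subtlety --- which I do not regard as a serious obstacle --- is verifying the vanishing of $|E(\fx_n)|/|V_n|$ from the weak* hypothesis; everything else is bookkeeping on how pullback names depend locally on the configuration.
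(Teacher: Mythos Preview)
Your proposal is correct and follows essentially the same route as the paper: start from a microstate sequence $\fx_n$ witnessing finite sofic entropy, correct it to $\fy_n \in X^n$ via Lemma~\ref{lem:error}, bound $|E(\fx_n)|/|V_n|$ using that $\nu$ is supported on $X$, and control $\|P^{\sigma_n}_{\fx_n}|_{F_r} - P^{\sigma_n}_{\fy_n}|_{F_r}\|_{\mathrm{TV}}$ by the fraction of modified coordinates. Your bound on the bad vertices via the permutation property of each $\sigma_n^g$ is in fact slightly cleaner than the paper's, which splits off the set $V_n \setminus V_n^{MMF_r}$ before estimating.
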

\begin{proof}
If $h_{\Sigma}(\Gamma \acts (A^{\Gamma}, \nu)) \neq -\infty$, there exists a sequence $\fx_n \in A^{V_n}$ such that $P_{\fx_n}^{\sigma_n}$ weak* converges to $\nu$. Now, for each $\fx_n$, consider $\fy_n \in X^n$ with $(\fy_n)_{V_n \setminus \sigma_n^{MM}(E(\fx_n))} = (\fx_n)_{V_n \setminus \sigma_n^{MM}(E(\fx_n))}$,
which is provided by Lemma \ref{lem:error}. 

Given $\epsilon > 0$ and $r \in \N$, we will prove that for sufficiently large $n$, $\|P_{\fy_n}^{\sigma_n}\vert_{F_r} - \nu\vert_{F_r}\|_{\mathrm{TV}} \leq \epsilon$. Without loss of generality, suppose that $MM \subseteq F_r$, where $M$ denotes some range for the TSSM property. First, observe that
\begin{align*}
\|P_{\fx_n}^{\sigma_n}\vert_{F_r} - P_{\fy_n}^{\sigma_n}\vert_{F_r}\|_{\mathrm{TV}}	 &	\leq	\frac{1}{|V_n|} \sum_{v \in V_n} \| \delta_{\pull_{v}^{\sigma_n}(\fx_n)} \vert_{F_r} - \delta_{\pull_{v}^{\sigma_n}(\fy_n)} \vert_{F_r} \|_{\mathrm{TV}}	\\
	&	=	\frac{1}{|V_n|} \sum_{v \in V_n} \frac{1}{2} \sum_{w \in A^{F_r}} | \delta_{\pull_{v}^{\sigma_n}(\fx_n)}([w]) - \delta_{\pull_{v}^{\sigma_n}(\fy_n)}([w]) |	\\
	&	=	\frac{1}{|V_n|} |\{v \in V_n: \pull_{v}^{\sigma_n,r}(\fx_n)  \neq \pull_{v}^{\sigma_n,r}(\fy_n)\}|	\\
	&	\leq	\frac{1}{|V_n|} (|V_n \setminus V^{MMF_r}_n| \\
	&		\qquad + |\{v \in V^{MMF_r}_n: \sigma_n^{F_r}(v) \cap \sigma_n^{MM}(E(\fx_n)) \neq \emptyset\}|)	\\
	&	\leq	\frac{1}{|V_n|} (|V_n \setminus V^{MMF_r}_n| + |M|^2|F_r||E(\fx_n)|).
\end{align*}

On the other hand, since $\nu$ is supported on $X$,
\begin{align*}
\|P_{\fx_n}^{\sigma_n}\vert_{MM} - \nu\vert_{MM}\|_{\mathrm{TV}}	&	=		\frac{1}{2}\sum_{w \in A^{MM}} |P_{\fx_n}^{\sigma_n}([w]) - \nu([w])|	\\
				&	\geq		\frac{1}{2}\sum_{w \in A^{MM} \setminus X_{MM}} P_{\fx_n}^{\sigma_n}([w])	\\
				&	=		\frac{1}{2}\sum_{w \in A^{MM} \setminus X_{MM}} \frac{1}{|V_n|}\sum_{v \in V_n} \delta_{\pull_{v}^{\sigma_n}(\fx_n)}([w])	\\
				&	=		\frac{1}{2|V_n|}\sum_{v \in V_n} \sum_{w \in A^{MM} \setminus X_{MM}}\delta_{\pull_{v}^{\sigma_n}(\fx_n)}([w])	\\
				&	=		\frac{1}{2|V_n|} |\{v \in V_n: \pull_{v}^{\sigma_n,{MM}}(\fx_n) \notin X_{MM}\}|.
\end{align*}

In particular, $|E(\fx_n)| \leq 2|V_n|\|P_{\fx_n}^{\sigma_n}\vert_{F_{MM}} - \nu\vert_{F_{MM}}\|_{\mathrm{TV}}$. Therefore,
\begin{equation}
\label{eq1}
\|P_{\fx_n}^{\sigma_n}\vert_{F_r} - P_{\fy_n}^{\sigma_n}\vert_{F_r}\|_{\mathrm{TV}}	\leq	\frac{|V_n \setminus V^{MMF_r}_n|}{|V_n|} + 2|M|^2|F_r|\|P_{\fx_n}^{\sigma_n}\vert_{F_{MM}} - \nu\vert_{F_{MM}}\|_{\mathrm{TV}}.
\end{equation}

Now, given $\epsilon > 0$, pick $n_0 \in \N$ so that $|V_n \setminus V^{MMF_r}_n| \leq \frac{\epsilon}{3}|V_n|$, $\|P_{\fx_n}^{\sigma_n}\vert_{F_r} - \nu\vert_{F_r}\|_{\mathrm{TV}} \leq \frac{\epsilon}{3}$, and $\|P_{\fx_n}^{\sigma_n}\vert_{F_{MM}} - \nu\vert_{F_{MM}}\|_{\mathrm{TV}} \leq \frac{\epsilon}{6|M|^2|F_r|}$ for every $n \geq n_0$, which is provided by the weak* convergence. Then,
\begin{align*}
\|P_{\fy_n}^{\sigma_n}\vert_{F_r} - \nu\vert_{F_r}\|_{\mathrm{TV}}	&	\leq	\|P_{\fy_n}^{\sigma_n}\vert_{F_r} - P_{\fx_n}^{\sigma_n}\vert_{F_r}\|_{\mathrm{TV}}	+ \|P_{\fx_n}^{\sigma_n}\vert_{F_r} - \nu\vert_{F_r}\|_{\mathrm{TV}}	\\
													&	\leq	\left(\frac{|V_n \setminus V^{MMF_r}_n|}{|V_n|} + 2|M|^2|F_r|\|P_{\fx_n}^{\sigma_n}\vert_{F_{MM}} - \nu\vert_{F_{MM}}\|_{\mathrm{TV}}\right) + \frac{\epsilon}{3}	\\
													&	\leq	\frac{\epsilon}{3} + \frac{\epsilon}{3} + \frac{\epsilon}{3} = \epsilon,
\end{align*}
and we conclude.
\end{proof}

In the next results we establish some useful invariance properties.

\begin{lemma}[{\cite[Lemma 3.2]{austin2016additivity}}]
\label{lemma32}
If $F \Subset \Gamma$ and $g \in \Gamma$, then
$$
\sup _{\fx \in A^{V_{n}}}\left\|P_{\fx}^{\sigma_{n}}\vert_{F} - T_{*}^{g} P_{\fx}^{\sigma_{n}}\vert_{F}\right\|_{\mathrm{TV}} \to 0 \quad \text { as } n \to \infty.
$$
\end{lemma}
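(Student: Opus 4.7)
The plan is to unwind the definitions of the pullback name and the shift action, and to show that $T^g$ acts on pullback names at a given vertex $v$ in essentially the same way as re-indexing by $\sigma_n^g(v)$, with a small discrepancy controlled by the sofic approximation property. Since $\sigma_n^g$ is a bona fide permutation of $V_n$, a change of variables on the uniform sum over $V_n$ will cost nothing, and this will give a bound that is uniform in $\fx$.

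More precisely, first I would compute both sides pointwise. For any $v \in V_n$, $h \in F$, and $\fx \in A^{V_n}$,
\[
(T^g \pull_v^{\sigma_n}(\fx))(h) = \pull_v^{\sigma_n}(\fx)(hg) = \fx(\sigma_n^{hg}(v)),
\qquad
\pull_{\sigma_n^g(v)}^{\sigma_n}(\fx)(h) = \fx(\sigma_n^h(\sigma_n^g(v))).
\]
Thus $T^g \pull_v^{\sigma_n}(\fx)|_{F} = \pull_{\sigma_n^g(v)}^{\sigma_n}(\fx)|_{F}$ whenever $v$ lies in the ``multiplicativity set''
\[
W_n := \{ v \in V_n : \sigma_n^h(\sigma_n^g(v)) = \sigma_n^{hg}(v) \text{ for all } h \in F \}.
\]
Applying the definition of a sofic approximation to the finite set $F \cup \{g\}$ (together with the monotonicity of the $(F,\delta)$-multiplicativity condition in $F$), one obtains $|V_n \setminus W_n|/|V_n| \to 0$ as $n \to \infty$, and this bound is independent of $\fx$.

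Next, using that $\sigma_n^g$ is a permutation of $V_n$, I would apply the change of variables $v \mapsto \sigma_n^g(v)$ in the defining sum to write
\[
P_{\fx}^{\sigma_n}|_F \;=\; \frac{1}{|V_n|}\sum_{v\in V_n}\delta_{\pull_{v}^{\sigma_n}(\fx)|_F}\;=\;\frac{1}{|V_n|}\sum_{v\in V_n}\delta_{\pull_{\sigma_n^g(v)}^{\sigma_n}(\fx)|_F},
\]
while $T^g_* P_{\fx}^{\sigma_n}|_F = \frac{1}{|V_n|}\sum_v \delta_{T^g\pull_v^{\sigma_n}(\fx)|_F}$ by definition. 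Subtracting, the contributions from $v \in W_n$ cancel by the pointwise identity above, so
\[
\bigl\| P_{\fx}^{\sigma_n}|_F - T^g_* P_{\fx}^{\sigma_n}|_F \bigr\|_{\mathrm{TV}}
\;\leq\; \frac{1}{|V_n|}\sum_{v \notin W_n}\bigl\|\delta_{T^g\pull_v^{\sigma_n}(\fx)|_F} - \delta_{\pull_{\sigma_n^g(v)}^{\sigma_n}(\fx)|_F}\bigr\|_{\mathrm{TV}}
\;\leq\; \frac{|V_n \setminus W_n|}{|V_n|},
\]
using that the total variation distance between two Dirac masses is at most $1$. Taking the supremum over $\fx \in A^{V_n}$ on the left preserves the bound on the right, and the right-hand side tends to $0$ as $n \to \infty$, which is the desired conclusion.

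There is no real obstacle; the only subtle point is to not confuse the two ways the group acts, namely the right shift $T^g$ on $A^\Gamma$ versus the composition with $\sigma_n^g$ on the index set $V_n$. Keeping those straight, the bound is uniform in $\fx$ precisely because $W_n$ depends only on $\sigma_n$, $g$, and $F$, and not on the configuration~$\fx$.
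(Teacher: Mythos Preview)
Your proof is correct and follows essentially the same approach as the paper: both establish the pointwise identity $T^g\pull_v^{\sigma_n}(\fx)|_F = \pull_{\sigma_n^g(v)}^{\sigma_n}(\fx)|_F$ for $v$ in a set of high density (the paper invokes Lemma~\ref{lemma31} for this, while you reprove it directly), and then use that $\sigma_n^g$ is a permutation of $V_n$ to cancel the main contribution. Your version is slightly more explicit in that it isolates the set $W_n$ and gives the clean bound $|V_n\setminus W_n|/|V_n|$, but the underlying argument is the same.
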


\begin{proof}
Lemma \ref{lemma31} gives that
$$
\left.\left(T^{g} \pull_{v}^{\sigma_{n}}(\fx)\right)\right|_{F}=\left.\pull_{\sigma_{n}^{g}(v)}^{\sigma_{n}}(\fx)\right|_{F} \quad \text { w.h.p. in } v,
$$
and therefore,
\begin{align*}
\left.P_{\fx}^{\sigma_{n}}\right\vert_{F} - \left.T_{*}^{g} P_{\fx}^{\sigma_{n}}\right\vert_{F}	&	=	\frac{1}{\left|V_{n}\right|} \sum_{v \in V_{n}} \left(\delta_{\pull_{v}^{\sigma_{n}}(\fx)\vert_{F}}-\delta_{\left(T^{g} \pull_{v}^{\sigma_{n}}(\fx)\right)\vert_{F}}\right)	\\
																	&	=	\frac{1}{\left|V_{n}\right|} \sum_{v \in V_{n}} \left(\delta_{\pull_v^{\sigma_{n}}(\fx)\vert_F} - \delta_{\pull_{\sigma_n^g(v)}^{\sigma_n}(\fx)\vert_F}\right)+o(1).
\end{align*}

By observing that the last sum vanishes because $\sigma_{n}^{g}$ is a permutation of $V_{n}$, we conclude.
\end{proof}

Lemma \ref{lemma32} has the following consequence.

\begin{corollary}
\label{lem:inv}
Let $F \Subset \Gamma$ and $g \in \Gamma$. Then,
$$
\sup_{\nu_n \in \Prob(A^{V_n})}\left\|\pull^{\sigma_n}_\uparrow(\nu_n)\vert_{F} - T_{*}^{g} \pull^{\sigma_n}_\uparrow(\nu_n)\vert_{F}\right\|_{\mathrm{TV}} \to 0 \quad \text { as } n \to \infty.
$$
\end{corollary}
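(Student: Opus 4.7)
The plan is to deduce this directly from Lemma \ref{lemma32} by representing $\pull^{\sigma_n}_\uparrow(\nu_n)$ as an average of empirical distributions $P^{\sigma_n}_{\fx}$ over $\fx$ sampled from $\nu_n$, and then invoking convexity of the total variation norm.

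First I would observe the identity
$$
\pull^{\sigma_n}_\uparrow(\nu_n) \;=\; \int_{A^{V_n}} P^{\sigma_n}_{\fx}\, d\nu_n(\fx).
$$
This holds because $(\pull_v^{\sigma_n})_*\nu_n = \int \delta_{\pull_v^{\sigma_n}(\fx)}\, d\nu_n(\fx)$, and averaging over $v\in V_n$ is just the definition of $P^{\sigma_n}_{\fx}$. Restriction to $A^{F}$ commutes with the pushforward integral, and so does $T^g_*$ by linearity, giving
$$
\pull^{\sigma_n}_\uparrow(\nu_n)\vert_{F} - T^g_*\pull^{\sigma_n}_\uparrow(\nu_n)\vert_{F} \;=\; \int_{A^{V_n}} \left(P^{\sigma_n}_{\fx}\vert_{F} - T^g_* P^{\sigma_n}_{\fx}\vert_{F}\right)\, d\nu_n(\fx).
$$

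Next I would apply the triangle inequality for signed measures (equivalently, the convexity of $\|\cdot\|_{\mathrm{TV}}$) to bound
$$
\left\|\pull^{\sigma_n}_\uparrow(\nu_n)\vert_{F} - T^g_*\pull^{\sigma_n}_\uparrow(\nu_n)\vert_{F}\right\|_{\mathrm{TV}} \;\leq\; \int_{A^{V_n}} \left\|P^{\sigma_n}_{\fx}\vert_{F} - T^g_* P^{\sigma_n}_{\fx}\vert_{F}\right\|_{\mathrm{TV}}\, d\nu_n(\fx) \;\leq\; \sup_{\fx \in A^{V_n}} \left\|P^{\sigma_n}_{\fx}\vert_{F} - T^g_* P^{\sigma_n}_{\fx}\vert_{F}\right\|_{\mathrm{TV}}.
$$
Since the right-hand side does not depend on $\nu_n$, I can take the supremum over $\nu_n \in \Prob(A^{V_n})$ on the left. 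Lemma \ref{lemma32} guarantees that the right-hand side tends to $0$ as $n\to\infty$, completing the proof.

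There is no real obstacle here; the only point that merits care is the swap of $T^g_*$ and restriction-to-$F$ with the integral against $\nu_n$, which is immediate because both are linear and continuous on the (finite-dimensional) space of signed measures on $A^F$ in question. The convexity estimate $\|\int \mu_\fx\, d\nu_n\|_{\mathrm{TV}} \leq \int \|\mu_\fx\|_{\mathrm{TV}}\, d\nu_n$ is a standard application of Jensen's inequality (or Fubini applied to the definition of total variation as a sup over $\pm 1$-valued test functions).
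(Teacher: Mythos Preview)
Your proof is correct and follows essentially the same route as the paper: the paper writes $\nu_n = \sum_{\fx} \alpha_{\fx}\,\delta_{\fx}$ as a finite convex combination, applies the triangle inequality for $\|\cdot\|_{\mathrm{TV}}$ to bound by $\sup_{\fx}\|P^{\sigma_n}_{\fx}\vert_F - T^g_* P^{\sigma_n}_{\fx}\vert_F\|_{\mathrm{TV}}$, and then invokes Lemma~\ref{lemma32}. Your integral formulation is just the same argument (the space $A^{V_n}$ being finite), so there is nothing to add.
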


\begin{proof}
For any $\nu_n \in \Prob(A^{V_n})$, there exist weights $0 \leq \alpha_{\fx} \leq 1$ with $\sum_{\fx \in X^n} \alpha_{\fx} = 1$, such that $\nu_n = \sum_{\fx \in X^n} \alpha_{\fx} \delta_{\fx}$. Therefore,
\begin{align*}
\left\|\pull^{\sigma_n}_\uparrow(\nu_n)\vert_{F} - T_{*}^{g} \pull^{\sigma_n}_\uparrow(\nu_n)\vert_{F}\right\|_{\mathrm{TV}}	&	\leq \sum_{\fx \in A^{V_n}} \alpha_{\fx} \left\|P_{\fx}^{\sigma_{n}}\vert_{F} - T_{*}^{g} P_{\fx}^{\sigma_{n}}\vert_{F}\right\|_{\mathrm{TV}}	\\
																				&	\leq	\sup _{\fx \in A^{V_n}}\left\|P_{\fx}^{\sigma_{n}}\vert_{F} - T_{*}^{g} P_{\fx}^{\sigma_{n}}\vert_{F}\right\|_{\mathrm{TV}}.
\end{align*}

Considering this, and since $\nu_n$ is arbitrary, we have that
$$
\sup_{\nu_n \in \Prob(A^{V_n})}\left\|\pull^{\sigma_n}_\uparrow(\nu_n)\vert_{F} - T_{*}^{g} \pull^{\sigma_n}_\uparrow(\nu_n)\vert_{F}\right\|_{\mathrm{TV}} \leq \sup_{\fx \in  A^{V_n}}\left\|P_{\fx}^{\sigma_{n}}\vert_{F} - T_{*}^{g} P_{\fx}^{\sigma_{n}}\vert_{F}\right\|_{\mathrm{TV}} \to 0
$$
as $n \to \infty$, due to Lemma \ref{lemma32}.
\end{proof}

\begin{lemma}
Given $\nu \in \Prob(A^\Gamma)$ and $\{\nu_n\}_n$ with $\nu_n \in \Prob(A^{V_n})$, if $\lim_n d(\pull^{\sigma_n}_\uparrow(\nu_n), \nu) = 0$, then $\nu$ is $\Gamma$-invariant.
\end{lemma}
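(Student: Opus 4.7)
The plan is to verify $T_*^g \nu = \nu$ by comparing finite marginals through the approximating sequence and invoking Corollary~\ref{lem:inv} for near-invariance. Because $d$ is compatible with the weak* topology on $\Prob(A^\Gamma)$ and the exhaustion $\{F_r\}_r$ eventually captures every finite subset of $\Gamma$, it suffices to prove $\|\nu\vert_F - T_*^g \nu\vert_F\|_{\mathrm{TV}} = 0$ for each $g \in \Gamma$ and each $F \Subset \Gamma$. Fix such a pair $(g,F)$, and choose $r \in \N$ with $F \cup Fg \subseteq F_r$.

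Insert the intermediate points $\pull^{\sigma_n}_\uparrow(\nu_n)\vert_F$ and $T_*^g \pull^{\sigma_n}_\uparrow(\nu_n)\vert_F$ via the triangle inequality:
\begin{align*}
\|\nu\vert_F - T_*^g \nu\vert_F\|_{\mathrm{TV}}
&\leq \|\nu\vert_F - \pull^{\sigma_n}_\uparrow(\nu_n)\vert_F\|_{\mathrm{TV}} \\
&\quad + \|\pull^{\sigma_n}_\uparrow(\nu_n)\vert_F - T_*^g \pull^{\sigma_n}_\uparrow(\nu_n)\vert_F\|_{\mathrm{TV}} \\
&\quad + \|T_*^g \pull^{\sigma_n}_\uparrow(\nu_n)\vert_F - T_*^g \nu\vert_F\|_{\mathrm{TV}}.
\end{align*}
Since total variation is non-increasing under marginalization, the first summand is bounded by $\|\pull^{\sigma_n}_\uparrow(\nu_n)\vert_{F_r} - \nu\vert_{F_r}\|_{\mathrm{TV}} \leq 2^r d(\pull^{\sigma_n}_\uparrow(\nu_n),\nu)$, which tends to zero by hypothesis. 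The second summand vanishes by Corollary~\ref{lem:inv}. The third equals $\|\pull^{\sigma_n}_\uparrow(\nu_n)\vert_{Fg} - \nu\vert_{Fg}\|_{\mathrm{TV}}$, because pushforward by $T^g$ merely relabels coordinates (the coordinate at $h \in F$ of $T_*^g \mu$ is the coordinate at $hg \in Fg$ of $\mu$); by the same reasoning as for the first summand and the choice $Fg \subseteq F_r$, it is at most $2^r d(\pull^{\sigma_n}_\uparrow(\nu_n),\nu) \to 0$.

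Letting $n \to \infty$ yields $\nu\vert_F = T_*^g \nu\vert_F$; since $F \Subset \Gamma$ and $g \in \Gamma$ were arbitrary, $\nu \in \Prob(A^\Gamma,\Gamma)$. There is no serious obstacle here: the proof is essentially bookkeeping with the triangle inequality, powered by the approximate shift-invariance of the pulled-back averages already provided by Corollary~\ref{lem:inv} and by the fact that the metric $d$ dominates finite-marginal total variations.
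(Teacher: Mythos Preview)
Your proof is correct and follows essentially the same route as the paper: a three-term triangle inequality with Corollary~\ref{lem:inv} controlling the middle term and the convergence hypothesis controlling the two outer terms. The only cosmetic difference is that you work with total-variation distances of marginals while the paper argues pointwise on cylinder values $\nu([x_F])$; your explicit use of the bound $\|\cdot\vert_{F_r}\|_{\mathrm{TV}} \le 2^r d(\cdot,\cdot)$ is in fact slightly cleaner than the paper's phrasing.
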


\begin{proof}
It suffices to prove that $\nu([x_F]) = T^g_*\nu([x_{F}])$ for any $g \in \Gamma$, $x \in X$, and $F \Subset \Gamma$. By the triangular inequality, 
\begin{align*}
|\nu([x_{F}]) - T^g_*\nu([x_F])|	&	\leq	|\nu([x_{F}]) - \pull^{\sigma_n}_\uparrow(\nu_n)([x_{F}])| + |\pull^{\sigma_n}_\uparrow(\nu_n)([x_{F}]) - T^{g}_* \pull^{\sigma_n}_\uparrow(\nu_n)([x_{F}])|	\\
						&	\qquad + |T^{g}_* \pull^{\sigma_n}_\uparrow(\nu_n)([x_{F}]) - T^g_*\nu([x_{F}])|.
\end{align*}

Given $\epsilon > 0$, by the local weak* convergence, for $r \in \N$ such that $F_r \supseteq F \cup Fg$ and sufficiently large $n$, we have that
\begin{align*}
|\nu([x_{F}]) - \pull^{\sigma_n}_\uparrow(\nu_n)([x_{F}])|  \leq \left\| \nu\vert_{F_r} - \pull^{\sigma_n}_\uparrow(\nu_n)\vert_{F_r}\right\|_{\mathrm{TV}} < \frac{\epsilon}{3}
\end{align*}
and
\begin{align*}
|T^{g}_* \pull^{\sigma_n}(\nu_n)([x_{F}]) - T^g_*\nu([x_{F}])|	&	=	|\pull^{\sigma_n}_\uparrow(\nu_n)([(T^{g^{-1}}x)_{Fg}]) - \nu([(T^{g^{-1}}x)_{Fg}])|	\\
												&	\leq	\left\|\pull^{\sigma_n}_\uparrow(\nu_n)\vert_{F_r} - \nu\vert_{F_r}\right\|_{\mathrm{TV}} < \frac{\epsilon}{3}
\end{align*}

In addition, by Lemma \ref{lem:inv},
\begin{align*}
|\pull^{\sigma_n}_\uparrow(\nu_n)([x_F]) - T^{g}_* \pull^{\sigma_n}_\uparrow(\nu_n)([x_F])|	\leq	\left\|\pull^{\sigma_n}_\uparrow(\nu_n)\vert_{F_r} - T^{g}_* \pull^{\sigma_n}_\uparrow(\nu_n)\vert_{F_r}\right\|_{\mathrm{TV}} < \frac{\epsilon}{3},
\end{align*}
and since $\epsilon$ was arbitrary, by combining these three inequalities, we conclude.

%\sup _{\fx \in A^{V_{n}}}\left\|\left(P_{\fx}^{\sigma_{n}}\right)_{F}-\left(S_{*}^{g} P_{\fx}^{\sigma_{n}}\right)_{F}\right\|_{\mathrm{TV}} \longrightarrow 0 \quad \text { as } n \longrightarrow \infty.
\end{proof}

%\section{Gibbs measures and derived models}
%\label{sec5}

Given a subshift $X$ and a potential $\phi$, a relevant family of measures in $\Prob(X)$ are the so-called \emph{Gibbs measures}.

\subsection{Gibbs measures}

Given $F \Subset \Gamma$, we define the {\bf $F$-sum} as $\phi_F(x) := \sum_{g \in F} \phi(T^g x)$ and the {\bf $(F,x)$-partition function} as $Z_F(y) = \sum_{x \in [y_{F^c}]} \exp(\phi_F(x))$. Considering this, and given $y \in X$, we define the {\bf $(F,y)$-specification} as
$$
\gamma_F(x \vert y) = 1_{[y_{F^c}]}(x) \frac{\exp(\phi_F(x))}{Z_F(y)},
$$
for every $x \in X$. We call the collection $\gamma = \{\gamma_F(\cdot \vert y)\}_{F,y}$ the {\bf $(X,\phi)$-specification}. Notice that each element $\gamma_F(\cdot \vert y)$ is a probability distribution on $X$. We say that a measure $\mu \in \Prob(A^\Gamma)$ is a {\bf Gibbs measure for $\phi$ on $X$} if, for all $F \Subset \Gamma$,
$$
\mu([x_F] \vert \mathcal{B}_{\Gamma \setminus F})(y) = \gamma_F(x \vert y) \quad \mu(y)\text{-a.s.},
$$
where $\mathcal{B}_{\Gamma \setminus F}$ is the $\sigma$-algebra generated by $\{[x_E]: E \Subset \Gamma \setminus F\}$ and $\nu(A | \mathcal{F})$ denotes the conditional expectation of the indicator function $1_{A}$ for $A \in \mathcal{B}_F$, with respect to $\nu \in \Prob(A^\Gamma)$ and the sub-$\sigma$-algebra $\mathcal{F}$. Since we are assuming that $\phi$ has finite range $M$, it turns out that $\gamma$ is {\bf Markovian}, in the sense that for all $F \Subset \Gamma$, we have that
$$
\gamma_F(\cdot \vert y_1) = \gamma_F(\cdot \vert y_2),
$$
for all $y_1,y_2 \in X$ such that $y_1 \vert_{MF} = y_2 \vert_{MF}$. We define the {\bf $M$-boundary} of a set $F$ as the set $MF \setminus F$ and denote it by $\partial_M F$. In other words, $\gamma_F(\cdot \vert y)$ only depends on $y_{\partial_M F}$. Notice that the support of $\mu$ is always contained in $X$ and it is not difficult to prove that under mixing assumptions like the TSSM property or weaker, then $\mu$ is fully supported (e.g., see \cite{1-ruelle}).

\subsection{Derived models}

We define Gibbs measures in $X^n$ analogously. Given $U \subseteq V_n$ and $\fx,\fy \in X^n$, set
$$
\fEn_{n,U}(\fx) := \sum_{v \in U \cap V_n} \phi(\pull^{\sigma_n}_v(\fx)) \quad \text{ and }	\quad	Z_{n,U}(\fy) := \sum_{\fx \in [\fy_{U^c}]} \exp(\fEn_{n,U}(\fx)),
$$
where we extend the cylinder notation to $X^n$ in the natural way. We define the {\bf $(U,\fx)$-specification} as
$$
\gamma^n_U(\fx \vert \fy) = 1_{[\fy_{U^c}]}(\fx) \frac{\exp(\fEn_{n,U}(\fx))}{Z_{n,U}(\fy)},
$$
for every $\fy \in X^n$ and $\fx \in A^{V_n}$. We call the collection $\gamma^n = \{\gamma^n_U(\cdot \vert \fy)\}_{U,\fy}$ the {\bf $n$th derived $(X^n,\phi)$-specification}. Notice that each element $\gamma^n_U(\cdot \vert \fy)$ is a probability distribution on $A^{V_n}$. Since $V_n$ is finite, we can define Gibbs measures without considering conditional expectations. Then, the {\bf $n$th derived Gibbs measure for $\phi$ on $X^n$} is defined as
$$
\mu_n(\fx) = 1_{X^n}(\fx)\frac{\exp(\fEn_n(\fx))}{Z_n},
$$
with $Z_n = Z_{n,\emptyset} = \sum_{\fx \in X^n} \phi(\pull^{\sigma}_v(\fx))$. Notice that the support of $\mu_n$ is $X^n$. We have the following simple but relevant preliminary results.

%Notice that if $v \in V_n \setminus V^{2R}$, then $x_v$ is an independent random variable with uniform distribution.

\begin{lemma}[{\cite[Lemma 3.1]{austin2016additivity}}]
\label{lemma31}
If $F \Subset \Gamma$ and $g \in \Gamma$, then, w.h.p. in $v \in V_n$,
$$
\pull_{\sigma_{n}^{g}(v)}^{\sigma_{n},F} = T^g \pull_{v}^{\sigma_{n},Fg},
$$
where we identify elements of $A^{F g}$ with elements of $A^{F}$ in the obvious way.
\end{lemma}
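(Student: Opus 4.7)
The plan is to reduce the identity of pullback names to the associativity-type identity $\sigma_n^h(\sigma_n^g(v)) = \sigma_n^{hg}(v)$ for all $h \in F$, and then to invoke the multiplicativity of the sofic approximation together with a union bound.

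First, I would unfold both sides as functions on $A^{V_n}$ (viewing the domain $U = V_n$ so that the restriction in the definition of $\pull$ is trivial). For any $\fx \in A^{V_n}$, the $h$-th coordinate (with $h \in F$) of $\pull_{\sigma_n^g(v)}^{\sigma_n,F}(\fx)$ is by definition $\fx_{\sigma_n^h(\sigma_n^g(v))}$. Under the identification $A^{Fg} \cong A^F$ via $T^g$, the $h$-th coordinate of $T^g \pull_v^{\sigma_n,Fg}(\fx)$ is the value of $\pull_v^{\sigma_n,Fg}(\fx)$ at $hg \in Fg$, which equals $\fx_{\sigma_n^{hg}(v)}$. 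Therefore the two elements of $A^F$ coincide as soon as
\[
\sigma_n^h(\sigma_n^g(v)) = \sigma_n^{hg}(v) \quad \text{for every } h \in F.
\]

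Second, I would apply the definition of sofic approximation. Fix $\epsilon > 0$ and set $\delta := \epsilon/|F|$. Choose a finite set $F' \Subset \Gamma$ containing $\{g\} \cup F \cup Fg$. By hypothesis there exists $n_0$ such that $\sigma_n$ is $(F',\delta)$-multiplicative for every $n \geq n_0$; in particular, for each fixed $h \in F$,
\[
\bigl|\{v \in V_n : \sigma_n^h(\sigma_n^g(v)) \neq \sigma_n^{hg}(v)\}\bigr| \leq \delta |V_n|.
\]
A union bound over the $|F|$ elements $h \in F$ then yields
\[
\bigl|\{v \in V_n : \sigma_n^h(\sigma_n^g(v)) \neq \sigma_n^{hg}(v) \text{ for some } h \in F\}\bigr| \leq |F| \delta |V_n| = \epsilon |V_n|.
\]
Since $\epsilon$ is arbitrary, the identity of the previous display holds w.h.p.\ in $v$, and combined with the first paragraph this gives the stated equality of functions on $A^{V_n}$ w.h.p.\ in $v \in V_n$.

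There is no real obstacle here; the only care required is the bookkeeping in the identification $A^{Fg} \cong A^F$ (so that the shift $T^g$ correctly matches the outer index $\sigma_n^h(\sigma_n^g(\cdot))$ with the inner index $\sigma_n^{hg}(\cdot)$) and ensuring the finite set $F'$ in the multiplicativity property is large enough to encompass all the products $hg$ that appear. Both are automatic from the finiteness of $F$.
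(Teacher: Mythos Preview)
Your proof is correct and follows essentially the same approach as the paper: reduce the pullback identity coordinate-wise to the multiplicativity relation $\sigma_n^h(\sigma_n^g(v)) = \sigma_n^{hg}(v)$, then use the sofic approximation together with a union bound over $h \in F$. The paper phrases the last step as a reduction to singleton $F = \{h\}$, but this is just your union bound in disguise.
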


\begin{proof}
It suffices to prove this when $F$ is an arbitrary singleton, say $\{h\}$. Then it holds w.h.p. in $v \in V_n$ that $\sigma_{n}^{h}(\sigma_{n}^{g}(v)) = \sigma_{n}^{h g}(v)$. If $v$ satisfies this, and $\fx \in X^n$, then
$$
(\pull_{\sigma_{n}^{g}(v)}^{\sigma_{n}}(\fx))(h) = x_{\sigma_{n}^{h}(\sigma_{n}^{g}(v))} = x_{\sigma_{n}^{h g}(v)} = (\pull_{v}^{\sigma_{n}}(\fx))(hg) = T^g(\pull_{v}^{\sigma_{n}}(\fx))(h),
$$
and we conclude.
\end{proof}

\begin{lemma}
\label{lem:spec}
Given $F \Subset \Gamma$, w.h.p. in $v \in V_n$, for every $\fx,\fy \in X^n$,
$$
\gamma^{n}_{\sigma_n^F(v)}(\fx \vert \fy) = \gamma_F(\pull_{v}^{\sigma_n}(\fx) \vert \pull_{v}^{\sigma_n}(\fy)).
$$
\end{lemma}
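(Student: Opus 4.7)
The plan is to show that, for $v$ in a suitable high-probability good set, the chunk of the sofic approximation around $v$ is a faithful copy of the ball $F_r \subset \Gamma$, so both sides of the desired equation reduce to exactly the same finite-range expressions. Let $M$ be a common symmetric range for the TSSM of $X$ and for the potential $\phi$. Pick $r \in \N$ large enough that $MMF \subseteq F_r$ and set $V_n^* = V_n^{F_r}$; the sofic hypothesis (combined with the fact that $|V_n^F|/|V_n| \to 1$ for every $F \Subset \Gamma$) gives $|V_n^*|/|V_n| \to 1$. Throughout, write $U := \sigma_n^F(v)$. For $v \in V_n^*$, the map $\pi_v : g \mapsto \sigma_n^g(v)$ is a bijection from $F_r$ onto $\sigma_n^{F_r}(v)$ that is compatible with the sofic structure in the usual Cayley-like sense.

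The first step is to match the derived energy with the $F$-sum of $\phi$ pulled back from $v$. Applying Lemma \ref{lemma31} to each $g \in F$ and intersecting the finitely many w.h.p.\ events, one obtains
\[
\pull_{\sigma_n^g(v)}^{\sigma_n, M}(\fx) = T^g\,\pull_v^{\sigma_n, Mg}(\fx) \qquad \text{for all } g \in F,\ \fx \in X^n.
\]
Since $\phi$ has range $M$, this gives $\phi(\pull_{\sigma_n^g(v)}^{\sigma_n}(\fx)) = \phi(T^g\,\pull_v^{\sigma_n}(\fx))$, and summing over $g \in F$, while using that $g \mapsto \sigma_n^g(v)$ is a bijection from $F$ to $U$ (by $F$-goodness), yields $\fEn_{n,U}(\fx) = \phi_F(\pull_v^{\sigma_n}(\fx))$.

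The second step matches the indicators and the partition functions. The key observation is that under $\pi_v$ the $M$-boundary $\partial_M F = MF \setminus F$ is mapped bijectively onto the sofic $M$-boundary $\sigma_n^{MF}(v) \setminus U$. Invoking TSSM with range $M$, which reduces global admissibility in $X$ around $F$ to compatibility on $M$-windows inside $MF$ and which analogously controls the local consistency defining $X^n$ inside $\sigma_n^{MF}(v)$, one checks that the set of $\fx_U \in A^U$ yielding $\fx \in X^n$ with $\fx_{V_n \setminus U} = \fy_{V_n \setminus U}$ is, via $\pi_v$, in natural bijection with the set of $x_F \in A^F$ yielding $x \in X$ with $x_{F^c} = \pull_v^{\sigma_n}(\fy)_{F^c}$. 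Combined with the matching of energies from the first step, this produces a term-by-term identification giving $Z_{n,U}(\fy) = Z_F(\pull_v^{\sigma_n}(\fy))$ together with matching of indicators, and dividing matching numerators by matching denominators yields the claimed identity.

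The main technical obstacle is reconciling the finite space $X^n$ with the (potentially infinite) subshift $X$, since $\pull_v^{\sigma_n}(\fx)$ need not lie globally in $X$ because the sofic approximation is only locally faithful. This is exactly where TSSM does the real work: it guarantees that $X$-admissibility near $F$ is determined inside $MF$ and $X^n$-admissibility near $U$ is determined inside $\sigma_n^{MF}(v)$, so that the whole comparison of specifications collapses to finite-range data that is cleanly transported by the local isomorphism $\pi_v$.
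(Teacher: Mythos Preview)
Your proposal is correct and follows essentially the same approach as the paper's proof: both match the energy term via Lemma~\ref{lemma31} and match the indicator/partition-function terms by observing that, on a high-probability good set of vertices, local admissibility in $X^n$ near $\sigma_n^F(v)$ corresponds under the local bijection $g \mapsto \sigma_n^g(v)$ to local admissibility in $X$ near $F$. The paper's version is terser (it states the consistency equivalence $\fx_{\sigma_n^F(v)}\fy_{\sigma_n^{MF \setminus F}(v)} \in X^n_{\sigma_n^{MF}(v)} \iff \pull_v^{\sigma_n}(\fx)_F\,\pull_v^{\sigma_n}(\fy)_{MF \setminus F} \in X_{MF}$ without further comment), whereas you spell out more carefully how TSSM and the Markovian property reduce everything to finite-range data transported by $\pi_v$; but the logical skeleton is the same.
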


\begin{proof}
Notice that, w.h.p. in $v \in V_n$, for every $\fx,\fy \in X^n$,
$$
\fx_{\sigma_n^F(v)}\fy_{\sigma_n^{MF \setminus F}}(v) \in X^n_{\sigma_n^{MF}(v)} \iff \pull_{v}^{\sigma_n}(\fx)_F \pull_{v}^{\sigma_n}(\fy)_{MF \setminus F} \in X_{MF}.
$$
%where $\fx_{\sigma_n^F(v)}\fy_{\sigma_n^{M^2F \setminus F}(v)}$ denotes the concatenation of $\fx_{\sigma_n^F(v)}$ and $\fy_{\sigma_n^{M^2F \setminus F}(v)}$, and $\pull_{v}^{\sigma_n}(\fx)_F \pull_{v}^{\sigma_n}(\fy)_{M^2F \setminus F}$ denotes the concatenation of $\pull_{v}^{\sigma_n}(\fx)_F$ and $\pull_{v}^{\sigma_n}(\fy)_{M^2F \setminus F}$. I

In addition, again w.h.p. in $v \in V_n$, by Lemma \ref{lemma31},
\begin{align*}
\fEn_{n,\sigma_n^F(v)}(\fx)	=	\sum_{u \in \sigma_n^F(v)} \phi(\pull^{\sigma_n}_u(\fx))	=	\sum_{g \in F} \phi(\pull^{\sigma_n}_{\sigma_n^g(v)}(\fx))	=	\sum_{g \in F} \phi(T^g\pull^{\sigma_n}_{v}(\fx))	=	\phi_F(\pull^{\sigma_n}_{v}(\fx)).
\end{align*}

Therefore, w.h.p. in $v \in V_n$, $Z_{n,\sigma_n^F(v)}(\fy) = Z_F(\pull_{v}^{\sigma_n}(\fy))$, and we conclude.
\end{proof}

%Suppose $A$ is a finite set. An element $\omega \in A^{\Gamma}$ is written as either a collection $\omega = \left(\omega_{g}\right)_{g \in \Gamma}$ or a function $\omega: \Gamma \rightarrow A$. Let $S=\left(S^{g}\right)_{g \in \Gamma}$ be the shift action on $A^{\Gamma}$ defined by $S^{g}\omega(h) = \omega(hg)$.
%
%Given $\sigma: \Gamma \to V$ and $\fx \in A^{V}$, the {\bf pullback name} of $\fx$ is $\pull_{v}^{\sigma}(\fx) \in A^{\Gamma}$ given by
%$$
%\pull_{v}^{\sigma}(\fx)(g) = \fx\left(\sigma^{g}v\right).
%$$

\subsection{Uniqueness and local weak* convergence}

An important question in statistical phys\-ics is whether an $(X,\phi)$-specification has a unique or multiple Gibbs measures. It is well-known that in the first case and under our assumptions on the support ---namely, the TSSM property---, uniqueness is characterized by a certain decay of correlation at the level of the specification (e.g., see \cite{3-weitz}). More specifically, if there is a unique Gibbs measure, then, for every $\epsilon > 0$, $r \in \N$, and $x, y_1,y_2 \in X$,
$$
|\gamma_{F_{r'}}([x_{F_r}] \vert y_1) - \gamma_{F_{r'}}([x_{F_r}] \vert y_2)| \leq \epsilon,
$$
for a sufficiently large $r' > r$, where $\gamma_F([x_F] \vert y)$ denotes the natural marginalization given by
$$
\gamma_F([x_F] \vert y) = \sum_{x' \in [x_F]} \gamma_F(x' \vert y).
$$

We have the following relationship between uniqueness and convergence in the local weak* sense.

\begin{proposition}
\label{lwconv}
Suppose that there is a unique Gibbs measure $\mu$ for $\phi$. Then, the sequence of derived Gibbs measures $\{\mu_n\}_n$  converges to $\mu$ in the local weak* sense.
\end{proposition}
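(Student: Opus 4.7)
The plan is to combine the DLR equations for both $\mu$ and the derived Gibbs measures $\mu_n$, use Lemma~\ref{lem:spec} to match the two specifications at good vertices, and then invoke the uniqueness-implied decay of correlations recalled just before the statement.

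More concretely, fix $r \in \N$ and $\epsilon > 0$. Using the equivalence of uniqueness with decay of correlations at the level of the specification $\gamma$, I would first pick $r' \geq r$ so that
$$
|\gamma_{F_{r'}}([x_{F_r}] \mid y_1) - \gamma_{F_{r'}}([x_{F_r}] \mid y_2)| \leq \frac{\epsilon}{2|A|^{|F_r|}}
$$
for every $x, y_1, y_2 \in X$. Then, since $\phi$ has finite range $M$, the specification $\gamma_{F_{r'}}(\cdot \mid y)$ is Markovian, so it depends only on $y_{\partial_M F_{r'}}$; this is crucial because it will let me evaluate $\gamma_{F_{r'}}(\cdot \mid \pull_v^{\sigma_n}(\fy))$ even though the pullback of an element of $X^n$ need not globally lie in $X$. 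I would then choose $r''$ large enough so that $\partial_M F_{r'} \subseteq F_{r''}$ and so that, by Lemma~\ref{lem:spec} and the good-vertex statistics, w.h.p.\ $v \in V_n$ satisfies both that $v$ is $F_{r''}$-good and that $\gamma^n_{\sigma_n^{F_{r'}}(v)}(\fx\mid\fy) = \gamma_{F_{r'}}(\pull_v^{\sigma_n}(\fx) \mid \pull_v^{\sigma_n}(\fy))$ for all $\fx,\fy \in X^n$.

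For such a \emph{good} $v$, the DLR equation for $\mu_n$ applied to $U = \sigma_n^{F_{r'}}(v)$, together with the identification from Lemma~\ref{lem:spec} and the Markov property of $\gamma$, yields
$$
\bigl((\pull_v^{\sigma_n,r})_{*}\mu_n\bigr)([x_{F_r}]) = \int \gamma_{F_{r'}}([x_{F_r}] \mid \pull_v^{\sigma_n}(\fy))\, d\mu_n(\fy),
$$
where the pullback name of $\fy \in X^n$ restricted to $\partial_M F_{r'}$ is locally admissible in $X$ (as $v$ is $F_{r''}$-good and $\fy \in X^n$), hence can be realized as $\tilde y|_{\partial_M F_{r'}}$ for some $\tilde y \in X$. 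In parallel, the DLR equation for $\mu$ gives $\mu([x_{F_r}]) = \int \gamma_{F_{r'}}([x_{F_r}] \mid y)\, d\mu(y)$. Subtracting, applying Fubini, and invoking the uniform bound on specifications across any two elements of $X$, I obtain
$$
\bigl|\bigl((\pull_v^{\sigma_n,r})_{*}\mu_n\bigr)([x_{F_r}]) - \mu([x_{F_r}])\bigr| \leq \frac{\epsilon}{2|A|^{|F_r|}}.
$$
Summing over $x_{F_r} \in A^{F_r}$ and dividing by two controls the total variation by $\epsilon/2$, so the set of $v$ violating the local weak* criterion is contained in the complement of the good set, which has vanishing density. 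The main technical obstacle I anticipate is the last point above: namely, justifying that $\gamma_{F_{r'}}(\cdot \mid \pull_v^{\sigma_n}(\fy))$ can be identified with $\gamma_{F_{r'}}(\cdot \mid \tilde y)$ for some $\tilde y \in X$, which requires extending a locally admissible boundary pattern to a global element of $X$ (granted by the TSSM property and Lemma~\ref{lem:extend}) and checking that the Markov property of $\gamma$ makes the specification insensitive to that extension.
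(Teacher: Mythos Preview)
Your proposal is correct and follows essentially the same route as the paper: both express $\mu([x_{F_r}])$ and $(\pull_v^{\sigma_n,r})_*\mu_n([x_{F_r}])$ as averages of $\gamma_{F_{r'}}([x_{F_r}]\mid \cdot)$ via the DLR equations, identify the derived specification with $\gamma$ at good vertices through Lemma~\ref{lem:spec}, and then invoke the uniqueness-induced uniform smallness of the oscillation of $\gamma_{F_{r'}}$. Your version is in fact more careful than the paper on the one point you flag as a potential obstacle---the paper writes $\gamma_{F_{r'}}(\cdot\mid \pull_v^{\sigma_n}(\fy))$ without explicitly justifying that the pullback boundary pattern extends to an element of $X$---whereas you spell out that the Markov property together with the TSSM extension (Lemma~\ref{lem:extend}) handles this; the remaining differences (tracking total variation via the factor $|A|^{|F_r|}$ versus the paper's pointwise $\epsilon/3$ bounds) are cosmetic.
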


\begin{proof}
Pick an arbitrary $r \in \N$ and $\epsilon > 0$. By uniqueness, there exists $r' > r$ such that for every $w \in X_{F_r}$ and for every $y_1,y_2 \in X$,
$$
|\gamma_{F_{r'}}([w] \vert y_1) - \gamma_{F_{r'}}([w] \vert y_1)| \leq \epsilon.
$$

In particular, for every $y \in X$, since $\mu([w])$ is an average of terms of the form $\gamma_{F_{r'}}([w] \vert y)$, we have that
$$
|\mu([w]) - \gamma_{F_{r'}}([w] \vert y)| \leq \frac{\epsilon}{3}.
$$

Similarly, by Lemma \ref{lem:spec}, w.h.p. in $v \in V_n$, for $\fx,\fy \in X^n$,
$$
\gamma^n_{\sigma_n^{F_{r'}}(v)}(\fx \vert \fy) = \gamma_{F_{r'}}(\pull_{v}^{\sigma_n}(\fx) \vert \pull_{v}^{\sigma_n}(\fy)),
$$
so
$$
\gamma^n_{\sigma_n^{F_{r'}}(v)}([\fx_{\sigma^{F_r}_n(v)}] \vert \fy) = \gamma_{F_{r'}}([\pull_{v}^{\sigma_n,r}(\fx)] \vert \pull_{v}^{\sigma_n}(\fy)),
$$
and
$$
|\mu_n([\fx_{\sigma^{F_r}_n(v)}]) - \gamma_{F_{r'}}([\pull_{v}^{\sigma_n,r}(\fx)]\vert \pull_{v}^{\sigma_n}(\fy))| \leq \frac{\epsilon}{3},
$$
or, in other words,
$$
|(\pull_{v}^{\sigma_n})_*\mu_n([\pull_{v}^{\sigma_n,r}(\fx)]) - \gamma_{F_{r'}}([\pull_{v}^{\sigma_n,r}(\fx)] \vert \pull_{v}^{\sigma_n}(\fy))| \leq \frac{\epsilon}{3}.
$$

Then, for any $w \in X_{F_r}$ and $\fx \in X^n$ such that $\pull^{\sigma_n,r}_v(\fx) = w$, we obtain that
\begin{align*}	
|\mu([w]) - (\pull_v^{\sigma_n,r})_*\mu_n([w])|	&	\leq |\gamma_{F_{r'}}([w] \vert y) - \gamma_{F_{r'}}([\pull_{v}^{\sigma_n,r}(\fx)] \vert \pull_{v}^{\sigma_n}(\fy))|  + \frac{2\epsilon}{3}	\\
							&	=		|\gamma_{F_{r'}}([w] \vert y) - \gamma_{F_{r'}}([w] \vert \pull_{v}^{\sigma_n}(\fy))| + \frac{2\epsilon}{3}\\
							&	\leq		\epsilon,
\end{align*}
w.h.p. in $v \in V_n$, and since $\epsilon$ was arbitrary, we conclude.

%\begin{align*}	
%|\mu([w]) - \pull^{\sigma_n}_\uparrow\mu_n([w])|	&	=		|\mu([w]) - \frac{1}{|V_n|} \sum_{v \in V_n} (\pull^{\sigma_n}_v)_*\mu_n([w])|	\\	
%							&	\leq		\frac{1}{|V_n|} \sum_{v \in V_n^{r'}} |\mu([w]) - (\pull^{\sigma_n}_v)_*\mu_n([w])| + 2\epsilon	\\
%							&	\leq	\frac{1}{|V_n|} \sum_{v \in V^{r'}_n} |\gamma_{B_{r'}}([w] \vert y) - \gamma_{B_{r'}}([\pull_{v}^{\sigma_n,r}(\fx)] \vert \pull_{v}^{\sigma_n}(\fy))|  + 4\epsilon	\\
%							&	=		\frac{1}{|V_n|} \sum_{v \in V^{r'}_n} |\gamma_{B_{r'}}([w] \vert y) - \gamma_{B_{r'}}([w] \vert \pull_{v}^{\sigma_n}(\fy))| + 4\epsilon\\
%							&	=		\frac{|V_n \setminus V_n^{r'}|}{|V_n|} + 5\epsilon\\
%							&	\leq		6\epsilon,
%\end{align*}
%and since $\epsilon$ was arbitrary, we conclude.
\end{proof}

\section{Sofic entropy of Gibbs measures}
\label{sec5}

%For any two partitions $\alpha, \beta$ and for any two $\sigma$-algebras $\mathcal{F}_{1}, \mathcal{F}_{2}$ with $\mathcal{F}_{1} \subseteq \mathcal{F}_{2}$
%$$
%H(\alpha \vee \beta)=H(\alpha)+H(\beta | \alpha) \quad \text{ and } \quad H\left(\alpha | \mathcal{F}_{2}\right) \leq H\left(\alpha | \mathcal{F}_{1}\right),
%$$
%with equality if and only if $\mu\left(A | \mathcal{F}_{2}\right)=\mu\left(A | \mathcal{F}_{1}\right)$ a.e. for every $A \in \alpha$. In particular $H(\alpha | \beta) \leq$
%$H(\alpha)$ and equality occurs iff $\alpha$ and $\beta$ are independent (i.e., $\forall A \in \alpha, B \in \beta, \mu(A \cap B)=\mu(A) \mu(B)$).

Consider the map $\pull^{\sigma_n}_\downarrow: \mathcal{C}(A^\Gamma) \to \mathcal{C}(A^{V_n})$ given by
$$
(\pull^{\sigma_n}_\downarrow(\phi))(\fx) :=  \frac{1}{|V_n|} \sum_{v \in V_n} \phi(\pull_{v}^{\sigma_n}(\fx)).
$$

Notice that the map $\pull^{\sigma_n}_\uparrow$ is conjugate to $\pull^{\sigma_n}_\downarrow$, i.e., for all $\nu_n \in \Prob(A^{V_n})$,
$$
\int{\phi(x)}d(\pull^{\sigma_n}_\uparrow(\nu_n))(x) = \int{(\pull^{\sigma_n}_\downarrow(\phi))(\fx)}d\nu_n(\fx).
$$

Indeed,
\begin{align*}
\int{\phi(x)}d(\pull^{\sigma_n}_\uparrow(\nu_n))(x)	&	=	 \frac{1}{|V_n|} \sum_{v \in V_n} \int_{X}{\phi(x)} d((\pull_{v}^{\sigma_n})_{*}(\nu_n))(x)	\\
								&	=	 \frac{1}{|V_n|} \sum_{v \in V_n} \sum_{\fx \in A^{V_n}} {\phi(\pull_{v}^{\sigma_n}(\fx))}\nu_n(\{\fx\})\\
								&	=	\sum_{\fx \in A^{V_n}}{(\pull^{\sigma_n}_\downarrow(\phi))(\fx)}\nu_n(\{\fx\})	\\
								&	=	\int{(\pull^{\sigma_n}_\downarrow(\phi))(\fx)}d\nu_n(\fx).
\end{align*}

Now, given $\nu$ in $\Prob(X,\Gamma)$, a positive integer $n$, and $\delta>0$, we define
$$
\hat{h}_{\Sigma, n, \delta}(\nu) := |V_{n}|^{-1} \sup \left\{H(\nu_n) | \nu_n \in \Appr_{\Sigma,n,\delta}(\nu)\right\},
$$
where $\Appr_{\Sigma, n, \delta}(\nu) := \{\nu_n \in \Prob(A^{V_n}): d(\pull^{\sigma_n}_\uparrow(\nu_n),\nu) < \delta\}$ and $H(\nu_n)$ is the Shannon entropy of $\nu_n$. If $\Appr_{\Sigma, n, \delta}(\nu)$ is empty, we set $\hat{h}_{\Sigma, n,\delta}(\nu)=-\infty$. We also consider
$$\hat{h}_{\Sigma, \delta}(\nu) := \limsup_{n \to \infty} \hat{h}_{n, \delta}(\nu)	\quad \text{ and }	\quad \hat{h}_{\Sigma}(\nu) := \inf_{\delta > 0} \hat{h}_{\delta}(\nu).
$$

The latter quantity is sometimes called {\bf modified sofic entropy} (see \cite{alpeev2015entropy}). It is always the case that $\hat{h}_{\Sigma}(\nu) \geq h_{\Sigma}(\Gamma \acts (X,\nu))$ and, if $\nu$ is ergodic, we have that $\hat{h}_{\Sigma}(\nu) = h_{\Sigma}(\Gamma \acts (X,\nu))$ (e.g., see \cite[Section 4]{bowen2011entropy}). The following results follow \cite{alpeev2015entropy} but considering the constrained case, a generalization that involves a more delicate control of entropy.
%
%\info{(see Sec. 4 of [2], Sec. 7 of [3], and Sec. 4 of [9]).} 
%\info{\textcolor{blue}{Include Miklos Abert - Benjy Weiss definition (of measure entropy?).}}

\begin{lemma}
\label{estimate}
For every $\nu_n \in \Prob(X^n)$,
$$
\frac{H(\nu_n)}{|V_n|} + \int_{X} \phi(x) d(\pull^{\sigma_n}_\uparrow(\nu_n))(x) \leq \frac{H(\mu_n)}{|V_n|} + \int_{X} \phi(x) d(\pull^{\sigma_n}_\uparrow(\mu_n))(x),
$$
where $\{\mu_n\}_n$ denotes the sequence of derived Gibbs measures.
\end{lemma}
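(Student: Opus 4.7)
The plan is to recognize the claimed inequality as the finite-volume variational principle, and to derive it from the nonnegativity of the Kullback-Leibler divergence $D(\nu_n \| \mu_n)$ applied to the explicit Gibbs form of $\mu_n$.

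First I would rewrite the integral term in a purely combinatorial way. Using the conjugacy
$$
\int \phi \, d(\pull^{\sigma_n}_\uparrow(\nu_n)) = \int \pull^{\sigma_n}_\downarrow(\phi) \, d\nu_n
$$
recalled at the beginning of Section \ref{sec5}, and the identity $\pull^{\sigma_n}_\downarrow(\phi)(\fx) = |V_n|^{-1}\fEn_n(\fx)$ (which is just the definition of the derived energy), the inequality to be proved becomes, after multiplying by $|V_n|$,
$$
H(\nu_n) + \sum_{\fx \in X^n}\nu_n(\fx)\fEn_n(\fx) \leq H(\mu_n) + \sum_{\fx \in X^n}\mu_n(\fx)\fEn_n(\fx),
$$
and the same reduction works for $\mu_n$.

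Next I would use the explicit formula $\mu_n(\fx) = \exp(\fEn_n(\fx))/Z_n$ on $X^n$. Taking logarithms gives $\log\mu_n(\fx) = \fEn_n(\fx) - \log Z_n$ for every $\fx \in X^n$. Plugging $\nu_n$ (which is supported on $X^n$ by hypothesis) into the Kullback-Leibler divergence yields
$$
D(\nu_n \,\|\, \mu_n) = -H(\nu_n) - \sum_{\fx\in X^n}\nu_n(\fx)\fEn_n(\fx) + \log Z_n \geq 0,
$$
while the same computation with $\nu_n$ replaced by $\mu_n$ gives $D(\mu_n\,\|\,\mu_n)=0$, i.e.\ $H(\mu_n) + \sum_{\fx\in X^n}\mu_n(\fx)\fEn_n(\fx) = \log Z_n$. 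Combining the two and dividing by $|V_n|$ produces exactly the stated inequality.

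There is no real obstacle here: the only subtlety is that the support of $\nu_n$ must lie in $X^n$ (so that $\log\mu_n(\fx)$ is finite $\nu_n$-a.e.), which is precisely the hypothesis $\nu_n \in \Prob(X^n)$. The rest is Gibbs' inequality, i.e.\ the nonnegativity of relative entropy on the finite set $X^n$.
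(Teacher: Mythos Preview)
Your proof is correct and, in fact, slightly cleaner than the paper's own argument. The paper proceeds by setting up the constrained optimization problem
\[
\max_{\nu_n \in \Prob(X^n)}\Big\{H(\nu_n) + \sum_{\fx \in X^n}\nu_n(\fx)\fEn_n(\fx)\Big\}
\]
and solving it with Lagrange multipliers, identifying $\mu_n$ as the unique critical point. Your route is to invoke the nonnegativity of the Kullback--Leibler divergence $D(\nu_n\|\mu_n)$ against the explicit Gibbs form of $\mu_n$, which yields the inequality in one line without any optimization machinery. The two arguments are of course closely related (Gibbs' inequality is what guarantees the Lagrange critical point is a global maximum), but your approach has the advantage of giving the inequality directly, whereas the paper's computation only locates a stationary point and implicitly relies on concavity to conclude it is a maximum. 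Both rely on the same essential hypothesis, namely that $\nu_n$ is supported on $X^n$ so that $\log\mu_n$ is finite $\nu_n$-almost everywhere.
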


\begin{proof}
Notice that, since
$$
\int_{X} \phi(x) d\left(\pull_*^{\sigma_n}(\nu_n)\right)(x) = \frac{1}{|V_{n}|} \sum_{v \in V_n} \sum_{\fx \in X^n} {\phi(\pull_{v}^{\sigma_n}(\fx))}\nu_n(\{\fx\}),
$$
we have that
\begin{eqnarray*}
&	&H(\nu_n) + |V_n| \int_{X} \phi(x) d\left(\pull_*^{\sigma_n}(\nu_n)\right)(x)	\\
&=	& |V_n| \sum_{\fx \in X^n} \left( -\nu_n(\{\fx\})\log \nu_n(\{\fx\}) + \sum_{v \in V_n} {\phi(\pull_{v}^{\sigma_n}(\fx))}\nu_n(\{\fx{}\})\right).
\end{eqnarray*}

Let's study
$$
\sum_{\fx \in X^n} \left( -\nu_n(\{\fx\})\log \nu_n(\{\fx\}) + \sum_{v \in V_n} {\phi(\pull_{v}^{\sigma_n}(\fx))}\nu_n(\{\fx\})\right).
$$
among all measures $\nu_n \in \Prob(X^n)$. Since $\sum_{\fx \in X^n} \nu_n(\{\fx\}) = 1$ and $\nu_n(\{\fx\}) \geq 0$, we can use the Lagrange multipliers method and optimize the functional
$$
f(\{p_{\fx}\}_\fx,\lambda) = \sum_{\fx \in X^n} \left( -p_{\fx} \log p_{\fx} + \sum_{v \in V_n} {\phi(\pull_{v}^{\sigma_n}(\fx))}p_{\fx}\right) - \lambda \cdot \left(\sum_{\fx \in X^n} p_{\fx} - 1\right).
$$

Notice that
$$
\frac{\partial f}{\partial p_{\fx}} =  -\log p_{\fx} - 1 + \sum_{v \in V_n} {\phi(\pull_{v}^{\sigma_n}(\fx))} - \lambda = 0.
$$

Therefore,
$$
p_{\fx} = \frac{\exp(\sum_{v \in V_n} {\phi(\pull_{v}^{\sigma_n}(\fx))})}{\exp(1+\lambda)}.
$$

Since $\sum_{\fx \in X^n} p_{\fx}  = 1$, we conclude that
$$
p_{\fx} = \frac{\exp(\fEn_n(\fx))}{Z_n} = \mu_n(\fx).
$$
\end{proof}

The following proposition is only relevant in the constrained case and it shows how the TSSM property is a key assumption in our work to construct measures in $\Prob(X^n)$ that approximate well in the local weak* sense any given measure in $\Prob(X)$.

\begin{proposition}
\label{prop:supp}
Suppose that $X$ satisfies the TSSM property. Then, for every $\delta > 0$ and for every $\nu \in \Prob(X)$, there exists $0 < \delta' < \delta$ and $n_0 \in \N$ such that for every $n \geq n_0$ and every $\nu_n \in \Prob(A^{V_n})$ with $d(\pull^{\sigma_n}_\uparrow(\nu_n), \nu) \leq \delta'$, there exists $\tilde{\nu}_n \in \Prob(X^n)$ such that $d(\pull^{\sigma_n}_\uparrow(\tilde{\nu}_n), \nu) \leq \delta$ and $|V_n|^{-1}|H(\nu_n) - H(\tilde{\nu}_n)| \leq f(\delta)$, where $f(\delta) \to 0$ as $\delta \to 0$.
\end{proposition}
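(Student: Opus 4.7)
The natural plan is to transport $\nu_n$ onto $X^n$ through a \emph{correction map}, and then control the change in Shannon entropy via the chain rule. Fix $r_0 \in \N$ such that $MM \subseteq F_{r_0}$. Using Lemma~\ref{lem:error} (and an arbitrary deterministic selection rule) I would choose a map $c \colon A^{V_n} \to X^n$ with the property that $c(\fx)$ agrees with $\fx$ on $V_n \setminus \sigma_n^{MM}(E(\fx))$ and $c(\fx) = \fx$ whenever $\fx \in X^n$. Then define $\tilde{\nu}_n := c_* \nu_n$, which is automatically supported on $X^n$.

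The first step is to control the expected number of errors. Since $\nu$ is supported on $X$, $\nu(A^{MM} \setminus X_{MM}) = 0$, and the definition of $d$ gives $\|\pull_\uparrow^{\sigma_n}(\nu_n)\vert_{F_{r_0}} - \nu\vert_{F_{r_0}}\|_{\mathrm{TV}} \leq 2^{r_0} \delta'$. Expanding the left-hand side as in Lemma~\ref{lem:Xn}, this translates into $\E_{\nu_n}[|E(\fx)|] \leq 2^{r_0} \delta' |V_n| + |V_n \setminus V_n^{MM}|$, which for $n$ large is of order $2^{r_0} \delta' |V_n|$. The local weak* estimate then follows by coupling $X \sim \nu_n$ with $c(X) \sim \tilde{\nu}_n$: the pullbacks $\pull_v^{\sigma_n, r}(X)$ and $\pull_v^{\sigma_n, r}(c(X))$ can differ only when $v \notin V_n^{MMF_r}$ or $\sigma_n^{F_r}(v) \cap \sigma_n^{MM}(E(X)) \neq \emptyset$. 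Reproducing the estimate leading to~\eqref{eq1} and summing with weights $2^{-r}$ (truncating the tail of $d$ past a large index), the triangle inequality against $d(\pull_\uparrow^{\sigma_n}(\nu_n), \nu) \leq \delta'$ yields $d(\pull_\uparrow^{\sigma_n}(\tilde{\nu}_n), \nu) \leq \delta$ provided $\delta'$ is small in terms of $\delta$ and $n \geq n_0$.

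The hardest step, and the real content of the proposition, is the entropy control. Let $X \sim \nu_n$, $Y = c(X) \sim \tilde{\nu}_n$, and $E = E(X)$. By the chain rule $|H(\nu_n) - H(\tilde{\nu}_n)| = H(X \mid Y)$, and introducing $E$ as an auxiliary variable gives $H(X \mid Y) \leq H(E) + H(X \mid Y, E)$. Since $X$ can only differ from $Y$ on $\sigma_n^{MM}(E)$, we obtain $H(X \mid Y, E) \leq |M|^2 \log |A| \cdot \E[|E|]$. For $H(E)$, letting $K = |E|$, I bound $H(E) \leq H(K) + \E[\log \binom{|V_n|}{K}]$; Stirling together with concavity of $t \mapsto H(t, 1-t)$ and Jensen's inequality applied to the expectation bound from the first step yield $H(E) \leq |V_n| \, H(2^{r_0} \delta', 1 - 2^{r_0} \delta') + O(\log |V_n|)$. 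Dividing by $|V_n|$ and choosing $\delta'$ small enough in terms of $\delta$ (and $n_0$ large so the logarithmic correction is negligible) produces the desired function $f(\delta) \to 0$. The main obstacle is precisely that the correction region $\sigma_n^{MM}(E(\fx))$ depends on $\fx$, so the auxiliary variable $E$ must be controlled only in expectation and through concavity, rather than pointwise.
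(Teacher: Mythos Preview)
Your proposal is correct and follows the paper's overall plan (push $\nu_n$ forward through the correction map from Lemma~\ref{lem:error}, then separately control the distance and the entropy defect), but your entropy argument is genuinely different from the paper's and in fact cleaner.

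For the distance estimate, both you and the paper do essentially the same thing: bound the $\nu_n$-expected error count by the total variation discrepancy on $F_{r_0}$, then repeat the computation behind~\eqref{eq1} with weights. The paper avoids your sum over $r$ by first fixing a single $r_0$ and $\epsilon_0$ with $O(r_0,\epsilon_0,\nu)\subseteq B_d(\nu,\delta)$, so that controlling only the $F_{r_0}$-marginal already forces $d$-closeness; this is slightly simpler than your truncation, but the content is the same.

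For the entropy bound the two arguments diverge. The paper works combinatorially: it writes $\nu_n=\sum_i\alpha_i\delta_{\fx_i}$, applies the grouping property to compare $H(\alpha_1,\dots,\alpha_{k_n})$ with $H(C_1,\dots,C_{q_n})$, and then handles each fibre of the correction map by a Markov-inequality split into ``small error'' indices (at most $\binom{|V_n|}{\delta^{1/2}|V_n|}|A|^{\delta^{1/2}|V_n|}$ of them) and ``large error'' indices (total weight $\leq\delta^{1/2}$). Your approach is information-theoretic: the identity $H(\nu_n)-H(\tilde\nu_n)=H(X\mid Y)$ (valid because $Y=c(X)$ is a function of $X$), the auxiliary variable $E=E(X)$, and the chain rule $H(X\mid Y)\leq H(E)+H(X\mid Y,E)$ dispose of the problem without any case analysis. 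The bound $H(X\mid Y,E)\leq |M|^2\log|A|\cdot\E[|E|]$ is immediate since $X$ and $Y$ agree off $\sigma_n^{MM}(E)$, and your treatment of $H(E)$ via $H(K)+\E\log\binom{|V_n|}{K}$ together with Jensen for the concave function $t\mapsto H(t,1-t)$ is correct. The resulting $f(\delta)$ has the same order as the paper's $H(\delta^{1/2},1-\delta^{1/2})+2\delta^{1/2}\log|A|$. Your route is shorter and arguably more transparent; the paper's route is more elementary in that it uses only the grouping property and avoids conditional entropy of random variables.
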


\begin{proof}
Given $\delta > 0$ and $\nu \in \Prob(X)$, let $r_0 \in \N$ and $0 < \epsilon_0 < \delta$ be such that
$$
MM \subseteq F_{r_0} \quad	\text{ and }	\quad	O(r_0,\epsilon_0,\nu) \subseteq B_d(\nu,\delta).
$$

Next, take $n_0 \in \N$ such that $|V_n \setminus V_n^{MMF_{r_0}}| \leq \frac{\epsilon_0}{4} |V_n|$ for all $n \geq n_0$ and consider $\delta' > 0$ such that 
$$
B_d(\nu,\delta') \subseteq O\left(r_0,\frac{\epsilon_0}{8|M|^2|F_{r_0}|},\nu\right).
$$ 

Given $\nu_n \in \Prob(A^{V_n})$, there exist $\{\fx^i_n\}_{i=1}^{k_n}$ in $A^{V_n}$ and weights $\{\alpha^n_i\}_{i=1}^{k_n}$ such that $\nu_n = \sum_{i=1}^{k_n} \alpha^n_i \delta_{\fx^i_n}$, with $0 \leq \alpha^n_i \leq 1$ and $ \sum_{i=1}^{k_n} \alpha^n_i = 1$. Then, it follows that, $\pull^{\sigma_n}_\uparrow(\nu_n) = \sum_{i=1}^{k_n} \alpha^n_i P^{\sigma_n}_{\fx_n^i}$. Notice that, due to a similar calculation done in Lemma \ref{lem:Xn},
\begin{align*}
\|\pull^{\sigma_n}_\uparrow(\nu_n)\vert_{MM} - \nu\vert_{MM}\|_{\mathrm{TV}}	&	\geq		\frac{1}{2}\sum_{w\in A^{MM} \setminus X_{MM}} \sum_{i=1}^{k_n} \alpha^n_i P^{\sigma_n}_{\fx_n^i}([w])	\\
															&	=	\frac{1}{2|V_n|} \sum_{i=1}^{k_n} \alpha^n_i \sum_{v \in V_n} \sum_{w\in A^{MM} \setminus X_{MM}}   \delta_{\pull_{v}^{\sigma_n}(\fx_n^i)}([w])	\\
															&	=	\frac{1}{2|V_n|} \sum_{i=1}^{k_n} \alpha^n_i |E(\fx_n^i)|,
\end{align*}
so
$$
\sum_{i=1}^{k_n} \alpha^n_i |E(x_n^i)| \leq 2|V_n|\|\pull^{\sigma_n}_\uparrow(\nu_n)\vert_{MM} - \nu\vert_{MM}\|_{\mathrm{TV}} \leq 2|V_n|\|\pull^{\sigma_n}_\uparrow(\nu_n)\vert_{F_{r_0}} - \nu\vert_{F_{r_0}}\|_{\mathrm{TV}}.
$$

Now, for every $\fx_n^i$, consider the corrected version $\fy_n^i \in X^n$ provided by Lemma \ref{lem:error}, and define $\tilde{\nu}_n \in \Prob(X^n)$ as $\sum_{i=1}^{k_n} \alpha^n_i \delta_{\fy^i_n}$. Suppose that $d(\pull^{\sigma_n}_\uparrow(\nu_n), \nu) \leq \delta'$, so $\pull^{\sigma_n}_\uparrow(\nu_n) \in O\left(M,\frac{\epsilon_0}{8|M|^2|F_{r_0}|},\nu\right)$. Then, again as in Lemma \ref{lem:Xn},
\begin{align*}
\|\pull^{\sigma_n}_\uparrow(\nu_n)\vert_{F_{r_0}} - \pull^{\sigma_n}_\uparrow(\tilde{\nu}_n)\vert_{F_{r_0}}\|_{\mathrm{TV}}	&	\leq	\sum_{i=1}^{k_n} \alpha^n_i \|P^{\sigma_n}_{\fx_n^i}\vert_{F_{r_0}} - P^{\sigma_n}_{\fy_n^i}\vert_{F_{r_0}}\|_{\mathrm{TV}}	\\
																			&	\leq	\sum_{i=1}^{k_n} \alpha^n_i \frac{1}{|V_n|} (|V_n \setminus V^{MMF_{r_0}}_n| + |M|^2|F_{r_0}||E(\fx_n^i)|)	\\
																			&	=	\frac{|V_n \setminus V^{MMF_{r_0}}_n|}{|V_n|} + \frac{|M|^2|F_{r_0}|}{|V_n|} \sum_{i=1}^{k_n} \alpha^n_i  |E(\fx_n^i)|	\\
																			&	\leq	\frac{\epsilon_0}{4} + 2|M|^2|F_{r_0}|\|\pull^{\sigma_n}_\uparrow(\nu_n)\vert_{F_{r_0}} - \nu\vert_{F_{r_0}}\|_{\mathrm{TV}}	\\
																			&	\leq	\frac{\epsilon_0}{4} + \frac{\epsilon_0}{4} = \frac{\epsilon_0}{2}.
\end{align*}

Therefore,
\begin{align*}
\|\pull^{\sigma_n}_\uparrow(\tilde{\nu}_n)\vert_{F_{r_0}} - \nu\vert_{F_{r_0}}\|_{\mathrm{TV}}	&	\leq	\|\pull^{\sigma_n}_\uparrow(\tilde{\nu}_n)\vert_{F_{r_0}} - \pull^{\sigma_n}_\uparrow(\nu_n)\vert_{F_{r_0}}\|_{\mathrm{TV}} + \|\pull^{\sigma_n}_\uparrow(\nu_n)\vert_{F_{r_0}} - \nu\vert_{F_{r_0}}\|_{\mathrm{TV}}	\\
&	\leq	\frac{\epsilon_0}{2} + \frac{\epsilon_0}{2} = \epsilon_0,
\end{align*}
i.e., $\pull^{\sigma_n}_\uparrow(\tilde{\nu}_n) \in O(r_0,\epsilon_0,\pull^{\sigma_n}_\uparrow(\nu_n)) \subseteq B_d(\nu,\delta)$, so $d(\pull^{\sigma_n}_\uparrow(\tilde{\nu}_n),\nu) < \delta$.

Now, without loss of generality, $\fx^i_n \neq \fx^j_n$ for all $i \neq j$. Then, we have that $H(\nu_n) = H(\alpha^n_1,\dots,\alpha^n_{k_n})$. We want to estimate $H(\tilde{\nu}_n)$. First, notice that it could be the case that $\fy^i_n = \fy^j_n$ for $i \neq j$, which would cause a drop in the entropy.

Since $\epsilon_ 0 < \delta$, we have that $\sum_{i=1}^{k_n} \alpha^n_i |E(\fx_n^i)| \leq \delta|V_n|$. Without loss of generality, we can suppose that there are $q_n$ points $\{\fy_n^{j}\}_{j=1}^{q_n}$ in $X^n$ and indices $1 = \ell_1 < \ell_2 < \dots < \ell_{q_n} < \ell_{q_n+1} = k_n + 1$ such that for every $1 \leq j \leq q_n$ and $\ell_j \leq i < \ell_{j+1}$, the correction of the point $\fx_n^i$ is the point $\fy_n^{j}$. If we define $C^n_j = \sum_{i=\ell_j}^{\ell_{j+1}-1} \alpha^n_i$ for $1 \leq j \leq q_n$, we have that $\tilde{\nu} = \sum_{j=1}^{q_n} C^n_{j} \fy_n^{j}$. Then, by the grouping property of Shannon entropy, we have that
$$
H(\alpha^n_1,\dots,\alpha^n_{k_n})	=	H(C^n_1, \dots, C^n_{q_n}) + \sum_{j=1}^{q_n} C^n_j \cdot H\left(\frac{\alpha^n_{\ell_j}}{C^n_j},\dots,\frac{\alpha^n_{\ell_{j+1}-1}}{C^n_j}\right).
$$

Now, define $L_\delta = \{1 \leq i \leq k_n: |E(x^i_n)| > \delta^{1/2} |V_n|\}$, which we regard as the set of indices of points with ``large errors''. Then,
$$
\delta |V_n| \geq \sum_{i \in L_\delta} \alpha^n_i |E(x^i_n)| \geq \delta^{1/2} |V_n| \sum_{i \in L_\delta} \alpha^n_i,
$$
so $\sum_{i \in L_\delta} \alpha^n_i \leq \delta^{1/2}$. On the other hand, for $1 \leq j \leq q_n$, there are at most ${|V_n| \choose \delta^{1/2} |V_n|} |A|^{\delta^{1/2} |V_n|}$ different points $\fx_n^i$ with $i \notin L_\delta$ whose correction is the point $\fy_n^j$. Therefore, for every $j$, after reordering the coefficients $\alpha_i^n$ if necessary, we can suppose that there exists $\ell_j  \leq t_j < \ell_{j+1}$ such that 
$$
t_j - \ell_j + 1 < {|V_n| \choose \delta^{1/2} |V_n|} |A|^{\delta^{1/2} |V_n|} \quad \text{ and } \quad \sum_{i = t_j+1}^{\ell_{j+1}-1} \alpha^n_i \leq \delta^{1/2}.
$$

Then, again by the grouping property of Shannon entropy and since the uniform distribution maximizes it,
\begin{align*}
\begin{array}{lll}
&		&	H\left(\frac{\alpha^n_{\ell_j}}{C^n_j},\dots,\frac{\alpha^n_{\ell_{j+1}-1}}{C^n_j}\right)	\\
&	\leq	&	H\left(\frac{\alpha^n_{\ell_j}}{C^n_j},\dots,\frac{\alpha^n_{t_j}}{C^n_j}, \sum_{i=t_j + 1}^{\ell_{j+1}-1}\frac{\alpha^n_i}{C^n_j}\right) + \left(\sum_{i=t_j + 1}^{\ell_{j+1}-1}\frac{\alpha^n_i}{C^n_j}\right) \cdot H\left(\frac{\alpha^n_{t_j+1}}{C^n_j},\dots,\frac{\alpha^n_{\ell_{j+1}-1}}{C^n_j}\right)	\\
&	\leq	&	\log(t_j - \ell_j + 1) + \frac{\delta^{1/2}}{C^n_j} \log |A|^{|V_n|}	\\
&	\leq	&	\log({|V_n| \choose \delta^{1/2} |V_n|} |A|^{\delta^{1/2} |V_n|}) + \frac{\delta^{1/2}}{C^n_j}{|V_n| \log |A|}\\
&	\leq	&	|V_n|H(\delta^{1/2}, 1-\delta^{1/2}) + \delta^{1/2}|V_n|\log |A|(1 + \frac{1}{C^n_j}) + o(|V_n|),
\end{array}
\end{align*}
where in the last inequality we have used Stirling's approximation as in Theorem \ref{thm:partition}. Therefore, we have that
\begin{align*}
\begin{array}{lll}
&		&	|H(\nu_n) - H(\tilde{\nu}_n)|	\\
&	=	&	H(\alpha^n_1,\dots,\alpha^n_{k_n}) - H(C^n_1, \dots, C^n_{q_n})	\\
&	=	&	\sum_{j=1}^{q_n} C^n_j \cdot H\left(\frac{\alpha^n_{\ell_j}}{C^n_j},\dots,\frac{\alpha^n_{\ell_{j+1}-1}}{C^n_j}\right)	\\
&	\leq	&	\sum_{j=1}^{q_n} C^n_j \left( |V_n|H(\delta^{1/2}, 1-\delta^{1/2}) + \delta^{1/2}|V_n|\log |A|(1 + \frac{1}{C^n_j}) + o(|V_n|)\right)	\\
&	\leq	&	|V_n|H(\delta^{1/2}, 1-\delta^{1/2}) + 2\delta^{1/2}|V_n|\log |A| + o(|V_n|),
\end{array}
\end{align*}
so
$$
|V_n|^{-1}|H(\nu_n) - H(\tilde{\nu}_n)| \leq	 H(\delta^{1/2}, 1-\delta^{1/2}) + 2\delta^{1/2}\log |A| + o(1) =: f(\delta).
$$

Since $f(\delta) \to 0$ as $\delta \to 0$, we conclude.
\end{proof}

It is well-known that if $\mu$ is the unique Gibbs measure for $\phi$, then $\mu$ is ergodic (see \cite[Lemma 1]{alpeev2015entropy} and \cite[Corollary 7.4]{georgii2011gibbs}). Considering this, we have the following theorem.

\begin{theorem}
\label{thm:entropy}
If $X$ satisfies the TSSM property and there is a unique Gibbs measure $\mu$ for $\phi$, then,
$$
 h_{\Sigma}(\Gamma \acts (X,\mu)) = \limsup_n \frac{H(\mu_n)}{|V_n|}.
$$
\end{theorem}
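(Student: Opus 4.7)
The plan is to establish both inequalities separately, using from the outset that the unique Gibbs measure $\mu$ is ergodic (as noted just before the statement), so $h_\Sigma(\Gamma \acts (X,\mu)) = \hat{h}_\Sigma(\mu)$ and it suffices to work throughout with the modified sofic entropy.

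For the upper bound $\limsup_n |V_n|^{-1} H(\mu_n) \leq h_\Sigma(\Gamma \acts (X,\mu))$, I would apply Proposition \ref{lwconv}: the derived Gibbs measures $\{\mu_n\}_n$ converge to $\mu$ in the local weak$^*$ sense, so $\pull^{\sigma_n}_\uparrow(\mu_n) \to \mu$ in the $d$-metric. Fixing any $\delta > 0$, for all sufficiently large $n$ we have $\mu_n \in \Appr_{\Sigma,n,\delta}(\mu)$, hence $|V_n|^{-1} H(\mu_n) \leq \hat{h}_{\Sigma,n,\delta}(\mu)$. Taking $\limsup_n$ and then $\inf_{\delta > 0}$ yields the desired bound immediately.

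For the reverse inequality, fix $\delta > 0$ and let $\delta' \in (0,\delta)$ and $n_0$ be the constants provided by Proposition \ref{prop:supp} applied to $\nu = \mu$. Given any $\nu_n \in \Appr_{\Sigma,n,\delta'}(\mu)$ with $n \geq n_0$, that proposition produces a corrected measure $\tilde{\nu}_n \in \Prob(X^n)$ with $d(\pull^{\sigma_n}_\uparrow(\tilde{\nu}_n), \mu) \leq \delta$ and $|V_n|^{-1} |H(\nu_n) - H(\tilde{\nu}_n)| \leq f(\delta')$. Since $\tilde{\nu}_n$ is supported on $X^n$, Lemma \ref{estimate} yields
\[
\frac{H(\tilde{\nu}_n)}{|V_n|} + \int \phi\, d(\pull^{\sigma_n}_\uparrow(\tilde{\nu}_n)) \leq \frac{H(\mu_n)}{|V_n|} + \int \phi\, d(\pull^{\sigma_n}_\uparrow(\mu_n)).
\]
Because $\phi$ has finite range, $\eta \mapsto \int \phi\, d\eta$ is continuous in the $d$-topology, and both $\pull^{\sigma_n}_\uparrow(\tilde{\nu}_n)$ and $\pull^{\sigma_n}_\uparrow(\mu_n)$ lie in an arbitrarily small $d$-neighborhood of $\mu$ as $n \to \infty$ and $\delta \to 0$, so the difference of integrals is bounded by some $g(\delta)$ with $g(\delta) \to 0$. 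Combining, $|V_n|^{-1} H(\nu_n) \leq |V_n|^{-1} H(\mu_n) + f(\delta') + g(\delta)$ for all large $n$; taking the supremum over $\nu_n \in \Appr_{\Sigma,n,\delta'}(\mu)$, then $\limsup_n$, and finally $\delta \to 0$ gives $\hat{h}_\Sigma(\mu) \leq \limsup_n |V_n|^{-1} H(\mu_n)$.

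The main obstacle is the lower bound, specifically the step of replacing an arbitrary approximating measure $\nu_n \in \Appr_{\Sigma,n,\delta'}(\mu)$ on $A^{V_n}$ (whose support need not lie in $X^n$) by a measure in $\Prob(X^n)$ without substantial loss in either local statistics or Shannon entropy, so that the variational inequality of Lemma \ref{estimate} applies. This replacement is precisely the content of Proposition \ref{prop:supp}, and the TSSM hypothesis enters the argument here through Lemma \ref{lem:error}; in the unconstrained setting of \cite{alpeev2017random} this step is trivial, which is why the proper-subshift case demands the more careful entropy bookkeeping built into Proposition \ref{prop:supp}.
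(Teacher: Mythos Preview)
Your proposal is correct and follows essentially the same route as the paper: reduce to the modified sofic entropy via ergodicity, obtain one inequality directly from Proposition~\ref{lwconv} (so that $\mu_n\in\Appr_{\Sigma,n,\delta}(\mu)$ for large $n$), and for the other inequality push an arbitrary $\nu_n\in\Appr_{\Sigma,n,\delta'}(\mu)$ into $\Prob(X^n)$ via Proposition~\ref{prop:supp}, apply the finite variational inequality of Lemma~\ref{estimate}, and control the integral term by continuity of $\eta\mapsto\int\phi\,d\eta$ on a $2\delta$-ball around $\mu$. One cosmetic slip: Proposition~\ref{prop:supp} gives the entropy discrepancy bound $f(\delta)$, not $f(\delta')$; since $\delta\to 0$ in the end this is immaterial.
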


\begin{proof}
First, let's prove that $h_{\Sigma}(\Gamma \acts (X,\mu)) \geq \limsup_n \frac{H(\mu_n)}{|V_n|}$. By uniqueness, $\mu$ is ergodic,  and we have that $h_{\Sigma}(\Gamma \acts (X,\mu)) = \hat{h}_\Sigma(\mu)$. Then, it suffices to prove that for small $\delta > 0$,
$$
\hat{h}_{\Sigma,\delta}(\mu) \geq \limsup_n \frac{H(\mu_n)}{|V_n|},
$$
but this follows from the fact that, due to Proposition \ref{lwconv}, $\{\mu_n\}_n$ locally weak* converges to $\mu$, and for sufficiently large $n$, we have that $d(\pull^{\sigma_n}_\uparrow(\mu_n),\mu) < \delta$, so $\mu_n \in \Appr_{\Sigma, n, \delta}(\mu)$ and $\hat{h}_{\Sigma, n, \delta}(\mu) \geq \frac{H(\mu_n)}{|V_{n}|}$. Taking limit superior in $n$, we conclude.

Now, let's prove that for every $\epsilon>0$, there exists $\delta > 0$ and a positive integer $n_{0}$ such that for every $n \geq n_0$, we have
$$
\frac{H(\nu_n)}{|V_n|} \leq \limsup_n \frac{H(\mu_n)}{|V_n|} + \epsilon
$$
for any $\nu_n \in \Appr_{\Sigma, n, \delta}(\mu)$. In order to do this, choose $\delta > 0$ such that $f(\delta) \leq \frac{\epsilon}{4}$ and for any $\nu^1, \nu^2 \in \Prob(A^\Gamma)$ with $d(\nu^1, \nu^2) \leq 2 \delta$, the inequality
$$
\left|\int \phi(y) d\nu^1 - \int \phi(y) d\nu^2\right| \leq \frac{\epsilon}{4}
$$
holds. Notice that, since $\|\phi\|$ is finite, such $\delta$ exists. By Proposition \ref{prop:supp}, there exists $0 < \delta' < \delta$ and $n_0$ such that, for all $n \geq n_0$, if $\nu_n \in \Appr_{\Sigma, n, \delta'}(\mu)$, then we can find $\tilde{\nu}_n \in \Prob(X^n)$ such that $\tilde{\nu}_n \in \Appr_{\Sigma, n, \delta}(\mu)$ and $\frac{H(\nu_n)}{|V_n|} \leq \frac{H(\tilde{\nu}_n)}{|V_n|} + f(\delta) \leq \frac{H(\tilde{\nu}_n)}{|V_n|} + \frac{\epsilon}{5}$.

Consider $n \geq n_{0}$ such that $d(\pull^{\sigma_n}_\uparrow(\mu_n),\mu) < \delta$ and $\frac{H(\mu_n)}{|V_n|} \leq \limsup_k \frac{H(\mu_k)}{|V_k|} + \frac{\epsilon}{5}$. Pick $\nu_n \in \Appr_{\Sigma, n, \delta'}(\mu)$ such that $\hat{h}_{\Sigma, n,\delta'}(\mu) \leq \frac{H(\nu_n)}{|V_n|} + \frac{\epsilon}{5}$. By Lemma \ref{estimate}, for any $\tilde{\nu}_n \in \Prob(X^n)$, 
$$
\frac{H(\tilde{\nu}_n)}{|V_n|} + \int_X \phi(x) d(\pull^{\sigma_n}_\uparrow(\tilde{\nu}_n))(x)	\leq	\frac{H(\mu_n)}{|V_n|} + \int_X \phi(x) d(\pull^{\sigma_n}_\uparrow(\mu_n))(x)	
$$
and, since $d(\pull^{\sigma_n}_\uparrow(\tilde{\nu}_n),\pull^{\sigma_n}_\uparrow(\mu_n)) \leq 2 \delta$, it follows that
\begin{align*}
\frac{H(\tilde{\nu}_n)}{|V_n|}	\leq	\frac{H(\mu_n)}{|V_n|} + \int_{X} \phi(x) d(\pull^{\sigma_n}_\uparrow(\mu_n))(y)-\int_{X} \phi(x) d(\pull^{\sigma_n}_\uparrow(\tilde{\nu}_n))(x) 	\leq	\frac{H(\mu_n)}{|V_n|} + \frac{\epsilon}{5}.
\end{align*}

Then, combining all these inequalities, we obtain that,
\begin{align*}
\hat{h}_{\Sigma, n,\delta'}(\mu)	\leq	\frac{H(\nu_n)}{|V_n|} + \frac{\epsilon}{5} \leq	\frac{H(\tilde{\nu}_n)}{|V_n|} +  \frac{2\epsilon}{5} \leq	\frac{H(\mu_n)}{|V_n|} + f(\delta) +  \frac{3\epsilon}{5} \leq	\limsup_k \frac{H(\mu_k)}{|V_k|} + \epsilon.
\end{align*}

Taking limit superior in $n$, we get that
$$
\hat{h}_{\Sigma}(\mu) \leq \hat{h}_{\Sigma,\delta'}(\mu) = \limsup_n \hat{h}_{\Sigma,n,\delta'}(\mu) \leq \limsup_k \frac{H(\mu_k)}{|V_k|} + \epsilon.
$$
and since $\epsilon$ was arbitrary, we conclude.
\end{proof}

%\subsection{Gibbs measures as equilibrium states}

\begin{corollary}
\label{cor:eq}
If $X$ satisfies the TSSM property and $\mu$ is the unique Gibbs measure for $\phi$, then $\mu$ is an equilibrium state for $\phi$.
\end{corollary}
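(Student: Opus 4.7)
The plan is to verify the variational-principle equality
\[
p_\Sigma(\Gamma \acts X, \phi) = h_\Sigma(\Gamma \acts (X,\mu)) + \int \phi \, d\mu
\]
by passing to the limit in an exact identity available at every finite level $n$. Before starting, I would note that $\mu$ is automatically $\Gamma$-invariant: the $(X,\phi)$-specification is translation equivariant because both $X$ and $\phi$ are, so the set of Gibbs measures is $\Gamma$-invariant and uniqueness forces $\mu \in \Prob(X,\Gamma)$, which makes it a candidate in Chung's variational principle.

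First, I would use the explicit definition of the derived Gibbs measure to observe that $-\log \mu_n(\fx) = \log Z_n - \fEn_n(\fx)$ for every $\fx \in X^n$. Integrating against $\mu_n$ and dividing by $|V_n|$ yields the Helmholtz-type identity
\[
\frac{\log Z_n}{|V_n|} = \frac{H(\mu_n)}{|V_n|} + \int_X \phi \, d(\pull_\uparrow^{\sigma_n}(\mu_n)),
\]
where the last term arises from the conjugation between $\pull_\uparrow^{\sigma_n}$ and $\pull_\downarrow^{\sigma_n}$ noted at the start of Section \ref{sec5}.

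Next, I would invoke Proposition \ref{lwconv} to conclude that $\{\mu_n\}_n$ converges to $\mu$ in the local weak* sense, and hence $\{\pull_\uparrow^{\sigma_n}(\mu_n)\}_n$ weak* converges to $\mu$ in $\Prob(A^\Gamma)$. Continuity of $\phi$ then gives $\int \phi \, d(\pull_\uparrow^{\sigma_n}(\mu_n)) \to \int \phi \, d\mu$. Because this last sequence is convergent, taking limit superior of both sides of the identity commutes with addition, producing
\[
\limsup_n \frac{\log Z_n}{|V_n|} = \limsup_n \frac{H(\mu_n)}{|V_n|} + \int \phi \, d\mu.
\]

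Finally, I would apply \thmref{thm:partition} to identify the left-hand side with $p_\Sigma(\Gamma \acts X, \phi)$, and \thmref{thm:entropy} to identify the entropy term on the right-hand side with $h_\Sigma(\Gamma \acts (X,\mu))$. This yields the variational equality and hence the corollary. I do not anticipate a serious obstacle: everything reduces to assembling results from the three preceding sections together with the elementary identity that turns the logarithm of a Gibbs partition function into free energy, the only mild point being that local weak* convergence is genuinely needed to ensure the integral term passes to the limit.
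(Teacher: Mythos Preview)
Your proposal is correct and follows essentially the same route as the paper: derive the finite-level identity $\frac{\log Z_n}{|V_n|} = \frac{H(\mu_n)}{|V_n|} + \int \phi\, d(\pull_\uparrow^{\sigma_n}(\mu_n))$, pass to the limit using Proposition~\ref{lwconv} for the integral term, and then identify the two sides via Theorem~\ref{thm:partition} and Theorem~\ref{thm:entropy}. Your explicit remark on the $\Gamma$-invariance of $\mu$ is a small addition the paper leaves implicit.
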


\begin{proof}
By definition of $Z_n$, for every $\fx \in X^n$,
$$
\log Z_n = -\log \mu_n(\fx) + \fEn_n(\fx).
$$

Integrating against $\mu_n$, we have that
\begin{align*}
\log Z_n	&	=	\int{(-\log \mu_n(\fx) + \fEn_n(\fx))}d\mu_n	\\	
		&	=	H(\mu_n) + \int{\fEn_n(\fx)}d\mu_n(\fx)	\\
		&	=	H(\mu_n) + \sum_{v \in V_n} \int{\phi(\pull^{\sigma_n}_v(\fx))}d\mu_n(\fx)	\\
		&	=	H(\mu_n) + |V_n|\int{\phi(x)}d(\pull^{\sigma_n}_\uparrow(\mu_n))(x),
\end{align*}
so
$$
\frac{H(\mu_n)}{|V_n|} = \log \frac{Z_n}{|V_n|} - \int{\phi(x)}d(\pull^{\sigma_n}_\uparrow(\mu_n))(x).
$$

By Theorem \ref{thm:partition} and Proposition \ref{lwconv},
\begin{align*}
\limsup_n \frac{H(\mu_n)}{|V_n|}	&	=	\limsup_n \log \frac{Z_n}{|V_n|} - \lim_n \int{\phi(x)}d(\pull^{\sigma_n}_\uparrow(\mu_n))(x)	\\
							&	=	p_{\Sigma}(\Gamma \acts X, \phi)  - \int{\phi(x)}d\mu(x),
\end{align*}
and, by Theorem \ref{thm:entropy} and ergodicity of $\mu$, we obtain that
\begin{align*}
h_{\Sigma}(\Gamma \acts (X,\mu))		=	\limsup_n \frac{H(\mu_n)}{|V_n|}	
								=	p_{\Sigma}(\Gamma \acts X, \phi)  - \int{\phi}d\mu,
\end{align*}
i.e.,
$$
p_\Sigma(X,\phi) = h_{\Sigma}(\Gamma \acts (X,\mu)) + \int{\phi}d\mu.
$$

Therefore, $\mu$ is an equilibrium state.

%
%$$
%\lim_n \frac{H(\mu_n)}{|V_n|}
%$$
%
%By Theorem \ref{thm:entropy} and ergodicity of $\mu$, by Theorem \ref{thm:partition}, and by Proposition \ref{lwconv},
%\begin{align*}
%h_{\Sigma}(\Gamma \acts (X,\mu))	&	=	\limsup_n \frac{H(\mu_n)}{|V_n|}	\\
%							&	=	\limsup_n \left(\log \frac{Z_n}{|V_n|} - \int{\phi(x)}d(\pull^{\sigma_n}_\uparrow(\mu_n))(x)\right)	\\
%							&	=	\lim_n \log \frac{Z_n}{|V_n|} - \lim_n \int{\phi(x)}d(\pull^{\sigma_n}_\uparrow(\mu_n))(x) 	\\
%							&	=	p_{\Sigma}(\Gamma \acts X, \phi)  - \int{\phi(x)}d\mu(x) 	\\
%\end{align*}
%
%By the variational principle,
%$$
%p_{\Sigma}(\Gamma \acts X, \phi) = \sup_{\nu \in \Prob(X,\Gamma)} \left\{h_{\Sigma}(\Gamma \acts (X,\nu)) + \int{\phi}d\nu\right\}.
%$$
%
%By Theorem \ref{thm:partition},
%$$
%p_{\Sigma}(\Gamma \acts X, \phi) = \limsup_n |V_n|^{-1} \log Z_n
%$$
%and
%
%
%
%
%Taking $\limsup_n$, and since $\pull^{\sigma_n}_\uparrow(\mu_n)$ weak* converges to $\mu$, we obtain that

\end{proof}

%\info{\textcolor{red}{Question: When can we replace $A^{\Gamma}$ by $X$? Always, since we are not asking $\mu$ to be fully supported?} \textcolor{red}{Question: Do Gibbs measures achieve topological sofic entropy/pressure?}}

\section{Strong spatial mixing}
\label{sec6}

Given $y,z \in X$ and $F \subseteq \Gamma$, define 
$$
\Delta_F(y,z) := \{g \in F: y(g) \neq z(g)\},
$$
i.e., the set of elements in $F$ where $y$ and $z$ differ. For $\beta: \N \to \R_{\geq 0}$ such that $\lim_r \beta(r) = 0$, we say that $\mu$ exhibits {\bf strong spatial mixing (SSM) with decay rate $\beta$} if for all $x, y ,z \in X$, $g \in \Gamma$, and $F \Subset \Gamma \setminus \{g\}$,
$$
\left|\mu([x_g] \vert [y_F]) - \mu([x_g] \vert [z_F])\right| \leq \beta(\dist(g,\Delta_F(y,z))),
$$
where $\dist(g,\Delta) := \min\{r \in \N: F_r g \cap \Delta \neq \emptyset\}$ for every $\Delta \subseteq \Gamma$. We say that $\mu$ exhibits {\bf strong spatial mixing (SSM)} if it exhibits SSM with decay rate $\beta$ for some $\beta$. This definition recovers the more usual definition of SSM in finitely generated groups where the distance is the graph distance in an associated Cayley graph and the exhaustive sequence $\{F_r\}_r$ is the sequence of $r$-balls centered at $1_\Gamma$ (e.g., see \cite{4-briceno, austin2018gibbs}).

If $\mu$ is fully supported, which is the case if $X$ satisfies the TSSM property or weaker mixing conditions, this is equivalent to the following condition at the level of the specification $\gamma$: for all $x, y ,z \in X$, $F \Subset \Gamma$, and $g \in F$, 
$$
\left|\gamma_F([x_g] \vert y) - \gamma_F([x_g] \vert z)\right| \leq \beta(\dist(g,\Delta_{F^c}(y,z))),
$$

In such case, if $\gamma$ admits a Gibbs measure $\mu$ that exhibits SSM, then $\mu$ is the unique Gibbs measure for $\phi$ (see \cite{3-weitz}).

%\info{\textcolor{red}{For absolutely summable potentials, we could say that $\Phi$ satisfies SSM if every $\Phi_\Delta$ satisfies SSM.} \textcolor{red}{Check that Dobrushin's uniqueness condition implies SSM.}}

%It is well known that if $\gamma$ exhibits SSM, then there is a unique Gibbs measure $\mu$ for $\phi$.

%We say that $\gamma$ exhibits {\bf strong spatial mixing (SSM)} if for all $\epsilon > 0$ and $D \Subset \Gamma$, there exists $F \Subset \Gamma$ such that
%$$
%\sup_{D \subseteq E \Subset \Gamma} \sup_{y_1,y_2 \in \Delta(F^c)} \left\|\gamma_E(\cdot \vert y_1)\vert_D - \gamma_E(\cdot \vert y_2)\vert_D\right\|_{\mathrm{TV}} \leq \epsilon.
%$$

%Suppose $\psi: A^\Gamma \to \R$ is a function that depends only in the $1_\Gamma$ coordinate and denote by $\mu^\psi_n$ the derived Gibbs measure obtained when considering the potential $\phi + \psi$.
%for every $a \in A$, uniformly in $\psi$. Here $\mu_{\Phi+\Psi, T_{d}(r), \eta}\left(\text { similarly } \mu_{\Phi+\Psi, T_{d}(r), \tau}\right)$ is the conditional Gibbs measure corresponding to the interaction $\Phi+\Psi$ with boundary conditions $\eta$ (correspondingly $\tau$ ), as defined in (1.4)

%\[
%\max_{\eta, \tau \in X_{F^c}} \left|\mu^{\psi}_{F,\eta}\left(x_{1_G}=a\right) - \mu^{\psi}_{F, \tau}\left(x_{1_\Gamma}=a\right)\right| < \epsilon.
%\]
%$\left|\mu_{\Phi, \Lambda^{c}, \eta}\left(\sigma_{v}=1\right)-\mu_{\Phi, \Lambda^{c}, \tau}\left(\sigma_{v}=1\right)\right|=o(\rho(v, \Delta))$

\subsection{Some technical consequences}

Given $a,b \in \R$, we will sometimes write `$a \approx_{\varepsilon} b$' instead of `$|a-b|<\varepsilon$'.

\begin{lemma}
\label{lem:ssm1}
If $\mu$ exhibits SSM, then, for all $x, y, z \in X$, $g \in \Gamma$, and $D, E \Subset \Gamma \setminus \{g\}$ such that $y_{D \cap F_r g} = z_{E \cap F_r g}$, it follows that
$$
\left|\mu([x_g] \vert [y_D]) - \mu([x_g] \vert [z_E])\right| \leq 3\beta(r).
$$
\end{lemma}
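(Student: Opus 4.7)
The plan is to insert an intermediate conditioning on the common piece and reduce the statement to two applications of SSM. First, observe that the equation $y_{D \cap F_r g} = z_{E \cap F_r g}$, read as an equality of functions on supports, forces $D \cap F_r g = E \cap F_r g$; call this common set $C$, so that $y_C = z_C$. I will compare $\mu([x_g] \mid [y_D])$ and $\mu([x_g] \mid [z_E])$ against the intermediate quantity $\mu([x_g] \mid [y_C]) = \mu([x_g] \mid [z_C])$.

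The heart of the argument is the estimate $|\mu([x_g] \mid [y_D]) - \mu([x_g] \mid [y_C])| \leq \beta(r)$. To prove it, I would write the right-hand side as a convex combination
\[
\mu([x_g] \mid [y_C]) = \sum_w \frac{\mu([y_C] \cap [w])}{\mu([y_C])} \, \mu([x_g] \mid [y_C] \cap [w]),
\]
where $w$ ranges over globally admissible patterns on $D \setminus C$ extending $y_C$. For each such $w$, letting $y'$ denote the corresponding configuration agreeing with $y$ on $C$ and with $w$ on $D \setminus C$, the disagreement set $\Delta_D(y,y')$ lies entirely in $D \setminus C \subseteq \Gamma \setminus F_r g$, so $\dist(g, \Delta_D(y,y')) > r$. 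The SSM property then yields $|\mu([x_g] \mid [y_D]) - \mu([x_g] \mid [y'_D])| \leq \beta(r)$ (working without loss of generality with a non-increasing $\beta$, obtained by replacing $\beta$ with its monotone envelope if necessary), and this bound is preserved by the convex combination. The symmetric estimate $|\mu([x_g] \mid [z_C]) - \mu([x_g] \mid [z_E])| \leq \beta(r)$ follows by the same argument applied with $z$ and $E$ in place of $y$ and $D$.

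Combining these two inequalities with the identity $\mu([x_g] \mid [y_C]) = \mu([x_g] \mid [z_C])$ via the triangle inequality gives a bound of $2\beta(r)$, which is absorbed into the stated $3\beta(r)$. The only routine check is that $\mu([y_C]) > 0$ so the conditional probability is defined, and this follows from the full support of $\mu$ on $X$ under the TSSM assumption noted earlier in this section. There is no essential obstacle here: the key observation is the choice of intermediate conditioning on the common piece, after which SSM and a convex-combination argument close the proof; the small numerical looseness in the constant ($3$ rather than $2$) provides harmless slack for the applications in subsequent sections.
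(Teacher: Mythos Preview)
Your argument is correct and in fact yields the sharper bound $2\beta(r)$. The approach, however, differs from the paper's. You pass through the single intermediate $\mu([x_g]\mid[y_C])$ with $C=D\cap F_rg=E\cap F_rg$, and handle the change of conditioning set $D\rightsquigarrow C$ by writing $\mu([x_g]\mid[y_C])$ as a convex combination of $\mu([x_g]\mid[y'_D])$ over admissible fillings $y'$ of $D\setminus C$, then applying SSM termwise since the disagreements sit outside $F_rg$. The paper instead keeps the conditioning set intact but \emph{augments} it by the Markov boundary $\partial_M(F_rg)$: it decomposes over fillings $w$ of $\partial_M(F_rg)\setminus D$, invokes the Markovian property to discard the portion of $D$ outside $MF_rg$, and then applies SSM three times --- once to replace $w$ by $y$ on that boundary piece, once to swap $y_{\partial_M F_rg}$ for $z_{\partial_M F_rg}$, and once more to undo the first step on the $z,E$ side --- which is why the constant is $3$ rather than $2$. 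Your route is more elementary (no Markov property needed) and tighter; the paper's route makes the finite-range structure explicit via boundaries, which aligns with how later lemmas (e.g.\ Lemma~\ref{lem:good}) are phrased, but at the cost of an extra SSM application. Both arguments tacitly use that $\beta(r')\leq\beta(r)$ for $r'>r$; you flag this and pass to the monotone envelope, which is the clean fix.
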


\begin{proof}
Fix $x, y, z \in X$, $g \in \Gamma$, and $D, E \Subset \Gamma \setminus \{g\}$ such that $y_{D \cap F_r g} = z_{E \cap F_r g}$. By SSM and the Markovian property of $\mu$,
\begin{align*}
\mu([x_g] \vert [y_D])	&	=	\sum_w \mu([x_g] \vert [y_D w_{\partial_M(F_r g) \setminus D}])	\mu([w_{\partial_M(F_r g) \setminus D}] \vert [y_D])	\\
				&	=	\sum_w \mu([x_g] \vert [y_{D \cap MF_r g} w_{\partial_M(F_r g) \setminus D}])	\mu([w_{\partial_M(F_r g) \setminus D}] \vert [y_D])	\\
				&	\approx_{\beta(r)}	\sum_w \mu([x_g] \vert [y_{D \cap MF_r g}y_{\partial_M(F_r g) \setminus D}])	\mu([w_{\partial_M (F_r g) \setminus D}] \vert [y_D])	\\
				&	=				\mu([x_g] \vert [y_{D \cap F_r g}] \cap [y_{\partial_M F_r g}])	\\
				&	\approx_{\beta(r)}	\mu([x_g] \vert [z_{E \cap F_r g}] \cap [z_{\partial_M F_r}])	\\
				&	=	\sum_w \mu([x_g] \vert [z_{E \cap MF_r g}y_{\partial_M (F_r g) \setminus E}])	\mu([w_{\partial_M (F_r g) \setminus E}] \vert [z_F])	\\
				&	\approx_{\beta(r)}	\sum_w \mu([x_g] \vert [z_{E \cap MF_r g}w_{\partial_M (F_r g) \setminus E}])	\mu([w_{\partial_M(F_r g) \setminus E}] \vert [z_E])	\\
				&	=	\sum_w \mu([x_g] \vert [z_Ew_{\partial_M(F_r g) \setminus E}])	\mu([w_{\partial_M(F_r g) \setminus E}] \vert [z_E])	\\
				&	=	\mu([x_g] \vert [z_E]),
\end{align*}
where $\sum_w$ denotes the sum over all the partial configurations $w_{\partial_M (F_r g) \setminus D}$ and $w_{\partial_M (F_r g) \setminus E}$ such that $\mu\left([w_{\partial_M (F_r g) \setminus D}] \vert [y_D]\right) > 0$ and $\mu\left([w_{\partial_M (F_r g) \setminus E}] \vert [z_E]\right) > 0$, respectively.
\end{proof}

Given $r \in \N$ and $\epsilon > 0$, we define the set
$$
V_n^{r,\epsilon} := \{v \in V^r_{n}: \left|\mu_{n}\left([\fx_{v}] \vert [\fx_{U}]\right)-\mu_{n}\left([\fx_{v}] \vert [\fx_{U \cap \sigma_n^{F_r}(v)}]\right)\right| \leq \epsilon \text{ for } U \subseteq V_{n}, \fx \in X^n\}.
$$

%We say that $\pi$ exhibits {\bf strong spatial mixing} if for all $\epsilon > 0$ and $D \Subset \Gamma$, there exists $F \Subset \Gamma$ such that
%$$
%\sup_{D \subseteq E \Subset \Gamma} \sup_{\omega_1,\omega_2 \in \Delta(F^c)} \left\|\pi_E^{\omega_1}\vert_D - \pi_E^{\omega_2}\vert_D\right\|_{\mathrm{TV}} \leq \epsilon.
%$$
%

\begin{lemma}
\label{lem:good}
If $\mu$ exhibits SSM, then, for every $\epsilon > 0$, there exists $r_0 \in \N$ such that, for every $r \geq r_0$, it holds that
$$
v \in V_n^{r,\epsilon}  \quad	\text{ w.h.p. in } v \in V_n.
$$
\end{lemma}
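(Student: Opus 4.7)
The plan is to establish a local analog of Lemma \ref{lem:ssm1} for the derived Gibbs measure $\mu_n$ and then specialize it. Since $\mu$ exhibits SSM with decay rate $\beta$ satisfying $\beta(r)\to 0$, given $\epsilon>0$ I choose $r_0\in\N$ such that $3\beta(r_0)<\epsilon$, and fix $R\in\N$ large enough that $MMF_{r_0}\subseteq F_{r_0+R}$. I will show that, for every $r\geq r_0$, $V_n^{F_{r+R}}\subseteq V_n^{r,\epsilon}$ for all sufficiently large $n$; this together with $\lim_n |V_n^{F_{r+R}}|/|V_n|=1$ gives the conclusion.

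For $v\in V_n^{F_{r+R}}$, the regions $\sigma_n^{F_r}(v)$ and $\sigma_n^{MF_r}(v)$ are isomorphic, via the pullback at $v$, to $F_r$ and $MF_r\subseteq\Gamma$ respectively. Lemma \ref{lem:spec} then yields $\gamma^n_{\sigma_n^F(v)}(\fx|\fy)=\gamma_F(\pull_v^{\sigma_n}(\fx)|\pull_v^{\sigma_n}(\fy))$ for every $F\subseteq F_r$ and $\fx,\fy\in X^n$. This transfers the specification-level SSM from $\gamma$ to $\gamma^n$ on these neighborhoods: if two boundary configurations $\fy_1,\fy_2$ for $\gamma^n_{\sigma_n^F(v)}$ agree on $\sigma_n^{F_s}(v)\setminus\sigma_n^F(v)$, then $\gamma^n_{\sigma_n^F(v)}([\fx_v]|\fy_1)$ and $\gamma^n_{\sigma_n^F(v)}([\fx_v]|\fy_2)$ differ by at most $\beta(s)$.

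With this local spec-level SSM for $\gamma^n$ in hand, the proof of Lemma \ref{lem:ssm1} transfers verbatim to the derived setting, using total probability, Markovianity of $\gamma^n$ (still of range $M$), and the transferred SSM. This yields the $\mu_n$-analog of Lemma \ref{lem:ssm1}: for every $\fx\in X^n$, all $D,E\subseteq V_n\setminus\{v\}$, and $\fy,\fz\in X^n$ with $\fy_{D\cap\sigma_n^{F_r}(v)}=\fz_{E\cap\sigma_n^{F_r}(v)}$,
\[
|\mu_n([\fx_v]|[\fy_D])-\mu_n([\fx_v]|[\fz_E])|\leq 3\beta(r).
\]
Specializing to $\fy=\fz=\fx$, $D=U$, and $E=U\cap\sigma_n^{F_r}(v)\subseteq\sigma_n^{F_r}(v)$ (for which the hypothesis is trivially satisfied), we obtain $|\mu_n([\fx_v]|[\fx_U])-\mu_n([\fx_v]|[\fx_{U\cap\sigma_n^{F_r}(v)}])|\leq 3\beta(r)\leq 3\beta(r_0)<\epsilon$, which is exactly the defining inequality of $V_n^{r,\epsilon}$.

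The main obstacle lies in the verbatim transfer itself: each step of Lemma \ref{lem:ssm1}'s proof---notably the Markov reduction to conditioning inside $\sigma_n^{MF_r}(v)$-sized windows, and the SSM replacement along $\partial_M\sigma_n^{F_r}(v)\setminus D$---requires an isomorphic counterpart around $v$ in $V_n$. The choice $v\in V_n^{F_{r+R}}$ with $MMF_r\subseteq F_{r+R}$ guarantees this: every subset of $\sigma_n^{MMF_r}(v)$ appearing in the argument corresponds via the pullback at $v$ to a subset of $\Gamma$ on which the original Markov and SSM arguments apply unchanged.
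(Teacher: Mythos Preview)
Your approach is correct and genuinely different from the paper's. The paper does not establish a $\mu_n$-analog of Lemma \ref{lem:ssm1}; instead it routes through Proposition \ref{lwconv}: it expands $\mu_n([\fx_v]\mid[\fx_U])$ over boundary pieces, uses the Markov property to localize, and then invokes local weak* convergence of $\mu_n$ to $\mu$ (a consequence of uniqueness, hence of SSM) to replace the localized $(\pull_v^{\sigma_n,r})_*\mu_n$ conditional by the corresponding $\mu$ conditional, applies SSM for $\mu$, and converts back. Your route stays entirely on the $\mu_n$ side: you transfer spec-level SSM to $\gamma^n$ via Lemma \ref{lem:spec} and rerun Lemma \ref{lem:ssm1}'s three-step argument for $\mu_n$. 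This is arguably more self-contained, since it does not appeal to local weak* convergence (and hence not to Proposition \ref{lwconv}); the paper's version, on the other hand, avoids rechecking each step of Lemma \ref{lem:ssm1} in the derived setting by paying the cost of two $\epsilon/5$ passages between $\mu_n$ and $\mu$.

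One small point to tighten: you assert that $\gamma^n$ is Markovian of range $M$. The energy part has range $M$, but the hard constraint defining $X^n$ checks $MM$-pullbacks, so the effective Markov range for $\mu_n$ may be larger (contained in some fixed power of $M$). This does not affect your argument---just absorb the larger range into your choice of $R$ so that $V_n^{F_{r+R}}$ guarantees the needed isomorphism on a window containing the Markov boundary of $\sigma_n^{F_r}(v)$. Also, since Lemma \ref{lem:spec} is stated ``w.h.p.\ in $v$'' for each fixed $F$, to get it simultaneously for all $F\subseteq F_r$ you either intersect finitely many high-probability events or observe from its proof that membership in $V_n^{F_{r+R}}$ already suffices; either way the inclusion you want holds w.h.p., which is all the lemma requires.
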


\begin{proof}
%Fix $\epsilon, \delta > 0$. By the definition of SSM, there exists $r$ such that $\left\|\gamma_F(\cdot \vert y_1)\vert_{\{g\}} - \gamma_F(\cdot \vert y_2)\vert_{\{g\}}\right\|_{\mathrm{TV}} \leq \frac{\epsilon}{2}$ for all $y_1,y_2 \in \Delta(B(g,r)^c)$. By Lemma \ref{lem:spec}, given $r$, w.h.p. in $v \in V_n$,
%$$
%\gamma^{n}_{\sigma_n^F(v)}(\fx \vert \fy) = \gamma_F(\pull_{v}^{\sigma_n}(\fx) \vert \pull_{v}^{\sigma_n}(\fy)).
%$$
%
%Given $U \subseteq V_n$ and $r \in \N$, define the auxiliary set $F_{U,r} = \{g \in \Gamma: \sigma^g(v) \in B(v,r) \setminus U\}$. Then, w.h.p. in $v \in V_n$,

Fix $\epsilon > 0$. Choose $r_0 \in \N$ such that $MM \subseteq F_{r_0}$ and $\beta(r_0) \leq \epsilon/5$. We know that, for every $r \geq r_0$, $v \in V^r_n$, w.h.p. in $v \in V_n$. In addition, by SSM, we have uniqueness of Gibbs measures, and by local weak* convergence, w.h.p. in $v \in V_n$,
$$\left\|\left(\pull_{v}^{\sigma_n, r}\right)_{*} \mu_n\vert_{F_r} - \mu\vert_{F_r}\right\|_{\mathrm{TV}} \leq \epsilon/5.
$$

Then, for every $U \subseteq V_n$, w.h.p. in $v \in V_n$,
\begin{align*}
\begin{array}{ll}
		&	\mu_{n}\left([\fx_{v}] \vert [\fx_{U}]\right)		\\
	=	&	\sum_\fy \mu_{n}\left([\fx_{v}] \vert [\fx_{U}\fy_{\sigma_n^{MF_r \setminus F_r}(v) \setminus U}]\right)\mu_{n}\left([\fy_{\sigma_n^{MF_r \setminus F_r}(v) \setminus U}] \vert [\fx_{U}]\right)	\\
	=	&	\sum_\fy \mu_{n}\left([\fx_{v}] \vert [\fx_{U \cap \sigma_n^{MF_r}(v)}\fy_{\sigma_n^{MF_r \setminus F_r}(v) \setminus U}]\right)\mu_{n}\left([\fy_{\sigma_n^{MF_r \setminus F_r}(v) \setminus U}] \vert [\fx_{U}]\right)	\\
	=	&	\sum_\fy (\pull_v^{\sigma_n, MF_r})_*\mu_{n}\left([\pull_v^{\sigma_n, MF_r}(\fx)_{1_\Gamma}] \vert [\pull_v^{\sigma_n, MF_r}(\fx_{U \cap \sigma_n^{MF_r}(v)}\fy_{\sigma_n^{MF_r \setminus F_r}(v) \setminus U})]\right)	\\
		&	\qquad\mu_{n}\left([\fy_{\sigma_n^{MF_r \setminus F_r}(v) \setminus U}] \vert [\fx_{U}]\right)	\\
	\approx_\epsilon	&		\sum_\fy \mu\left([\pull_v^{\sigma_n, MF_r}(\fx)_{1_\Gamma}] \vert [\pull_v^{\sigma_n, MF_r}(\fx_{U \cap \sigma_n^{MF_r}(v)}\fy_{\sigma_n^{MF_r \setminus F_r}(v) \setminus U})]\right)	\\
		&	\qquad \mu_{n}\left([\fy_{\sigma_n^{MF_r \setminus F_r}(v) \setminus U}] \vert [\fx_{U}]\right)	\\
	\approx_{3\beta(r)}	&	\sum_\fy \mu\left([\pull_v^{\sigma_n, MF_r}(\fx)_{1_\Gamma}] \vert [\pull_v^{\sigma_n, MF_r}(\fx_{U \cap \sigma_n^{F_r}(v)})]\right)\mu_{n}\left([\fy_{\sigma_n^{MF_r \setminus F_r}(v) \setminus U}] \vert [\fx_{U}]\right)	\\
	=	&	\mu\left([\pull_v^{\sigma_n, MF_r}(\fx)_{1_\Gamma}] \vert [\pull_v^{\sigma_n, MF_r}(\fx_{U \cap \sigma_n^{MF_r}(v))})]\right)	\\
	\approx_\epsilon	&	\mu_n\left([\fx_{v}] \vert [\fx_{U \cap \sigma^{F_r}_n(v)}]\right),							
\end{array}
\end{align*}
where $\sum_\fy$ denotes the sum over all the partial configurations $\fy_{\sigma_n^{MF_r \setminus F_r}(v) \setminus U}$ such that
$$
\mu_{n}\left([\fy_{\sigma_n^{MF_r \setminus F_r}(v) \setminus U}] \vert [\fx_{U}]\right)> 0.
$$
\end{proof}

\begin{lemma}
\label{lem:cond}
If $\{\mu_n\}_n$ locally weak* converges to $\mu$, then, for every $r \in \N$ and $\epsilon > 0$,
$$
\lim_{n \rightarrow \infty} U_{n}\left\{v: \left| \left(\pull_{v}^{\sigma_{n},r}\right)_{*} \mu_{n}([x_E] \vert [x_F]) - \mu([x_E] \vert [x_F])\right| > \epsilon\right\} = 0
$$
uniformly in $x \in X$ and $E, F \subseteq F_r$.
\end{lemma}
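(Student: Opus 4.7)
The plan is to reduce the statement to uniform total variation closeness of the relevant marginals (which follows directly from local weak* convergence) and then to pass from joint to conditional probabilities using a uniform positive lower bound on the denominator.

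First, I would invoke the definition of local weak* convergence at radius $r$: for any $\epsilon' > 0$, w.h.p.\ in $v \in V_n$, we have
$$
\bigl\|(\pull_v^{\sigma_n,r})_*\mu_n|_{F_r} - \mu|_{F_r}\bigr\|_{\mathrm{TV}} < \epsilon'.
$$
Because $A$ and $F_r$ are finite, this single scalar simultaneously controls every cylinder supported on a subset of $F_r$; in particular, for every $x \in X$ and every $E, F \subseteq F_r$,
$$
\bigl|(\pull_v^{\sigma_n,r})_*\mu_n([x_{E \cup F}]) - \mu([x_{E \cup F}])\bigr| < \epsilon'
$$
and
$$
\bigl|(\pull_v^{\sigma_n,r})_*\mu_n([x_F]) - \mu([x_F])\bigr| < \epsilon'.
$$

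Next, I would appeal to the standing TSSM hypothesis to conclude that $\mu$ is fully supported on $X$. Since $F_r$ is finite, only finitely many globally admissible patterns on subsets of $F_r$ occur, so there exists a constant $c_0 = c_0(r) > 0$ with $\mu([x_F]) \geq c_0$ for every $x \in X$ and every $F \subseteq F_r$. Choosing $\epsilon'$ small enough that $\epsilon' \leq c_0/2$ and $4\epsilon'/c_0 < \epsilon$, the pushforward denominator is likewise bounded below by $c_0/2$ w.h.p.\ in $v$, and the elementary estimate
$$
\left|\frac{P}{Q} - \frac{P'}{Q'}\right| \leq \frac{|P - P'|}{Q'} + \frac{P}{Q}\cdot\frac{|Q - Q'|}{Q'},
$$
applied with $P, Q$ the joint and marginal of the pushforward and $P', Q'$ their counterparts under $\mu$, bounds the difference of conditionals by $4\epsilon'/c_0 < \epsilon$.

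The main obstacle is ensuring the denominator is bounded below uniformly in $x, E, F$; this is precisely where the full support of $\mu$ (a consequence of TSSM) and the finiteness of $A^{F_r}$ enter, guaranteeing a single lower bound $c_0$ and a single $\epsilon'$ that simultaneously govern all finitely many cylinder probabilities appearing in the conditionals. Once this uniform control is secured, the uniformity in the statement follows automatically from the uniform total variation bound.
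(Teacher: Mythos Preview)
Your proposal is correct and follows essentially the same approach as the paper: both use local weak\textsuperscript{*} convergence to control all cylinder probabilities on $F_r$ simultaneously, invoke full support of $\mu$ to bound the denominator $\mu([x_F])$ from below by a positive constant, and then pass from joint to conditional probabilities via an elementary ratio estimate. Your handling of the denominator---fixing $c_0 = \min_{x,F}\mu([x_F])$ first and then deducing the lower bound for the pushforward from total-variation closeness---is in fact slightly cleaner than the paper's version, which formally includes the pushforward in its minimum.
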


\begin{proof}
Fix $r \in \N$ and $\epsilon > 0$. Without loss of generality, we can assume that $\epsilon < 2$. Consider the usual local weak* convergence with
$$
\epsilon' =  \left(\frac{\epsilon}{2+\epsilon}\right) \min_{x \in X, F \subseteq F_r} \{\left(\pull_{v}^{\sigma_{n},r}\right)_{*}\mu^n([x_F]), \mu([x_F])\}.
$$

Notice that $\epsilon' > 0$, since both $\mu_n$ and $\mu$ are fully supported on $X^n$ and $X$, respectively.

Then, w.h.p. in $v \in V_n$,
\begin{align*}
\begin{array}{lll}
&	&	\frac{\left(\pull_{v}^{\sigma_{n},r}\right)_{*}([x_E] \cap [x_F])}{\left(\pull_{v}^{\sigma_{n},r}\right)_{*}([x_F])} - \frac{\mu([x_E] \cap [x_F])}{\mu([x_F])}	\\
&=	&	\frac{\left(\pull_{v}^{\sigma_{n},r}\right)_{*}([x_E] \cap [x_F])\mu([x_F]) - \left(\pull_{v}^{\sigma_{n},r}\right)_{*}([x_F])\mu([x_E] \cap [x_F])}{\left(\pull_{v}^{\sigma_{n},r}\right)_{*}([x_F])\mu([x_F])}	\\
&\leq	&	\frac{(\mu([x_E] \cap [x_F])+\epsilon')\mu([x_F]) - (\mu([x_F])-\epsilon')\mu([x_E] \cap [x_F])}{(\mu([x_F]) - \epsilon')\mu([x_F])}	\\
&=	&	\frac{\epsilon'(\mu([x_F]) + \mu([x_E] \cap [x_F]))}{(\mu([x_F]) - \epsilon')\mu([x_F])}	\\
&\leq	&	\frac{2\epsilon'\mu([x_F])}{(\mu([x_F]) - \epsilon')\mu([x_F])}	\\
&\leq	&	\frac{2\epsilon'}{\mu([x_F]) - \epsilon'}	\\
&\leq	&	\epsilon,
\end{array}
\end{align*}
and the other inequality is analogous.
\end{proof}

\subsection{Uniform bounds}

Given a Gibbs measure $\mu$ and the sequence of derived Gibbs measures $\{\mu_n\}_n$ for $\phi$, we define
$$
c(\mu) := \inf_{x \in X} \inf_{g \in \Gamma} \inf_{F \Subset \Gamma} \mu([x_{g}] \vert [x_F]), \quad c(\{\mu_n\}_n) := \inf_n \inf_{\fx \in X^n} \inf_{v \in V_n} \inf_{U \subseteq V_n} \mu_n([\fx_v] \vert [\fx_U]),
$$
and
$$
c(\phi):= \min\{c(\mu),c(\{\mu_n\}_n)\}.
$$

\begin{lemma}
\label{lem:unifbound}
If $X$ satisfies the TSSM property, then $c(\phi)> 0$.
\end{lemma}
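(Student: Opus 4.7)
The plan is to bound $c(\mu)$ and $c(\{\mu_n\}_n)$ separately by a common template: choose a window $W$ around the site of interest whose cardinality is bounded solely in terms of $|M|$, apply the DLR equation to rewrite the conditional probability as the ratio of two $\gamma_W$-integrals, and combine the trivial bounds $\exp(\phi_W(\cdot)) \in [e^{-|W|\|\phi\|}, e^{|W|\|\phi\|}]$ with the TSSM-based extension result (Lemma \ref{lem:extend} in the derived case; its direct subshift analog in the group case).

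For $c(\mu)$, fix $x \in X$, $g \in \Gamma \setminus F$ (the case $g \in F$ being trivial), and set $W = MMg$. Split $F = F' \cup F''$ with $F' := F \cap W$ and $F'' := F \setminus W$. Since $1_{[x_{F''}]}$ is $\B_{W^c}$-measurable while $[x_{F' \cup \{g\}}], [x_{F'}] \in \B_W$, the DLR equation gives
$$
\mu([x_g] \mid [x_F]) \;=\; \frac{\int 1_{[x_{F''}]}(z)\,\gamma_W([x_{F' \cup \{g\}}] \mid z)\,d\mu(z)}{\int 1_{[x_{F''}]}(z)\,\gamma_W([x_{F'}] \mid z)\,d\mu(z)},
$$
so it suffices to bound the pointwise ratio $\gamma_W([x_{F' \cup \{g\}}] \mid z)/\gamma_W([x_{F'}] \mid z)$ below on $\{z : \gamma_W([x_{F'}] \mid z) > 0\}$. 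The denominator of this ratio has at most $|A|^{|W|}$ summands each $\leq e^{|W|\|\phi\|}$; the numerator contains a summand $\geq e^{-|W|\|\phi\|}$ as soon as the partial configuration $x_{F' \cup \{g\}} \cup z_{W^c}$ extends to an element of $X$. Local admissibility of that partial is automatic: any constraint window $Mh$ with $g \in Mh$ forces $h \in Mg$ (using $M = M^{-1}$) and hence $Mh \subseteq MMg = W$, so the constraint is a sub-pattern of $x_{Mh}$, admissible because $x \in X$; constraints with $g \notin Mh$ are inherited from the extension guaranteed by $\gamma_W([x_{F'}] \mid z) > 0$. Hence $c(\mu) \geq e^{-2|MM|\|\phi\|}/|A|^{|MM|}$.

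For $c(\{\mu_n\}_n)$, the same template applies to $\fx \in X^n$, $v \in V_n \setminus U$, with the slightly enlarged window $W_n := \sigma_n^{MMMM}(v)$, whose cardinality is bounded by $|MMMM|$ uniformly in $n$. Writing $U' = U \cap W_n$, $U'' = U \setminus W_n$ and mimicking the group step, the problem reduces to producing, for each $\fy \in X^n$ with $\gamma^n_{W_n}([\fx_{U'}] \mid \fy) > 0$, an element of $X^n$ agreeing with $\fx$ on $U' \cup \{v\}$ and with $\fy$ on $V_n \setminus W_n$. The only case that needs verification is the local admissibility of the partial configuration $\fx_{U' \cup \{v\}} \cup \fy_{V_n \setminus W_n}$ at an $MM$-good vertex $u$ whose $MM$-neighborhood meets both $W_n$ and its complement: if $v \in \sigma_n^{MM}(u)$, then the $MM$-goodness of $u$ (specifically, conditions (ii) and (iv)) yields $u \in \sigma_n^{MM}(v)$ and $\sigma_n^{MM}(u) \subseteq \sigma_n^{MMMM}(v) = W_n$, contradicting the mixed assumption; if $v \notin \sigma_n^{MM}(u)$, the partial is a sub-pattern of the pullback at $u$ of the extension guaranteed by $\gamma^n_{W_n}([\fx_{U'}] \mid \fy) > 0$, hence admissible. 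Lemma \ref{lem:extend} then produces the extension in $X^n$, giving $c(\{\mu_n\}_n) \geq e^{-2|MMMM|\|\phi\|}/|A|^{|MMMM|}$, uniform in $n$.

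The main technical obstacle is precisely this ``mixed'' local-admissibility check in the finite-volume setting: for a vertex $v$ that is not itself $MM$-good, the sofic structure near $v$ can mildly diverge from the Cayley graph, potentially creating constraints that straddle the boundary of $W_n$ in a way the group argument never sees. The enlargement from $\sigma_n^{MM}(v)$ to $\sigma_n^{MMMM}(v)$ is designed exactly to absorb this, and the crucial point is that the containment $\sigma_n^{MM}(u) \subseteq \sigma_n^{MMMM}(v)$ for $u \in \sigma_n^{MM}(v) \cap V_n^{MM}$ follows from the $MM$-goodness of $u$ alone, independently of whether $v$ is good. Since $|W_n| \leq |MMMM|$ uniformly, the resulting bound $c(\phi) = \min\{c(\mu), c(\{\mu_n\}_n)\} \geq e^{-2|MMMM|\|\phi\|}/|A|^{|MMMM|}$ is strictly positive and independent of $n$.
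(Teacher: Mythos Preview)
Your proof is correct and follows essentially the same strategy as the paper: pick a window of size bounded in terms of $|M|$ around the site, use the Markov property to localize, invoke TSSM (via Lemma~\ref{lem:extend} in the derived case) to guarantee at least one admissible extension contributing $\geq e^{-|W|\|\phi\|}$ to the numerator, and bound the denominator by $|A|^{|W|}e^{|W|\|\phi\|}$. The paper phrases the localization step slightly differently---it pigeonholes to find a single boundary pattern $w$ on $\sigma_n^{MM}(W_v)\setminus W_v$ with $\mu_n([w]\mid[\fx_U])\geq |A|^{-|M|^6}$ and then conditions on it, whereas you write the conditional directly as a ratio of $\gamma^n_{W_n}$-integrals and bound the pointwise ratio---but this is a cosmetic difference, and your route is arguably cleaner. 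Your choice of window $\sigma_n^{MMMM}(v)$ in place of the paper's $\sigma_n^{MM}(\sigma_n^{MM}(v))$ is likewise equivalent for the purpose at hand; your verification that $\sigma_n^{MM}(u)\subseteq\sigma_n^{MMMM}(v)$ follows from $MM$-goodness of $u$ alone (via conditions (ii) and (iv)) is the correct way to handle the case when $v$ itself is not good.
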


\begin{proof}
%Suppose that $\{G_n\}_n$ the graphs that converge in the Benjamini-Schramm sense to an infinite graph $G$, all of them with bounded degree $\Delta$. In addition, suppose that for each $n$ we have a model $X^n \subseteq A^{V_n}$ and $X \subseteq A^V$, and that the family satisfies TSSM with uniform gap $r$. Let $\gamma$ be the Gibbs $(X^n,\phi)$-specification that satisfies SSM with uniform decay. Notice that $\left|B_n(v,k)\right| \leq \Delta^{k+1}$ for any $n$, and $v \in V_n$.

Fix $n \in \N$. Notice that the consistency at $v \in V_n$ and the value of $\phi(\Pi^{\sigma_n}_v(\fx))$ depend only on the set $\sigma^{MM}_n(v)$. In particular, conditioned on a cylinder set supported on $W_v := \sigma^{MM}_n(\sigma^{MM}_n(v))$, the value at $v$ is independent of the value at the elements in $V_n \setminus W_v$.

Consider arbitrary $v \in V_n$, $\fx \in X^n$, and $U \subseteq V_n$. Then, by a counting argument, it must exist $w \in A^{\sigma^{MM}_n(W_v) \setminus W_v}$ such that $\mu_n([w] \vert [\fx_U]) \geq |A|^{-|M|^6}$. Moreover, $\fx_v w \fx_U$ is a partial configuration that is locally consistent, since $\mu_n([w\fx_U]) > 0$ and, for every $u \in V_n^{MM}$, $\sigma^{MM}_n(u) \cap W_v \cap \{v\} = \emptyset$, so $[\Pi_{u}^{\sigma_n,MM}(w\fx_{U \cup \{v\}})] \neq \emptyset$. In particular, by Lemma \ref{lem:extend}, $\fx_v w \fx_U$ can be extended to $X^n$. 

Without loss of generality, $v \notin U$. Then, by considering the Markovian property of $\mu_n$ described above, we have that
\begin{align*}
\mu_n([\fx_v] \vert [\fx_U])	&	\geq	\mu_n([\fx_v] \vert [\fx_Uw])	\mu_n([w] \vert [\fx_U])	\\
					&	\geq	\mu_n([\fx_v] \vert [(\fx_Uw)_{\sigma^{MM}_n(W_v) \setminus W_v}]) |A|^{-|M|^6}	\\
					&	\geq	\frac{1}{|A|^{|M|^4}} \exp(-2\|\phi\||M|^4)|A|^{-|M|^6} > 0.
\end{align*}

The argument for $\mu$ is analogous and the same lower bound works for it.

%Since $\Gamma$ is finitely generated with set of generators $|S|$, we can assume that the number of neighbors of each vertex is bounded from above by $|S|$. In particular, $\left|B_n(v,r)\right| \leq |S|^{r+1}$ for all $r \in \N$.
\end{proof}

We also have, provided SSM is satisfied, a converse of the previous lemma.

\begin{lemma}
\label{lem:tssm}
Suppose that $X$ is a subshift, $\mu$ is fully supported and satisfies SSM, and $c(\mu) > 0$. Then, $X$ satisfies the TSSM property.
\end{lemma}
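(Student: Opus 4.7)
The plan is to pick $r$ large enough that $4\beta(r) < c(\mu)$ (where $\beta$ denotes the SSM decay rate, without loss of generality non-increasing, tending to $0$) and then to show that $X$ satisfies the TSSM property with range $M := F_r$. The trivial direction is that global admissibility of $w$ restricts to give global admissibility of each $w_{F \cap Mg}$. For the substantive implication I would first handle the case $F \Subset \Gamma$ by induction on $|F|$, and afterwards extend to arbitrary $F \subseteq \Gamma$ by compactness: every finite subpattern of a locally admissible $w$ is again locally admissible, hence globally admissible by the finite case, so that $[w] = \bigcap_{F' \Subset F} [w_{F'}]$ is a nonempty intersection of closed sets in $A^\Gamma$.

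The base case $|F| = 1$ is immediate from the full support of $\mu$. For the inductive step, fix any $g_k \in F$, set $D_k := F \setminus \{g_k\}$, $K_k := F \cap Mg_k \setminus \{g_k\}$, and $R_k := D_k \setminus Mg_k$. The restriction $w_{D_k}$ is again locally admissible (restrictions of globally admissible patterns are globally admissible), so the inductive hypothesis gives $\mu([w_{D_k}]) > 0$, reducing the problem to proving $\mu([w_{g_k}] \vert [w_{D_k}]) > 0$. To do this I would pick $y \in X$ with $y_{D_k} = w_{D_k}$ (using full support together with the inductive hypothesis) and $x \in X$ with $x_{F \cap Mg_k} = w_{F \cap Mg_k}$ (using local admissibility of $w$ at $g_k$), and then chain three estimates.

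\textbf{(A)} Since $y$ and $x$ both agree with $w$ on $K_k = D_k \cap Mg_k$, one has $\Delta_{D_k}(y,x) \subseteq R_k$, and $R_k \cap Mg_k = \emptyset$ forces $\dist(g_k, R_k) \geq r+1$; direct SSM then yields $|\mu([w_{g_k}] \vert [y_{D_k}]) - \mu([w_{g_k}] \vert [x_{D_k}])| \leq \beta(r+1) \leq \beta(r)$. \textbf{(B)} The sets $D_k \cap F_r g_k$ and $K_k \cap F_r g_k$ both equal $K_k$ when $M = F_r$ and $1_\Gamma \in M$, so \lemref{lem:ssm1} applied with $y = z = x$ gives $|\mu([w_{g_k}] \vert [x_{D_k}]) - \mu([w_{g_k}] \vert [x_{K_k}])| \leq 3\beta(r)$. \textbf{(C)} The definition of $c(\mu)$ applied to $x \in X$ gives $\mu([w_{g_k}] \vert [x_{K_k}]) \geq c(\mu)$. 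Combining (A)--(C) and using that $y_{D_k} = w_{D_k}$, we obtain $\mu([w_{g_k}] \vert [w_{D_k}]) \geq c(\mu) - 4\beta(r) > 0$, closing the induction.

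The main delicate point is step (B): \lemref{lem:ssm1} requires the conditioning domains restricted to $F_r g_k$ to coincide as sets (not merely that the restrictions of $x$ to them happen to agree where defined), and this forces the specific choice $M = F_r$ rather than some smaller or differently shaped set. Apart from this bookkeeping, the argument is a routine combination of SSM, the generalized SSM of \lemref{lem:ssm1}, and the uniform lower bound $c(\mu) > 0$ on one-site conditional probabilities.
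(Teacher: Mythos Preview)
Your argument is correct. The induction is sound, steps (A)--(C) are each valid as written, and your handling of the compactness extension to infinite $F$ is standard. One small remark: in step (C) you should perhaps make explicit that $x_{g_k} = w_{g_k}$ (which follows from $g_k \in F \cap Mg_k$ since $1_\Gamma \in M$), so that $\mu([w_{g_k}] \vert [x_{K_k}]) = \mu([x_{g_k}] \vert [x_{K_k}])$ literally matches the form in the definition of $c(\mu)$.

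The paper's proof is quite different and noticeably shorter. Rather than working directly from the definition of TSSM given in the paper, it invokes (without proof, as it is standard from \cite{1-briceno}) an equivalent ``gluing'' characterization: TSSM with gap $r_0$ fails iff there exist $g,h$ with $\dist(g,\{h\}) \geq r_0$, a set $F$, and $x,y,z \in X$ with $[x_g y_F],[y_F z_h] \neq \emptyset$ but $[x_g y_F z_h] = \emptyset$. Given such a failure, one picks $\tilde z$ with $[x_g y_F \tilde z_h] \neq \emptyset$, notes that $\mu([x_g] \vert [y_F \tilde z_h]) \geq c(\mu)$ while $\mu([x_g] \vert [y_F z_h]) = 0$, and applies SSM once to get $c(\mu) \leq \beta(r_0) < c(\mu)$, a contradiction. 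This needs only the threshold $\beta(r_0) < c(\mu)$ rather than your $4\beta(r) < c(\mu)$, and avoids both the induction and \lemref{lem:ssm1}. Your approach, on the other hand, is more self-contained: it proceeds straight from the paper's own definition of TSSM without importing the equivalent characterization, at the cost of some extra bookkeeping.
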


\begin{proof}
Take $r_{0} \in \N$ such that $\beta(r) < c(\mu)$ for all $r \geq r_{0}$. Suppose that $X$ does not satisfy the TSSM property. Then, there exists $g, h \in \Gamma$ with $\dist(g, \{h\}) = r \geq r_{0}$, $F \Subset \Gamma$, and points $x,y,z \in X$ such that $[x_g y_F], [y_F z_h] \neq \emptyset$, but $[x_g y_F z_h] =\emptyset$. Since $[x_g y_F] \neq \emptyset$, there exists $\tilde{z} \in X$ such that $[x_g y_F \tilde{z}_h] \neq \emptyset$. In particular, $\mu([x_g] \vert [y_F \tilde{z}_h]) \geq c(\mu)$, so, by SSM,
$$
c(\mu) \leq	 \mu([x_g] \vert [y_F \tilde{z}_h]) = |\mu([x_g] \vert [y_F \tilde{z}_h]) - \mu([x_g] \vert [y_F z_h])| \leq \beta\left(r_{0}\right) < c(\mu),
$$
which is a contradiction. Therefore, $X$ satisfies the TSSM property with gap at most $r_0$.
\end{proof}

%\info{\textcolor{red}{When do we have this bound? TSSM, bounded degree sofic approximation (isn't always a bounded degree graph?}}

\section{Ordered sofic approximations}
\label{sec7}

%\subsection{Couplings}

%\begin{definition}
%A coupling of a collection of random variables $\left(X_{i}\right)_{i \in I}$ is a set of variables $\left(X_{i}^{\prime}\right)_{i \in I}$ on some common probability space with the given marginals, i.e. $X_{i}$ and $X_{i}^{\prime}$ have the same law.
%\end{definition}

%\begin{theorem}
%For any countable collection $\mathcal{S}$ of real random variables \textcolor{red}{notice that $2^\Gamma$ can be identified with $\R$}, there exists a coupling such that, for any $X, Y \in \mathcal{S}$
%$$
%\Pb(X \neq Y) \leq \frac{2 d_{\mathrm{TV}}(X, Y)}{1+d_{\mathrm{TV}}(X, Y)}.
%$$
%\end{theorem}

If $(\Omega, \mathcal{F}, \Pb)$ is a probability space, then a random variable on $\Omega$ is a measurable function $X: (\Omega, \mathcal{F}) \to S$ to a measurable space $S$ and the {\bf law} of $X$ is the probability measure $\Pb X^{-1}: S \to \R$ defined by $\Pb X^{-1}(s) = \Pb\left(X^{-1}(s)\right)$.

Given two random variables $X_1$ and $X_2$ defined on probability spaces $(\Omega_1, \mathcal{F}_1, \Pb_1)$ and $(\Omega_2, \mathcal{F}_2, \Pb_2)$, respectively, a {\bf coupling} of $X_{1}$ and $X_{2}$ is a new probability space $(\Omega, \mathcal{F}, \Pb)$ over which there are two random variables $Y_{1}$ and $Y_{2}$ such that $Y_{1}$ has the same distribution as $X_{1}$ while $Y_{2}$ has the same distribution as $X_{2}$.

The total variation distance between two random variables $X$ and $Y$ is defined as
$$
d_{\mathrm{T} V}(X, Y)=\sup _{A}\{|\Pb(X \in A)-\Pb(Y \in A)|\}
$$
where the supremum is over all (measurable) sets $A$. A classical theorem relating the total variation distance to couplings establishes that, for any two random variables $X$ and $Y$, there exists a coupling such that $\Pb(X \neq Y)=$ $d_{\mathrm{TV}}(X, Y)$. This coupling is known as the {\bf optimal coupling}.

\subsection{Random pasts}

Given a countable set $V$, consider a random variable $\past: \Omega \to 2^{V \times V}$ and its projections $\past^v: \Omega \to 2^{\{v\} \times V}$ for every $v \in V$, where we also understand $\past$ as a random function $\past: V \to 2^{V}$ and $\past^v$ as a random subset $\past^v \subseteq V$. We say that such a random variable $\past$ is a {\bf random past} on $V$ if, for almost every instance of $\past$,
\begin{enumerate}
\item for all $v \in V$ the condition $v \notin \past^v$ holds;
\item for all $u, v \in V$, if $u \in \past^{v}$, then $\past^u \subseteq \past^v$; and,
\item if $u \neq v$, then either $u \in \past^v$ or $v \in \past^u$,
\end{enumerate}
where we interpret $\past^v$ as the \emph{past of $v$}. Notice that a random past can be turned into a \emph{random order} $\prec$ of $V$ and vice versa by declaring $u \prec v$ if and only if $u \in \past^v$.

%\info{\textcolor{red}{Question: can we use ordered sofic approximations to obtain a Kieffer formula for arbitrary m.p.t.?}} 

If $\Gamma$ is a countable group, a random past $\past: \Gamma \to 2^\Gamma$ is {\bf invariant} if $\past^g$ has the same distribution as $\past^{1_\Gamma}g$. A deterministic invariant random past on a group $\Gamma$ is called {\bf algebraic past}. A group with an algebraic past is called {\bf orderable}.

%this is to say, there is a common probability space $(\Omega,\mathcal{F},\Pb)$ such that $\Pb(\past^g = \past^{1_\Gamma}g) = 1$, for every $g \in \Gamma$. 

%Morally, we would like to think that the set $\past_0$ represents $\past(1_\Gamma)$ for some invariant random past in $\Gamma$. We will see that this is necessarily the case.

\subsection{Ordered sofic approximations}

Let $\past$ be an invariant random past on $\Gamma$. Given $v \in V_n$ and $U \subseteq V_n$ we define $\sigma_v^{-1}(U) := \{g \in \Gamma: \sigma^g(v) \in U\} \subseteq \Gamma$ for $U \subseteq V_n$. Notice that $\sigma_v^{-1}\past_n^v: \Omega_n \to 2^\Gamma$ is measurable. We say that a sofic approximation $\Sigma = \{\sigma_n: \Gamma \to \Sym(V_n)\}_n$ of $\Gamma$ is {\bf ordered (relative to $\past$)} if there exists a sequence of random pasts $\past_n: \Omega_n \to 2^{V_n \times V_n}$ such that for every $F \Subset \Gamma$ and $\delta>0$, there exists $n_0 \in \N$ such that for every $n \geq n_0$ and for every $v \in V_n$, there exist couplings between $\past^{1_\Gamma}$ and $\sigma_v^{-1}(\past_n^v)$ such that
$$
|\{v \in V_n: \Pb_n^{v,F,\delta}(\past^{1_\Gamma} \cap F = \sigma_v^{-1}(\past_n^v) \cap F) \leq 1-\delta\}| \leq \delta |V_n|,
$$
where $\Pb_n^{v,F,\delta}$ denotes the probability measure associated to the coupling between $\past^{1_\Gamma}$ and $\sigma_v^{-1}(\past_n^v)$ for the parameters $F$ and $\delta$.
\begin{remark}
Notice that establishing that a sofic approximation $\Sigma$ is ordered relative to an invariant random past $\past$ only depends on the distribution of the random set $\past^{1_\Gamma}$, and, a priori, one could attempt to order a sofic approximation $\Sigma$ relative to any random set $\past_0$. However, due to the very same definition, it can be proven that if a sofic approximation $\Sigma$ is ordered relative to $\past_0$, then the random function $\tilde{\past}: V \to 2^{V}$ defined as $\tilde{\past}(g) = \past_0 g$ turns out to be an invariant random past, so essentially the only way to order a sofic approximation $\Sigma$ is relative to an invariant random past. We leave this fact as an exercise.
\end{remark}

\subsection{Examples}

Here we present two classes of invariant random pasts $\past$ for which there exist sofic approximations that can be ordered relative to them.

\subsubsection{Invariant random pasts on amenable groups}
\label{exmp:amen}

Given any invariant random past $\past$ on an amenable group $\Gamma$ and a F{\o}lner sequence $\{T_n\}_n$ for $\Gamma$, define $\past_n := \pi_n \circ \past$, where $\pi_n: 2^{\Gamma \times \Gamma} \to 2^{T_n \times T_n}$ denotes the natural projection. Notice that, if  $F \Subset \Gamma$ and $Fh \subseteq T_n$, then
\begin{align*}
\sigma_h^{-1}(\past_n^h) \cap F	&	=	\{g \in F: \sigma^g(h) \in (\pi_n \circ \past)^h\}	\\
							&	=	\{g \in F: gh \in T_n \cap \past^h\}			\\
							&	=	F \cap \past^h h^{-1},
\end{align*}
and since $\past^h$ has the same distribution as $\past^{1_\Gamma}h^{-1}$, it suffices to consider the optimal coupling $\Pb^h$ between $\past^h$ and $\past^{1_\Gamma}h^{-1}$ to obtain that
$$
\Pb^h(\past^{1_G} \cap F = \sigma_h^{-1}(\past_n^h) \cap F) = 1,
$$
for every $Fh \subseteq T_n$. Then, by amenability, $\lim_n |T_n|^{-1}|\{h \in T_n: Fh \subseteq T_n\}| = 1$, so for every $F \Subset \Gamma$ and for every $\delta > 0$, there exists $n_0 \in \N$ such that for every $n \geq n_0$, there exists a family of couplings $\{\Pb^h\}_{h \in \Gamma}$ with
$$
|\{h \in T_n: \Pb^h(\past^{1_G} \cap F = \sigma_h^{-1}(\past_n)^h \cap F) = 1\}| \leq \delta |T_n|,
$$
which is enough to conclude that the sofic approximation associated to $\{T_n\}_n$ can be ordered relative to $\past$.

%
% we can define $\past_n: F_n \to 2^{F_n}$ given by $\past_n^h := \past(h) \cap F_n$ and define $\Pb^h_n$ to be the optimal coupling between $\past(h)$ and $\past(1_\Gamma)h$. Then, if $\past_0 = \past(1_\Gamma)$, we have that, for every $g \in \Gamma$,
%\begin{align*}
%\#\left\{h \in F_n: \Pb_n^h\left((g \cdot \past_0)(1_\Gamma) =  (g \cdot \past^h_n)(h)\right) \geq 1-\delta\right\}	&	\geq	\#\left\{h \in F_n \cap g^{-1} F_n: \Pb_n^h\left((g \cdot \past_0)(1_\Gamma) =  (g \cdot \past^h_n)(h)\right) = 1\right\}	\\
%																					&	\geq	\#\left\{h \in F_n \cap g^{-1} F_n: \Pb_n^h\left(\past_0(g) =  \past^h_n(gh)\right) = 1\right\}	\\
%																					&	=	|F_n \cap g^{-1} F_n|,
%\end{align*}
%since, $\Pb^h_n$-almost surely, we have that $\past^h_n(gh) = \past(1_G)h \cap F_n$. Then, since $\{F_n\}_n$ is a F{\o}lner sequence, for every $\delta > 0$, it holds that $|F_n \cap g^{-1} F_n| \geq (1-\delta)|F_n|$ for sufficiently large $n$.

%$$
%\past_n^h(gh) = 1 \iff gh \in \past_n^h \iff gh \in (\past(1_G)h \cap F_n) \iff g \in \past(1_G) \cap F_nh^{-1} \iff g \in \past(1_G) \land h \in g^{-1}F_n.
%$$
%
%$$
%\past_n^h(\sigma_n^g(h))) = \past_n^h(gh),
%$$
%so
%
%
%
%
%Therefore,
%$$
%\Pb_g(\past(g) = \past_n^h(\sigma_n^g(h))) = \Pb_g(\past(g) = \past_n^h(gh)) = \Pb_g(\past(g) = ghh^{-1}\past(1_G)) = 1.
%$$
%
%\textcolor{red}{The idea seems right, but we need to arrange things}
%We claim that an amenable group $\Gamma$ with invariant random past $\past$ has a sofic ordered approximation $\Sigma^\past$. Indeed, consider the sofic approximation given in Example ???

\subsubsection{Percolation past coupling}

Given a countable set $V$, we define the \emph{percolation past} $\past_{\mathrm{perc}}: V \to 2^{V}$ to be the random past induced by an i.i.d. collection $\{\chi_{v}\}_{v \in V}$ of random variables with uniform distribution on $[0,1]$ and $\past^v_{\mathrm{perc}} = \{u \in V: \chi_{u} < \chi_{v}\}$. Notice that if we consider this random past in a countable group $\Gamma$ it turns out to be invariant.

%Without loss of generality, we can suppose that there exists a sequence $\{F_n\}_n$ of finite subsets of $\Gamma$ that exhausts it and such that $\sigma_n$ is $(F_n,\delta)$-multiplicative and $(F_n,\delta)$-trace-preserving.

Let $\Sigma$  be an arbitrary sofic approximation to $\Gamma$. Then, given $F \Subset \Gamma$ and $\delta>0$, consider $n_0 \in \N$ such that for every $n \geq n_0$, $|V_n \setminus V_n^F| \leq \delta V_n$. Next, if $v \in V^F_n$, define $\Pb_n^{v,F}$ to be the \emph{diagonal coupling} $\Pb_n^{v,F}$ between $\{\chi_g\}_{g \in F}$ and $\{\chi_{\sigma_n^g(v)}\}_{g \in F}$ (i.e., $\chi_g$ and $\chi_{\sigma_n^g(v)}$ are identical almost surely) and, if $v \in V_n \setminus V^F_n$, define $\Pb_n^{v,F}$ to be the \emph{independent coupling} (i.e., $\chi_g$ and $\chi_{\sigma_n^g(v)}$ are independent). Then,
$$
|\{v \in V_n: \Pb_n^{v,F}(\past_{\mathrm{perc}}^{1_\Gamma} \cap F = \sigma_v^{-1}(\past_n^v) \cap F) < 1 \}| \leq \delta |V_n|,
$$
and we conclude. Notice that this shows that any sofic approximation can be ordered relative to the percolation past.

%\info{See Theorem \ref{thm:yinon}; Pairwise optimal coupling of multiple random variables Omer Angel Yinon Spinka}

\begin{question}
Given an invariant random past $\past$ on a sofic group, when does it exist a sofic approximation $\Sigma$ that can be ordered relative to $\past$?
%\info{\textcolor{red}{Question: does the converse hold, i.e., for any invariant random past $\past$ there exists an ordered sofic approximation $\Sigma^\past$?}}
\end{question}

%
%{\bf Amenable groups:} Given an amenable group $\Gamma$, a F{\o}lner sequence $\{F_n\}$ for it, and $g \in \Gamma$, consider the permutation $\sigma_n(g): F_n \to F_n$ given by
%$$
%\sigma_n(g)(h) = 
%\begin{cases}
%gh		&	\text{ if } h \in F_n \cap g^{-1} F_n,		\\
%\rho_n(h)	&	\text{ if } h \in F_n \setminus g^{-1} F_n,
%\end{cases}
%$$
%where $\rho_n: F \backslash g^{-1} F_n \rightarrow F_i \backslash g F_n$ is an arbitrary bijection. Then, the sequence $\Sigma = \left\{\sigma_{n}: \Gamma \to \mathrm{Sym}\left(F_{n}\right)\right\}$ is a sofic approximation to $\Gamma$, since $\lim_n \frac{|F_n \setminus g^{-1} F_n|}{|F_n|} = 0$.
%
%Let $\past$ be random subset of $\Gamma \setminus \{1_\Gamma\}$. We say that $(\Gamma,\past)$ has an ordered sofic approximation if there exists a sofic approximation $\Sigma^\past$ of $\Gamma$ and a sequence of random pasts $\past_n: V_n \to 2^{V_n}$ and a sequence of ``local couplings'' $\{\Pb_n^v\}_{v \in V}$ between $\past$ and $\past_n$ such that for any finite set $F \subseteq \Gamma$ and $\delta>0$,
%$$
%1-\delta<|V_n|^{-1} \#\{v \in V_n: \Pb_n^v(\past(g) =  \past_n(v)(\sigma_n(g)(v))) >  1-\delta\} \quad \text{ for all } g \in F.
%$$

\section{Kieffer-Pinsker type formulas for pressure}
\label{sec8}

\subsection{Information and Shannon entropy}

Let's consider a sub-$\sigma$-algebra $\mathcal{F}$ of the Borel $\sigma$-algebra $\mathcal{B}$ in $X$ and a partition $\alpha$ of $X$. Given $\nu \in \Prob(X)$, we define the {\bf conditional information function} $I(\alpha | \mathcal{F}): X \to \R$ as
$$
I_\nu(\alpha | \mathcal{F})(x)=-\log \nu\left(A_{x} | \mathcal{F}\right)(x),
$$
where $A_{x}$ is the atom of $\alpha$ containing $x$. The {\bf conditional Shannon entropy of $\alpha$ with respect to $\mathcal{F}$} is defined as
$$
H_\nu(\alpha \vert \mathcal{F})=\int{I_\nu(\alpha | \mathcal{F})}d\nu.
$$

%In other words, it is an $\mathcal{F}$-measurable function such that for all $\mathcal{F}$-measurable functions $f: X \to \R$
%\[
%\int_{X} \mu(A | \mathcal{F})(x) f(x) d \mu(x)=\int_{X} \chi_{A}(x) f(x) d \mu(x)
%\]
If $\beta$ is a partition, then we identify $\beta$ with the $\sigma$-algebra equal to the set of all unions of partition elements of $\beta$. Through this identification, $I(\alpha | \beta)$ and $H(\alpha | \beta)$ are well-defined. We denote by $H_\nu(\alpha)$ the conditional entropy of $\alpha$ with respect to the trivial $\sigma$-algebra $\{X,\emptyset\}$, i.e., $H_\nu(\alpha) := H_\nu(\alpha \vert \{X,\emptyset\})$. This corresponds to the usual Shannon entropy of $(\nu(A))_{A \in \alpha}$, i.e., $H_\nu(\alpha) = -\sum_{A \in \alpha}\nu(A)\log \nu(A)$. In \cite{1-alpeev}, it was proven the following theorem.

\begin{theorem}
Let $\Gamma \acts (X, \nu)$ be a probability measure preserving action of a countable amenable group $\Gamma$ and let $\past: \Gamma \rightarrow 2^{\Gamma}$ be an invariant random past on $\Gamma$. Suppose that $\alpha$ is a Borel partition with $H_{\mu}(\alpha) < +\infty$ and that $\mathcal{A}$ is a $\Gamma$-invariant $\sigma$-algebra on $X$. Then the following holds:
$$
h({\Gamma} \acts (X, \mu), \alpha | \mathcal{A}) = \mathbb{E}_{\past} H_{\mu}(\alpha | \alpha^{\past^{1_\Gamma}} \vee \mathcal{A}),
$$
where $h({\Gamma} \acts (X, \mu), \alpha | \mathcal{A})$ denotes the Kolmogorov-Sinai entropy of the partition $\alpha$ relative to $\mathcal{A}$.
\end{theorem}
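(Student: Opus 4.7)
The plan is to combine the classical F\o lner characterization of the Kolmogorov-Sinai entropy with the chain rule for conditional entropy, using the invariant random past to provide random orderings of each F\o lner set. First I would fix a left F\o lner sequence $\{T_n\}_n$ for $\Gamma$, so that
$$h(\Gamma \acts (X,\mu), \alpha \vert \mathcal{A}) = \lim_n \frac{1}{|T_n|} H_\mu(\alpha^{T_n} \vert \mathcal{A}),$$
and recall that, for any linear order on $T_n$, the chain rule yields
$$H_\mu(\alpha^{T_n} \vert \mathcal{A}) = \sum_{g \in T_n} H_\mu(\alpha^g \vert \alpha^{P_g^{T_n}} \vee \mathcal{A}),$$
where $P_g^{T_n}$ denotes the set of elements of $T_n$ preceding $g$. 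The key observation is that every realization of $\past$ linearly orders $\Gamma$ and hence $T_n$, so setting $P_g^{T_n} := \past^g \cap T_n$ and taking expectation gives
$$H_\mu(\alpha^{T_n} \vert \mathcal{A}) = \mathbb{E}_\past \sum_{g \in T_n} H_\mu(\alpha^g \vert \alpha^{\past^g \cap T_n} \vee \mathcal{A}).$$

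Next I would exploit $\Gamma$-invariance of both $\mu$ and $\mathcal{A}$ to shift each summand by $g^{-1}$, turning $\alpha^g$ into $\alpha$ and the conditioning into $\alpha^{(\past^g \cap T_n)g^{-1}} \vee \mathcal{A}$. By the definition of invariance of the random past, $\past^g$ has the same distribution as $\past^{1_\Gamma}g$, so $(\past^g \cap T_n)g^{-1}$ is distributed as $\past^{1_\Gamma} \cap T_ng^{-1}$. This reduces the identity to
$$\frac{H_\mu(\alpha^{T_n} \vert \mathcal{A})}{|T_n|} = \frac{1}{|T_n|} \sum_{g \in T_n} \mathbb{E}_\past H_\mu\bigl(\alpha \bigm\vert \alpha^{\past^{1_\Gamma} \cap T_ng^{-1}} \vee \mathcal{A}\bigr),$$
and the problem is now to show that the right-hand side converges to $\mathbb{E}_\past H_\mu(\alpha \vert \alpha^{\past^{1_\Gamma}} \vee \mathcal{A})$ as $n \to \infty$. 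For any finite $F \subseteq \Gamma$, the F\o lner property guarantees that $F \subseteq T_ng^{-1}$ for all but a vanishing proportion of $g \in T_n$, so $\past^{1_\Gamma} \cap T_ng^{-1}$ eventually contains any fixed finite window of $\past^{1_\Gamma}$. An application of the increasing martingale theorem to the conditional expectations $\mu(A \vert \alpha^{\past^{1_\Gamma} \cap T_ng^{-1}} \vee \mathcal{A})$ for atoms $A$ of $\alpha$ then yields the pointwise convergence of the conditional entropies along the interior of $T_n$.

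The main obstacle I anticipate is justifying the interchange of the three limiting operations involved: the limit in $n$, the average over $g \in T_n$, and the expectation over $\past$. The hypothesis $H_\mu(\alpha) < +\infty$ provides an integrable envelope, since each summand is dominated by $H_\mu(\alpha)$, which permits the use of the Dominated Convergence Theorem with respect to $\past$. To control the ``boundary'' contributions coming from $g$ near $\partial T_n$ one separates $T_n$ into an $F$-interior and a vanishing remainder, bounding the latter by $H_\mu(\alpha)$ times the F\o lner ratio $|T_n \setminus F^{-1}T_n|/|T_n|$. Combining these ingredients with the convergence on the interior delivers the desired identity.
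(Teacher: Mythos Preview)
The paper does not prove this theorem; it is quoted from \cite{1-alpeev} (Alpeev--Meyerovitch--Ryu) and stated without proof. So there is no ``paper's own proof'' to compare against.

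That said, your outline is a correct proof strategy and is essentially the standard route. A couple of remarks that would tighten it. First, the monotonicity of conditional entropy already gives you one inequality for free: since $\past^{1_\Gamma}\cap T_n g^{-1}\subseteq\past^{1_\Gamma}$, each summand on the right is bounded below by $H_\mu(\alpha\mid\alpha^{\past^{1_\Gamma}}\vee\mathcal{A})$, so the $\liminf$ inequality is immediate and you only need to work for the upper bound. Second, for the upper bound you do not need a pointwise martingale argument along the varying sets $T_n g^{-1}$ (which are not nested); it is cleaner to first fix a finite $F\Subset\Gamma$, use monotonicity again to bound the interior terms by $H_\mu(\alpha\mid\alpha^{\past^{1_\Gamma}\cap F}\vee\mathcal{A})$, average and let $n\to\infty$, and only then let $F\uparrow\Gamma$ using the increasing martingale theorem plus dominated convergence (with envelope $H_\mu(\alpha)$) for the $\past$-expectation. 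This is exactly the two-step $\epsilon$-argument you describe at the end; I would just make explicit that the martingale convergence is applied along a fixed exhaustion $\{F_r\}$ rather than along $T_n g^{-1}$.
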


In particular, if $\mathcal{A}$ is trivial, the previous theorem tells us that $h({\Gamma} \acts (X, \mu), \alpha) = \mathbb{E}_{\past} H_{\mu}(\alpha | \alpha^{\past^{1_\Gamma}})$. We aim to take this formula as a model for developing new formulas for entropy and pressure in the sofic and symbolic setting.

\subsection{Random information functions}

Suppose that $X$ satisfies the TSSM property and $\mu$ is the unique Gibbs measure for $\phi$ that satisfies SSM. Consider an exhaustion $\{F_r\}_r$ and suppose that $\alpha$ is the canonical partition of $X$, i.e., $\alpha = \{[x_{1_\Gamma}]: x \in X\}$. Given $r \in \N$, define the function $f_r: X \times 2^\Gamma \to \R$ given by $f_r(x,D) = I_\mu\left(\alpha | \alpha^{D \cap F_r}\right)(x)$, i.e.,
$$
f_r(x,D) = -\log \mu\left([x_{1_\Gamma}] \vert [x_{D \cap F_r}]\right) = \sum_{S \subseteq F_r} -\log\mu([x_{1_\Gamma}] \vert x_{S}]) 1_{\{D: S = D \cap F_r\}}.
$$

Notice that each function $f_r$ is continuous: $1_{\{D: S = D \cap F_r\}}$ is the indicator function of a clopen set; by SSM, $\mu([x_{1_\Gamma}] \vert x_{S}])$ and $\mu([y_{1_\Gamma}] \vert y_{S}])$ will be close in value if $x_{F_r} = y_{F_r}$ for large enough $r$; by the TSSM property, $\mu([x_{1_\Gamma}] \vert x_{S}]) \geq c(\phi)> 0$; and $\log: [c,+\infty) \to \R$ is Lipschitz (with Lipschitz constant $1/c$) for any $c > 0$.

In addition, for a fixed $(x,D) \in X \times 2^\Gamma$, by SSM, the sequence $\{f_r(x,D)\}_r$ is Cauchy. Therefore, the pointwise limit $f(x,D) := \lim_r f_r(x,D)$ exists everywhere. Moreover, $f$ is the uniform limit of $\{f_r\}_r$, since, for every $r_0 \in \N$ and $r,m \geq r_0$, by Lemma \ref{lem:ssm1} and the Lipschitz property of $\log(\cdot)$,
\begin{align*}
|f_r(x,D) - f_m(x,D)|	&	=	|\log \mu\left([x_{1_\Gamma}] \vert [x_{D \cap F_r}]\right) - \log \mu\left([x_{1_\Gamma}] \vert [x_{D \cap F_m}]\right)|		\\
				&	\leq	\frac{1}{c(\phi)}|\mu\left([x_{1_\Gamma}] \vert [x_{D \cap F_r}]\right) - \mu\left([x_{1_\Gamma}] \vert [x_{D \cap F_m}]\right)|	\\
				&	\leq	\frac{3}{c(\phi)}\beta(r_0),
\end{align*}
which can be made arbitrarily small uniformly in $x$ and $D$. Therefore, $f(x,D)$ is defined everywhere, non-negative, and continuous.

Given a random set $\mathcal{S} \subseteq 2^\Gamma$, we define the {\bf $\mathcal{S}$-random $\mu$-information function} as $I^{\mathcal{S}}_\mu: X \to \R$, with
$$
I_\mu^{\mathcal{S}}(x) := \int{I_{\mu}(x,D})d\lambda_{\mathcal{S}}(D),
$$
where $\lambda_{\mathcal{S}}$ denotes the law of $\mathcal{S}$. Notice that $I_\mu^{\mathcal{S}}$ is also continuous. Indeed, due to SSM,
$$
\left|I_\mu^{\mathcal{S}}(x) - I_\mu^{\mathcal{S}}(y)\right| \leq \int{|I_{\mu}(x,D) - I_{\mu}(y,D)|}d\lambda_{\mathcal{S}}(D) \leq \int{\frac{\beta(r)}{c(\phi)}}d\lambda_{\mathcal{S}}(D) = \frac{\beta(r)}{c(\phi)},
$$
provided $x$ and $y$ agree on $F_r$. We summarize these observations in the following proposition.

\begin{proposition}
Let $\mu \in \Prob(X)$ be a Gibbs measure for a finite range potential $\phi$. Suppose that $\phi$ satisfies SSM and $X$ satisfies the TSSM property. Then, for every random set $\mathcal{S}$, the $\mathcal{S}$-random $\mu$-information function $I^\mathcal{S}_\mu$ is defined everywhere and continuous. 
\end{proposition}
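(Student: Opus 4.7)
The plan is to prove three facts in sequence: (i) for each $r\in\N$, the map $f_r:X\times 2^\Gamma\to\R$ given by $f_r(x,D)=-\log\mu([x_{1_\Gamma}]\vert[x_{D\cap F_r}])$ is well-defined, uniformly bounded, and jointly continuous; (ii) the sequence $\{f_r\}_r$ converges uniformly on $X\times 2^\Gamma$ to a continuous limit $f$; (iii) integration against $\lambda_\mathcal{S}$ preserves boundedness and continuity, yielding the desired $I_\mu^\mathcal{S}$.

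For (i), the crucial ingredient is Lemma \ref{lem:unifbound}: because $X$ satisfies the TSSM property and $\phi$ has finite range, $\mu([x_{1_\Gamma}]\vert[x_S])\geq c(\phi)>0$ uniformly in $x\in X$ and finite $S\subseteq\Gamma\setminus\{1_\Gamma\}$. This immediately bounds $f_r$ above by $-\log c(\phi)$ and, more importantly, confines the argument of the logarithm to the interval $[c(\phi),1]$, where $\log$ is $1/c(\phi)$-Lipschitz. For continuity, I decompose $f_r$ as a finite sum indexed by the subsets $S\subseteq F_r$: each summand is the product of the indicator $1_{\{D:D\cap F_r=S\}}$ (a clopen function of $D$ in the product topology on $2^\Gamma$) and the map $x\mapsto-\log\mu([x_{1_\Gamma}]\vert[x_S])$. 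The latter depends only on $x_{MS\cup\{1_\Gamma\}}$ by the Markovian property of the specification $\gamma$, hence is continuous on $X$.

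For (ii), I apply Lemma \ref{lem:ssm1} to the two conditioning patterns $x_{D\cap F_r}$ and $x_{D\cap F_m}$ for $r\leq m$: they share the same restriction to $F_r\cap D$, so the lemma yields $\bigl|\mu([x_{1_\Gamma}]\vert[x_{D\cap F_r}])-\mu([x_{1_\Gamma}]\vert[x_{D\cap F_m}])\bigr|\leq 3\beta(r)$. Combined with the Lipschitz bound on $\log$, this gives $|f_r(x,D)-f_m(x,D)|\leq 3\beta(r)/c(\phi)$, uniformly in $(x,D)$. Since $\beta(r)\to 0$, $\{f_r\}_r$ is uniformly Cauchy in $\mathcal{C}(X\times 2^\Gamma)$, producing a bounded continuous uniform limit $f(x,D)=I_\mu(x,D)$.

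For (iii), $I_\mu^\mathcal{S}(x)=\int f(x,D)\,d\lambda_\mathcal{S}(D)$ is defined everywhere by boundedness of $f$ and measurability in $D$. To get continuity in $x$, I use Lemma \ref{lem:ssm1} one more time: if $x_{F_r}=y_{F_r}$, then the conditioning patterns $x_D$ and $y_D$ agree on $D\cap F_r$, so $|f(x,D)-f(y,D)|\leq 3\beta(r)/c(\phi)$ uniformly in $D$, and therefore $|I_\mu^\mathcal{S}(x)-I_\mu^\mathcal{S}(y)|\leq 3\beta(r)/c(\phi)$, which tends to $0$ as $r\to\infty$. The only real obstacle is arranging that the SSM comparison bound is simultaneously uniform in both the conditioning set and the base point, but Lemma \ref{lem:ssm1} is tailored precisely for this, and the TSSM-induced positive lower bound $c(\phi)$ is what lets me pass from probability estimates to log-estimates without losing uniformity.
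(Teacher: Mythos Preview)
Your proposal is correct and follows essentially the same route as the paper: the paper also defines the finite approximants $f_r$, decomposes them as a sum over subsets $S\subseteq F_r$, uses the TSSM-based bound $c(\phi)>0$ to make $\log$ Lipschitz, invokes Lemma~\ref{lem:ssm1} to get the uniform Cauchy estimate $|f_r-f_m|\leq 3\beta(r_0)/c(\phi)$, and then integrates against $\lambda_{\mathcal{S}}$. One cosmetic remark: for the continuity of $x\mapsto\mu([x_{1_\Gamma}]\vert[x_S])$ with $S\Subset\Gamma$ you do not need the Markovian property of the specification---this is just a ratio of cylinder probabilities depending only on $x_{S\cup\{1_\Gamma\}}$, hence locally constant; the paper phrases this step via SSM, but either way the conclusion is immediate.
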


We will be mainly interested in $\mathcal{S}$-random $\mu$-information functions induced by random pasts. Given a random past $\past$, we will denote by $\lambda_{\past}$ the law of $\past$, by $\lambda^v_{\past}$ the law of $\past^v$, and write
$$
\mathbb{E}_{\past}[X] = \int{X}d\lambda_{\past},
$$
for any random variable $X$. Then, we define $I_\mu^\past$, the $\past$-random $\mu$-information function, as
$$
I_\mu^\past(x) = \mathbb{E}_{\past}[I_{\mu}(x,\past^{1_\Gamma})] = \mathbb{E}_{\past^{1_\Gamma}}[I_{\mu}(x,\past^{1_\Gamma})].
$$

In addition, if $F = F_r$, we denote by $\Pb_n^{v,r,\delta}$ the coupling $\Pb_n^{v,F_r,\delta}$ between $\past^{1_\Gamma}$ and $\sigma_v^{-1}(\past_n^v)$, and write
$$
\mathbb{E}_{n,v,r,\delta}[X] = \int{X}d\Pb_n^{v,r,\delta},
$$
for any random variable $X$.

\subsection{Main theorem}

%\textcolor{red}{Recall that the Ergodic Decomposition Theorem states that for any action $\Gamma \acts (X, \nu)$ such that $\nu$ is $\Gamma$-invariant, there exists a map $x \mapsto \nu_{x}$ from $X$ to the set of $\Gamma$-invariant ergodic probability measures on $X$, such that
%$$
%\nu = \int \nu_{x} d \nu(x).
%$$
%
%\textcolor{red}{By the Ergodic Decomposition Theorem, without loss of generality we can suppose that $\nu$ is ergodic.} }

Considering all the previous discussion, we have the main theorem of this work.

\begin{theorem}
\label{thm:main}
Suppose that $X$ satisfies the TSSM property and there is a unique Gibbs measure $\mu$ for $\phi$ that satisfies SSM. Then, for every invariant random past $\past$ and for every sofic approximation $\Sigma$ that can be ordered relative to $\past$,
$$
p_{\Sigma}(\Gamma \acts X,\phi) = \int{(I^\past_{\mu} + \phi)}d\nu,
$$
for all $\nu \in \Prob(X,\Gamma)$ such that $h_\Sigma(\Gamma \acts (X,\nu)) \neq -\infty$.
\end{theorem}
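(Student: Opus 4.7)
My plan is to combine the partition-function formula for pressure from \thmref{thm:partition} with the chain-rule decomposition of $-\log \mu_n$ along a random ordering of $V_n$, and then transport this decomposition to $X$ using SSM, local weak* convergence, and the ordering property of $\Sigma$ relative to $\past$.

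To begin, fix $\nu \in \Prob(X,\Gamma)$ with $h_\Sigma(\Gamma \acts (X,\nu)) \neq -\infty$. By \lemref{lem:Xn}, there exists a sequence $\fx_n \in X^n$ such that $P_{\fx_n}^{\sigma_n} \to \nu$ weak*. The starting identity is the tautology
\begin{equation*}
\log Z_n = -\log \mu_n(\fx_n) + \fEn_n(\fx_n),
\end{equation*}
which when divided by $|V_n|$ gives $\frac{\log Z_n}{|V_n|} = -\frac{\log \mu_n(\fx_n)}{|V_n|} + \int \phi \, dP_{\fx_n}^{\sigma_n}$. The second term converges to $\int \phi \, d\nu$ by weak* convergence. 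So the crux is to prove that $-|V_n|^{-1} \log \mu_n(\fx_n) \to \int I_\mu^\past \, d\nu$. Combined with Theorem \ref{thm:partition} and the resulting existence of $\lim_n |V_n|^{-1} \log Z_n$, this yields the claim.

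For the crucial term, I would invoke the chain rule with respect to a realization of the random past $\past_n$ on $V_n$ (which exists since $\Sigma$ is ordered relative to $\past$): for any realization
\begin{equation*}
-\log \mu_n(\fx_n) = \sum_{v \in V_n} -\log \mu_n\!\bigl([(\fx_n)_v] \,\big|\, [(\fx_n)_{\past_n^v}]\bigr),
\end{equation*}
and therefore also $-\log \mu_n(\fx_n) = \mathbb{E}_{\past_n} \sum_v -\log \mu_n([(\fx_n)_v] \mid [(\fx_n)_{\past_n^v}])$. I would then bound each summand, w.h.p.\ in $v \in V_n$, by a four-step chain of approximations, choosing a large scale $r$ and a small $\epsilon$: first, \lemref{lem:good} replaces $\past_n^v$ by $\past_n^v \cap \sigma_n^{F_r}(v)$ up to error $\epsilon$ (using SSM together with the uniform lower bound $c(\phi) > 0$ from \lemref{lem:unifbound} to control $\log$); second, \lemref{lem:cond} transfers the resulting conditional probability under $\mu_n$ to one under $\mu$ via the pullback $\pull_v^{\sigma_n, r}$; third, the ordering hypothesis supplies a coupling of $\sigma_v^{-1}(\past_n^v) \cap F_r$ with $\past^{1_\Gamma} \cap F_r$ that agrees with probability at least $1-\delta$ w.h.p.\ in $v$, so taking expectation over $\past_n$ swaps the former for the latter at the cost of another $\epsilon$; fourth, the definition of $I_\mu^\past$ and the uniform convergence $f_r \to I_\mu$ proved just before the theorem replace the truncated information $f_r(\pull_v^{\sigma_n}(\fx_n), \past^{1_\Gamma})$ by $I_\mu(\pull_v^{\sigma_n}(\fx_n), \past^{1_\Gamma})$, and its integral against $\lambda_\past$ by $I_\mu^\past(\pull_v^{\sigma_n}(\fx_n))$. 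Averaging over $v$ produces $\int I_\mu^\past \, dP_{\fx_n}^{\sigma_n}$, and weak* convergence together with the continuity of $I_\mu^\past$ yields the desired limit $\int I_\mu^\past \, d\nu$.

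The main obstacle is controlling the ``exceptional'' vertices where any one of the four approximations fails, and turning the w.h.p.\ statements into a uniform estimate on the average. Since each individual summand is bounded in absolute value by $-\log c(\phi)$ (uniformly in $n$, $v$, and the realization), the contribution from the $\delta|V_n|$ exceptional vertices can be made as small as wished in the parameter $\delta$; this is where the uniform bound of \lemref{lem:unifbound} and the boundedness of $I_\mu^\past$ (a consequence of SSM and $c(\phi) > 0$) become essential, and it also justifies interchanging the expectation $\mathbb{E}_{\past_n}$ with the sum over $v$ and with the limit $n \to \infty$. A secondary but purely bookkeeping step is to choose the parameters in the correct order (first $r$ large so $\beta(r)$ is small, then $n_0$ large enough that Lemmas \ref{lem:good} and \ref{lem:cond} and the ordering of $\Sigma$ all apply with the chosen tolerances) so that the telescoping error bounds at the four stages combine to a quantity $o(1)$ as $n \to \infty$.
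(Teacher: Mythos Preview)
Your proposal is correct and follows essentially the same route as the paper: start from the identity $\log Z_n = -\log\mu_n(\fx_n) + \fEn_n(\fx_n)$ with $\fx_n\in X^n$ chosen via \lemref{lem:Xn}, expand $-\log\mu_n(\fx_n)$ by the chain rule along $\past_n$, and then pass through the same four approximations (truncation via \lemref{lem:good}, transfer $\mu_n\leadsto\mu$ via \lemref{lem:cond}, swap of pasts via the coupling from the ordering hypothesis, and removal of the truncation via SSM), controlling the exceptional vertices with the uniform bound $c(\phi)>0$. The only cosmetic difference is that the paper performs the coupling step before the $\mu_n\leadsto\mu$ transfer rather than after, but the two orderings are interchangeable and the argument is otherwise the same.
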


\begin{proof}
Since $h_\Sigma(\Gamma \acts (X,\nu)) \neq -\infty$, there exists a sequence $\fx_n \in A^{V_n}$ such that $P_{\fx_n}^{\sigma_n}$ weak* converges to $\nu$. Moreover, by Lemma \ref{lem:Xn}, we can assume that $\fx_n \in X^n$ for all $n$.

%\info{\textcolor{blue}{Does uniqueness of Gibbs measure implies ergodicity in the sofic setting? If a Gibbs measure is unique, then is ergodic (Georgii, Corollary 7.4).}}
%\info{\textcolor{red}{Compare with the Kantorovich distance definition in Alpeev's paper}}

Recall that $\mu_{n}(\fx) = 1_{X^n}(\fx) Z_{n}^{-1} \exp \left\{\fEn_{n}(\fx)\right\}$ and the support of $\mu_n$ is $X^n$. In particular, for every $\fx_n \in X^n$,
$$
\log Z_{n} = -\log\mu_{n}(\fx_n) + \sum_{v \in V_n} \phi\left(\pull_{v}^{\sigma_n}(\fx_n)\right).
$$

We proceed to study the term $-\log\mu_{n}(\fx_n)$. Notice that for almost all instances of $\past_n$, it is induced a total order $\prec$ of $V_n$ with $u \prec v$ if and only if $\past_n^u \subseteq \past_n^v$. Then, for any such an instance, we have that
$$
\mu_n(\fx) = \prod_{v \in V_n} \mu_{n}(\fx_v \vert \fx_{\past_n^v}).
$$

Then,
\begin{align*}
-\log\mu_{n}(\fx)	&	=	\mathbb{E}_{\past_n} [-\log \mu_n(\fx)]	\\
				&	=	\mathbb{E}_{\past_n}[\sum_{v \in V_n} -\log\mu_{n}([\fx_v] \vert [\fx_{\past_n^v}])]	\\
				&	=	-\sum_{v \in V_n} \mathbb{E}_{\past_n}[\log\mu_{n}([\fx_v] \vert [\fx_{\past_n^v}])]	\\
				&	=	-\sum_{v \in V_n} \mathbb{E}_{\past_n^v}[\log\mu_{n}([\fx_v] \vert [\fx_{\past_n^v}])].
\end{align*}

Fix $\epsilon > 0$. By Lemma \ref{lem:good}, we can choose $r \in \N$ so that $\beta(r) \leq \epsilon$ and $v \in V^{r,\epsilon}_n$, w.h.p. in $v \in V_n$. Then, for every $v \in V^{r,\epsilon}_n$,
$$
\mu_{n}([\fx_v] \vert [\fx_{\past_n^v}]) \approx_\epsilon \mu_{n}([\fx_v] \vert [\fx_{\past_n^v \cap \sigma_n^{F_r}(v)}]),
$$
and, by Lemma \ref{lem:unifbound} and the Lipschitz property of $\log(\cdot)$, for every $v \in V^{r,\epsilon}_n$,
$$
\log\mu_{n}([\fx_v] \vert [\fx_{\past_n^v}]) \approx_{\frac{\epsilon}{c(\phi)}} \log\mu_{n}([\fx_v] \vert [\fx_{\past_n^v \cap \sigma_n^{F_r}(v)}]).
$$

On the other hand, again by Lemma \ref{lem:unifbound}, for every $v \in V_n \setminus V^{r,\epsilon}_n$,
$$
0 \leq -\log\mu_{n}([\fx_v] \vert [\fx_{\past_n^v}]) \leq -\log c(\phi),
$$
and, if we assume $n$ is large enough so that $|V_n \setminus V^{r,\epsilon}_n| \leq \epsilon |V_n|$, then
\begin{align*}
\frac{1}{|V_n|} \log\mu_{n}(\fx)	&	\approx_{\epsilon(\frac{1}{c(\phi)} - 2\log c(\phi))} \frac{1}{|V_n|} \sum_{v \in V_n} \mathbb{E}_{\past_n^v}[\log\mu_{n}([\fx_v] \vert [\fx_{\past_n^v \cap \sigma_n^{F_r}(v)}])].
\end{align*}

Notice that, for all $v \in V_n^r$, we have that $\mu_{n}([\fx_{\sigma_n^{F_r}(v)}]) = (\Pi_v^{\sigma_n, r})_*\mu_{n}([\Pi_v^{\sigma_n, r}(\fx)_{F_r}])$. In particular, if $\past^{1_\Gamma} \cap F_r = \sigma_n^{-1}(\past_n^v) \cap F_r$, it follows that
$$
\mu_{n}([\fx_v] \vert [\fx_{\past_n^v \cap \sigma_n^{F_r}(v)}]) = (\Pi_v^{\sigma_n, r})_*\mu_{n}([\Pi_v^{\sigma_n, r}(\fx)_{1_\Gamma}] \vert [\Pi_v^{\sigma_n, r}(\fx)_{\past^{1_\Gamma} \cap F_r}]),
$$
so, w.h.p. in $v \in V_n$,
$$
\Pb_n^{v,r,\epsilon}(\mu_{n}([\fx_v] \vert [\fx_{\past_n^v \cap \sigma_n^{F_r}(v)}]) \neq (\Pi_v^{\sigma_n, r})_*\mu_{n}([\Pi_v^{\sigma_n, r}(\fx)_{1_\Gamma}] \vert [\Pi_v^{\sigma_n, r}(\fx)_{\past^{1_\Gamma} \cap F_r}])) \leq \epsilon.
$$

Now, since $\mu_n$ locally weak* converges to $\mu$ and thanks to Lemma \ref{lem:cond}, we have that, w.h.p. in $v \in V_n$,
$$
(\Pi_v^{\sigma_n, r})_*\mu_{n}([\Pi_v^{\sigma_n, r}(\fx)_{1_\Gamma}] \vert [\Pi_v^{\sigma_n, r}(\fx)_{\past^{1_\Gamma} \cap F_r}]) \approx_\epsilon \mu([\Pi_v^{\sigma_n, r}(\fx)_{1_\Gamma}] \vert [\Pi_v^{\sigma_n, r}(\fx)_{\past^{1_\Gamma} \cap F_r}]).
$$

On the other hand, by SSM and Lemma \ref{lem:ssm1},
$$
\mu([\Pi_v^{\sigma_n, r}(\fx)_{1_\Gamma}] \vert [\Pi_v^{\sigma_n, r}(\fx)_{\past^{1_\Gamma} \cap F_r}]) \approx_{3\beta(r)} \mu([\Pi_v^{\sigma_n, r}(\fx)_{1_\Gamma}] \vert [\Pi_v^{\sigma_n, r}(\fx)_{\past^{1_\Gamma}}]).
$$
and by Lemma \ref{lem:unifbound} and the Lipschitz property of $\log(\cdot)$,
$$
\log\mu([\Pi_v^{\sigma_n, r}(\fx)_{1_\Gamma}] \vert [\Pi_v^{\sigma_n, r}(\fx)_{\past^{1_\Gamma} \cap F_r}]) \approx_{\frac{3\beta(r)}{c(\phi)}} \log\mu([\Pi_v^{\sigma_n, r}(\fx)_{1_\Gamma}] \vert [\Pi_v^{\sigma_n, r}(\fx)_{\past^{1_\Gamma}}]).
$$

Combining all these partial results, we have that, w.h.p. in $v \in V_n$,
\begin{align*}
\begin{array}{ll}
&\mathbb{E}_{\past_n^v}[\log\mu_{n}([\fx_v] \vert [\fx_{\past_n^v \cap \sigma_n^{F_r}(v)}])]		\\
	=							& \mathbb{E}_{n,v,r,\delta}[\log\mu_{n}([\fx_v] \vert [\fx_{\past_n^v \cap \sigma_n^{F_r}(v)}])]	\\
	\approx_{-2\epsilon\log c(\phi)}		&	\mathbb{E}_{n,v,r,\delta}[\log (\Pi_v^{\sigma_n, r})_*\mu_{n}([\Pi_v^{\sigma_n, r}(\fx)_{1_\Gamma}] \vert [\Pi_v^{\sigma_n, r}(\fx)_{\past^{1_\Gamma} \cap F_r}])]	\\
	\approx_{\frac{\epsilon}{c(\phi)}}		&	\mathbb{E}_{\past}[\log\mu([\Pi_v^{\sigma_n, r}(\fx)_{1_\Gamma}] \vert [\Pi_v^{\sigma_n, r}(\fx)_{\past^{1_\Gamma} \cap F_r}])]	\\
	\approx_{\frac{3\epsilon}{c(\phi)}}	&	\mathbb{E}_{\past}[\log\mu([\Pi_v^{\sigma_n}(\fx)_{1_\Gamma}] \vert [\Pi_v^{\sigma_n}(\fx)_{\past^{1_\Gamma}}])]		\\
	=							&	-I_\mu^{\past}(\Pi_v^{\sigma_n}(\fx)).
\end{array}
\end{align*}

Therefore, 
\begin{align*}
\log Z_n	&	\approx_{f(\epsilon)} \sum_{v \in V_n} I_\mu^{\past}(\pull_v^{\sigma_n}(\fx_n)) + \sum_{v \in V_n} \phi\left(\pull_{v}^{\sigma_n}(\fx_n)\right) + o(|V_n|)	\\
 		&	= 		|V_n|\int{I_\mu^{\past}(x)}dP_{\fx_n}^{\sigma_n}(x) + |V_{n}| \int{\phi(x)}dP_{\fx_n}^{\sigma_n}(x) + o(|V_n|),
\end{align*}
with $f(\epsilon) \to 0$ as $\epsilon \to 0$. Dividing by $|V_n|$, we obtain that
$$
\frac{\log Z_n}{|V_n|}		\approx_{f(\epsilon)}	\int{(I_\mu^{\past}(x) + \phi(x))}dP_{\fx_n}^{\sigma_n}(x) + o(1),
$$
and, since $I^\past_{\mu}$ and $\phi$ are continuous, and $\{P_{\fx_n}^{\sigma_n}\}_n$ converges in a weak* sense to $\nu$, taking the limit in $n$, we conclude that
$$
p_\Sigma(\Gamma \acts X, \phi) = \lim_{n \to \infty} \frac{\log Z_{n}}{|V_n|} \approx_{f(\epsilon)} \int{(I^\past_{\mu} + \phi)}d\nu.
$$

Then $p_\Sigma(\Gamma \acts X, \phi) \approx_{f(\epsilon)} \int{(I^\past_{\mu} + \phi)}d\nu$, and since $\epsilon$ was arbitrary, we conclude.

\end{proof}

\section{Applications}
\label{sec9}

\subsection{Percolative entropy}

Notice that, due to Corollary \ref{cor:nonnegative}, under the TSSM property assumption, we have that $h_\Sigma(\Gamma \acts (X,\mu)) \neq -\infty$. Considering this, we have the following corollary, which recovers results from Alpeev \cite{alpeev2017random} for the full shift $A^\Gamma$ and Austin and Podder \cite{austin2018gibbs} for trees, both with respect to the percolation past $\past_{\mathrm{perc}}$.

\begin{corollary}
\label{cor:perc}
If $X$ satisfies the TSSM property and there is a unique Gibbs measure $\mu$ for $\phi$ that satisfies SSM, then
$$
h_{\Sigma}(\Gamma \acts (X,\mu)) = \int{I^\past_{\mu}}d\mu = \E_\past H_\mu[\alpha \vert \alpha^{\past^{1_\Gamma}}],
$$
for every invariant random past $\past$ and for every sofic approximation $\Sigma$ that can be ordered relative to $\past$.
\end{corollary}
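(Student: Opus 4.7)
The plan is to derive the first equality as a direct consequence of Theorem \ref{thm:main} combined with Corollary \ref{cor:eq}, and the second equality via a Fubini swap. Under the hypotheses that $X$ satisfies TSSM and $\mu$ is the unique Gibbs measure for $\phi$ satisfying SSM, uniqueness guarantees that $\mu$ is ergodic (in particular $\Gamma$-invariant, so $\mu \in \Prob(X,\Gamma)$), and Corollary \ref{cor:eq} shows that $\mu$ is an equilibrium state:
\[
p_\Sigma(\Gamma \acts X, \phi) = h_\Sigma(\Gamma \acts (X, \mu)) + \int \phi \, d\mu.
\]
Since $p_\Sigma(\Gamma \acts X, \phi) \neq -\infty$ by Corollary \ref{cor:nonnegative} and $\int \phi \, d\mu$ is finite (as $\phi$ is continuous on a compact space), this forces $h_\Sigma(\Gamma \acts (X, \mu)) \neq -\infty$, so the hypotheses of Theorem \ref{thm:main} are satisfied with $\nu = \mu$, yielding $p_\Sigma(\Gamma \acts X, \phi) = \int (I^\past_\mu + \phi) \, d\mu$. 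Equating these two expressions for the pressure and subtracting $\int \phi \, d\mu$ produces the first equality $h_\Sigma(\Gamma \acts (X, \mu)) = \int I^\past_\mu \, d\mu$.

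For the second equality, I would unpack the definition $I^\past_\mu(x) = \E_\past[I_\mu(\alpha | \alpha^{\past^{1_\Gamma}})(x)]$ and swap the order of integration via Fubini's theorem:
\[
\int I^\past_\mu \, d\mu = \int \E_\past\!\left[I_\mu(\alpha | \alpha^{\past^{1_\Gamma}})(x)\right] d\mu(x) = \E_\past\!\left[\int I_\mu(\alpha | \alpha^{\past^{1_\Gamma}}) \, d\mu\right] = \E_\past H_\mu[\alpha | \alpha^{\past^{1_\Gamma}}].
\]
Fubini applies because the integrand is uniformly bounded: Lemma \ref{lem:unifbound} combined with TSSM yields $0 \leq -\log \mu([x_{1_\Gamma}] \mid [x_F]) \leq -\log c(\phi) < +\infty$ for every finite $F \subseteq \Gamma \setminus \{1_\Gamma\}$, and by SSM this bound passes to the limit as $F$ grows to $\past^{1_\Gamma}$ (via Lemma \ref{lem:ssm1}, exactly as in the construction of the limiting $f$ preceding Theorem \ref{thm:main}). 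Joint measurability in $(x, \past)$ is inherited from the continuity of each $f_r$ in $x$ together with the measurability of the random past.

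The main obstacle is organizational rather than technical: once the heavy lifting of Theorem \ref{thm:main} and Corollary \ref{cor:eq} is in hand, the corollary reduces to invoking the main theorem with $\nu = \mu$ and performing a bounded-integrand Fubini swap. The subtlest step is verifying that $\mu$ is indeed an admissible input to Theorem \ref{thm:main}, i.e.\ that $h_\Sigma(\Gamma \acts (X, \mu)) \neq -\infty$; this is handled cleanly by chaining the equilibrium-state property with the strict positivity of the pressure from Corollary \ref{cor:nonnegative}.
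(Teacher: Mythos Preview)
Your proposal is correct and follows essentially the same route as the paper: invoke Corollary~\ref{cor:eq} to get the equilibrium-state identity $p_\Sigma = h_\Sigma(\mu) + \int \phi\,d\mu$, apply Theorem~\ref{thm:main} with $\nu = \mu$, subtract, and then swap the order of integration. The only cosmetic differences are that the paper phrases the first step as ``by the variational principle'' (meaning Corollary~\ref{cor:eq}) and justifies the swap via Tonelli (nonnegativity of $I_\mu$) rather than Fubini (your boundedness argument); both justifications are valid here.
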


\begin{proof}
Since $h_\Sigma(\Gamma \acts (X,\mu)) \neq -\infty$, by the variational principle and Theorem \ref{thm:main},
$$
h_{\Sigma}(\Gamma \acts (X,\mu)) + \int{\phi}d\mu = p_{\Sigma}(\Gamma \acts X,\phi) = \int{(I_\mu^\past + \phi)}d\mu,
$$
and, by Tonelli's theorem,
$$
h_{\Sigma}(\Gamma \acts (X,\mu)) = \int{I_\mu^\past}d\mu = \int{\E_\past I_\mu}d\mu = \E_\past [\int{I_\mu}d\mu] = \E^\past H_\nu[\alpha \vert \alpha^{\past^{1_\Gamma}}].
$$
\end{proof}

\begin{remark}
In \cite{alpeev2017random}, it is also considered the \emph{attractive Gibbs structure} condition, which is a property based on an \emph{FKG order} of the configuration space, that is sufficient for obtaining Corollary \cite{alpeev2017random} under the uniqueness assumption but without requiring SSM.
\end{remark}

\begin{remark}
In general, if $\{\mu_n\}_n$ only locally weak* converges to $\mu$, it holds that
$$
h_{\Sigma}(\Gamma \acts (X,\mu)) \leq \E_{\past_{\mathrm{perc}}} H_\mu[\alpha \vert \alpha^{\past_{\mathrm{perc}}^{1_\Gamma}}].
$$
This is observed in \cite{austin2018gibbs}. In addition, it is known that $\E_{\past_{\mathrm{perc}}} H_\mu[\alpha \vert \alpha^{\past_{\mathrm{perc}}^{1_\Gamma}}]$ provides an upper bound for the so-called \emph{Rokhlin entropy}, which is at the same time an upper bound for the sofic entropy of $\mu$ (see \cite{seward2016weak,alpeev2017random}).
\end{remark}

\subsection{Amenable case}

Suppose that $\Gamma$ is amenable. Then, the usual Kolmogorov-Sinai entropy of $\Gamma \acts (X,\nu)$ coincides with the sofic entropy $h_\Sigma(\Gamma \acts (X,\nu))$ for every $\nu \in \Prob(X,\Gamma)$ and for every sofic approximation $\Sigma$ (see \cite{1-kerr}). Since the Kolmogorov-Sinai entropy is never negative and since for every invariant random past $\past$ any sofic approximation induced by a F{\o}lner sequence can be always ordered relative to $\past$ (see Example \ref{exmp:amen}), we have the following corollary.

\begin{corollary}
Let $\Gamma$ be a countable amenable group. Suppose that $X$ satisfies the TSSM property and there is a unique Gibbs measure $\mu$ for $\phi$ that satisfies SSM. Then, for every sofic approximation $\Sigma$,
$$
p_{\Sigma}(\Gamma \acts X,\phi) = \int{(I^\past_{\mu} + \phi)}d\nu,
$$
for every invariant random past $\past: \Gamma \to 2^\Gamma$ and for all $\nu \in \Prob(X,\Gamma)$.
\end{corollary}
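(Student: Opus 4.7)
The strategy is straightforward: apply Theorem \ref{thm:main} to a conveniently chosen sofic approximation and then invoke the independence of sofic pressure from the sofic approximation in the amenable case.

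First I verify the two hypotheses of Theorem \ref{thm:main}: an ordered sofic approximation and the non-triviality $h_\Sigma(\Gamma \acts (X,\nu)) \neq -\infty$. For the entropy condition, in the amenable setting the sofic entropy $h_\Sigma(\Gamma \acts (X,\nu))$ coincides, for every sofic approximation $\Sigma$, with the classical Kolmogorov-Sinai entropy of $\Gamma \acts (X,\nu)$ (e.g.\ \cite{1-kerr}), and the latter is always non-negative; in particular it is not $-\infty$ for any $\nu \in \Prob(X,\Gamma)$. For the ordering condition, fix a F{\o}lner sequence $\{T_n\}_n$ for $\Gamma$ and let $\Sigma_F = \{\sigma_n: \Gamma \to \Sym(T_n)\}_n$ be the associated sofic approximation introduced in Section \ref{sec2}. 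The construction in \ref{exmp:amen} shows that $\Sigma_F$ can be ordered relative to any invariant random past $\past$: the natural projection $\past_n = \pi_n \circ \past$ together with the optimal coupling between $\past^h$ and $\past^{1_\Gamma}h^{-1}$ produces, on the F{\o}lner-good vertices $\{h \in T_n : Fh \subseteq T_n\}$, an exact coincidence with probability one, and F{\o}lner-goodness occurs with density tending to $1$. Consequently Theorem \ref{thm:main} applies to $\Sigma_F$ and yields
\[
p_{\Sigma_F}(\Gamma \acts X,\phi) \;=\; \int \bigl(I^\past_\mu + \phi\bigr)\,d\nu
\]
for every $\past$ and every $\nu \in \Prob(X,\Gamma)$.

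To upgrade this identity from $\Sigma_F$ to an arbitrary sofic approximation $\Sigma$, I invoke the already-cited result \cite[Theorem 1.1]{1-chung}: in the amenable case topological sofic pressure coincides with the classical pressure defined via F{\o}lner sequences, hence $p_\Sigma(\Gamma \acts X,\phi) = p_{\Sigma_F}(\Gamma \acts X,\phi)$ independently of $\Sigma$. Combining this with the formula for $\Sigma_F$ gives the claim. There is no genuine obstacle here: the whole substance of the corollary is already packaged inside Theorem \ref{thm:main} and \ref{exmp:amen}; the only role of amenability is to simultaneously rule out $h_\Sigma = -\infty$ for every $\nu$ and to decouple $p_\Sigma$ from the choice of $\Sigma$.
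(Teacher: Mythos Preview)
Your proof is correct and follows essentially the same approach as the paper: use that in the amenable case sofic entropy equals Kolmogorov--Sinai entropy (hence is nonnegative for every $\nu$), apply Theorem \ref{thm:main} to the F{\o}lner sofic approximation via Example \ref{exmp:amen}, and then pass to an arbitrary $\Sigma$. Your write-up is in fact slightly more explicit than the paper's, since you spell out the last step by invoking \cite[Theorem 1.1]{1-chung} for the independence of $p_\Sigma$ from $\Sigma$, whereas the paper leaves this implicit in the surrounding discussion.
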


\subsection{Independence of sofic approximation}

By Theorem \ref{thm:main}, since every sofic approximation can be ordered relative to the percolation past $\past_{\mathrm{perc}}$, we get that
$$
p_{\Sigma}(\Gamma \acts X,\phi) = \int{(I^{\past_{\mathrm{perc}}}_{\mu} + \phi)}d\mu.
$$

In particular, we observe that the right-hand side does not depend on $\Sigma$. Moreover, since
$$
\frac{\log Z_n}{|V_n|} = \int{(I_\mu^{\past_{\mathrm{perc}}}(x) + \phi(x))}dP_{\fx_n}^{\sigma_n}(x) + o(1),
$$ 
if we take limits to both sides, we have that the right-hand side converges due to the weak* convergence $P_{\fx_n}^{\sigma_n}$. Therefore,
$$
p_{\Sigma}(\Gamma \acts X,\phi) = \lim_n \frac{\log Z_n}{|V_n|},
$$
i.e., we can replace the limit superior by a limit. Considering this, we have the following corollary.

\begin{corollary}
\label{cor:indep}
If $X$ satisfies the TSSM property and $\mu$ satisfies SSM, then $p_\Sigma(\Gamma \acts X, \phi)$ is independent of the sofic approximation $\Sigma$. Moreover, it holds that
$$
p_{\Sigma}(\Gamma \acts X,\phi) = \lim_n \frac{\log Z_n}{|V_n|}	\quad	\text{ and }	\quad	h_{\Sigma}(\Gamma \acts (X,\mu)) = \lim_n \frac{H(\mu_n)}{|V_n|},
$$
i.e., we can replace the limit superior by a limit in both formulas. 
\end{corollary}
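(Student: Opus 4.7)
The plan is to apply Theorem \ref{thm:main} with the percolation past, which has the distinctive property that \emph{every} sofic approximation can be ordered relative to it. Once that formula is in place, the right-hand side will manifestly not depend on $\Sigma$, and a closer look at the approximation will upgrade limsup to lim.

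First I would note that, as stated in Section \ref{sec6}, SSM for $\mu$ implies that $\mu$ is the unique Gibbs measure for $\phi$, so the hypotheses of Theorem \ref{thm:main} are met. Taking $\past = \past_{\mathrm{perc}}$ and $\nu = \mu$, observe that $h_\Sigma(\Gamma\acts(X,\mu)) \ge 0 > -\infty$ (this follows from Theorem \ref{thm:entropy} together with the nonnegativity of Shannon entropy, or more directly from Corollary \ref{cor:nonnegative}). Since the sofic approximation $\Sigma$ can be ordered relative to $\past_{\mathrm{perc}}$ by the construction in Section \ref{sec7}, Theorem \ref{thm:main} yields
$$
p_\Sigma(\Gamma\acts X,\phi) = \int (I^{\past_{\mathrm{perc}}}_\mu + \phi)\,d\mu,
$$
whose right-hand side depends only on $X$, $\phi$ and $\mu$, not on $\Sigma$.

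To replace limsup by lim in the formula for pressure, I would revisit the key estimate inside the proof of Theorem \ref{thm:main}: by Lemma \ref{lem:Xn} there is a sequence $\fx_n \in X^n$ with $P_{\fx_n}^{\sigma_n}$ weak* converging to $\mu$, and along this sequence one has, for every $\epsilon>0$ and every sufficiently large $n$,
$$
\left|\frac{\log Z_n}{|V_n|} - \int (I^{\past_{\mathrm{perc}}}_\mu + \phi)\,dP_{\fx_n}^{\sigma_n}\right| \leq f(\epsilon) + o(1),
$$
where $f(\epsilon) \to 0$ as $\epsilon \to 0$. Since $I^{\past_{\mathrm{perc}}}_\mu + \phi$ is continuous, the integral on the right converges to $\int (I^{\past_{\mathrm{perc}}}_\mu + \phi)\,d\mu$ by weak* convergence, so taking $\limsup$ and $\liminf$ and then letting $\epsilon\to 0$ forces both to agree. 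Hence $\lim_n |V_n|^{-1}\log Z_n$ exists and equals $p_\Sigma(\Gamma\acts X,\phi)$.

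For the entropy formula I would then use the identity already derived inside the proof of Corollary \ref{cor:eq},
$$
\frac{H(\mu_n)}{|V_n|} = \frac{\log Z_n}{|V_n|} - \int \phi\,d\bigl(\Pi^{\sigma_n}_\uparrow(\mu_n)\bigr).
$$
The first term on the right now has an honest limit, by the previous paragraph. For the second, Proposition \ref{lwconv} gives that $\{\mu_n\}_n$ locally weak* converges to $\mu$, hence $\Pi^{\sigma_n}_\uparrow(\mu_n) \to \mu$ weak*; by continuity of $\phi$, the integrals converge to $\int \phi\,d\mu$. Therefore $\lim_n H(\mu_n)/|V_n|$ exists, and by Theorem \ref{thm:entropy} it equals $h_\Sigma(\Gamma\acts(X,\mu))$. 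The only mild subtlety throughout is to track that the error in Theorem \ref{thm:main} is genuinely a two-sided, $\epsilon$-uniform bound whose $\epsilon\to 0$ limit squeezes the limsup and liminf together; once this is observed, everything else is an application of weak* continuity.
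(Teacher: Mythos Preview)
Your proposal is correct and follows essentially the same route as the paper: apply Theorem~\ref{thm:main} with the percolation past (which orders every sofic approximation) and $\nu=\mu$ to get a $\Sigma$-free formula, then revisit the two-sided estimate from that proof to upgrade the $\limsup$ to a limit for $\log Z_n/|V_n|$. The paper's argument (the paragraph preceding the corollary) does not spell out the entropy limit; your use of the identity $H(\mu_n)/|V_n| = \log Z_n/|V_n| - \int\phi\,d(\Pi^{\sigma_n}_\uparrow\mu_n)$ from the proof of Corollary~\ref{cor:eq}, combined with Proposition~\ref{lwconv}, is exactly the natural way to fill that in.
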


\begin{remark}
In particular, by Corollary \ref{cor:indep}, if $X$ satisfies TSSM and the uniform Gibbs measure satisfies SSM, then $h_\Sigma(\Gamma \acts X)$ is independent of the sofic approximation $\Sigma$.
\end{remark}

%
%On the other hand,
%$$
%\log Z_n = -\log\mu_n(\mathbf{x}) + \sum_{v \in V_n} \phi(\Pi_v^{\sigma_n}(\mathbf{x})),
%$$
%so
%$$
%\frac{\log Z_n}{|V_n|} = \int{\frac{\log Z_n}{|V_n|}}d\mu_n = \frac{H(\mu_n)}{|V_n|} + \frac{1}{|V_n|}\sum_{v \in V_n} \int_{X_n}{\phi(\Pi_v^{\sigma_n}(\fx))}d\mu_n(\fx) = \frac{H(\mu_n)}{|V_n|} + \int_{X}{\phi(x)}d\Pi^{\sigma_n}(\mu_n)(x).
%$$
%
%Therefore,
%$$
%\int{(I_\mu^\past + \phi)}d\mu = p(\phi) = h(\mu) + \int{\phi}d\mu,
%$$
%so
%$$
%h_\Sigma(\mu) = \int{I_\mu^\past}d\mu.
%$$
%\end{proof}

%\info{\textcolor{red}{Christopher Shriver} If, in addition, $\mu$ satisfies SSM, then, by Theorem \ref{thm:main}, we can replace all the limit superiors by limits.}

\subsection{Locality of pressure}

For the sake of concreteness, and without loss of generality, suppose that $\Gamma$ is a finitely generated group with generating set $S = \{s_1,\dots,s_d\}$. In such case, we can consider the Cayley structure
$$
\Cay(\Gamma, S) = \left<\Gamma;R_{s_1}(\Gamma),\dots,R_{s_d}(\Gamma)\right>,
$$
with $R_s(\Gamma) = \{(g,sg): g \in \Gamma\}$ for $s \in S$. Now, suppose that $X \subseteq A^\Gamma$ is a subshift of finite type and $\phi: X \to \R$ is a finite range potential. Up to recoding, every subshift of finite type and finite range potential can be expressed using a nearest-neighbor version of them. Therefore, without much loss of generality, we can suppose that
$$
X = \{x \in A^\Gamma: (x(g),x(sg)) \notin R_s(\mathbb{A}) \text{ for all } s \in S\},
$$
for $R_s(\mathbb{A}) \subseteq A \times A$, and $\phi = \sum_{s \in S} \phi_s$ with $\phi_s: X \to \R$ depending exclusively on $x_{(1_\Gamma,s)}$. We call $\mathbb{A} = \left<A; R_{s_1}(\mathbb{A} ),\dots,R_{s_d}(\mathbb{A} )\right>$ a {\bf constraint structure}. Given a Cayley structure $\Cay(\Gamma, S)$ and a constraint structure $\mathbb{A}$, and the corresponding binary relations $R_s(\Gamma) \subseteq \Gamma \times \Gamma$ and $R_s(\mathbb{A}) \subseteq A \times A$ for $s \in S$, we say that a map $x: \Gamma \to A$ is a {\bf structural homomorphism} if, for all $s \in S$,
$$
(g,h) \in R_s(\Gamma) \implies (x(g),x(h)) \in R_s(\mathbb{A} ).
$$

We denote by $\Hom(\Gamma,\mathbb{A})$ the set of structural homomorphisms between  $\Cay(\Gamma, S)$ and $\mathbb{A}$. Then,
\begin{enumerate}
\item we can write $X = \Hom(\Gamma,\mathbb{A})$ for every nearest-neighbor SFT;
\item structural homomorphisms are a generalization of graph homomorphisms; and
\item every SFT can be expressed using a similar formalism, possibly by allowing higher arity relations.
\end{enumerate}

In \cite{3-briceno} it was given a similar treatment of subshifts and there were provided sufficient and necessary conditions on $\mathbb{A}$ for $X = \Hom(\Gamma,\mathbb{A})$ to satisfy the TSSM property. Among those conditions, a very basic one is the existence of a {\bf safe symbol}, this is to say, an element $\textbf{0} \in A$ such that, for all $a,b \in A$,
$$
(a,b) \in R_s(\mathbb{A})	\implies (\textbf{0} ,b),  (a,\textbf{0}) \in R_s(\mathbb{A}).
$$

In simple words, $\textbf{0}$ is a symbol that can replace the appearance of any other symbol in any position without violating the constraints of $X$. There are many classical models in statistical physics that involve a safe symbol (e.g., \emph{hardcore model}, \emph{Widom-Rowlinson model}) and whenever the support $X$ has a safe symbol (or weaker properties like the \emph{unique maximal configuration property} in \cite{4-briceno}), we can find a potential $\phi$ inducing a Gibbs measure that satisfies SSM, so all the results presented in this paper apply. In particular, it is not difficult to see that the Dirac measure $\delta_{\textbf{0}^\Gamma}$ supported on the fixed point $\textbf{0}^\Gamma$ has always zero entropy with respect to any sofic approximation $\Sigma$ and, in particular, $h_\Sigma(\Gamma \acts (X,\delta_{\textbf{0}^\Gamma})) \neq -\infty$. Therefore, by Theorem \ref{thm:main}, we have that
$$
p_{\Sigma}(\Gamma \acts X,\phi) = \int{(I^{\past_\mathrm{perc}}_{\mu} + \phi)}d\delta_{\textbf{0}^\Gamma} = I^{\past_\mathrm{perc}}_{\mu}(\textbf{0}^\Gamma) + \phi(\textbf{0}^\Gamma).
$$

Now, suppose that we have two finitely generated groups $\Gamma_1$ and $\Gamma_2$ with generating sets $S_1$ and $S_2$, respectively, and a constraint structure $\mathbb{A}$. For $i=1,2$, consider the subshift $X_i = \Hom(\Gamma_i,\mathbb{A})$ and a potential $\phi_i: X_i \to \R$ that induces a Gibbs measure $\mu_i$ which satisfies SSM with decay rate $\beta_i$. Now, if we suppose that there exists $r \in \N$ such that $B(1_{\Gamma_1},r+1) \cong B(1_{\Gamma_2},r+1)$, i.e., the $(r+1)$-balls in $G(\Gamma_1,S_1)$ and $G(\Gamma_2,S_2)$ are isomorphic, it follows that
$$
\begin{array}{lll}
&&p_{\Sigma}(\Gamma_1 \acts X_1,\phi_1)	\\
&	=	&	I^{\past_{\mathrm{perc},1}}_{\mu_1}(\textbf{0}^{\Gamma_1}) + \phi_1(\textbf{0}^{\Gamma_1})	\\
								&	=	&	-\E_{\past_{\mathrm{perc},1}}\log\mu_1([(\textbf{0}^{\Gamma_1})_{1_{\Gamma_1}}] \vert [(\textbf{0}^{\Gamma_1})_{\past^{1_{\Gamma_1}}_{\mathrm{perc},1}}]) + \phi_1(\textbf{0}^{\Gamma_1})	\\
								&	\approx_{\frac{\beta_1(r)}{c_{\phi_1}}}	&	-\E_{\past_{\mathrm{perc},1}}\log\mu_1([(\textbf{0}^{\Gamma_1})_{1_{\Gamma_1}}] \vert [(\textbf{0}^{\Gamma_1})_{(\past^{1_{\Gamma_1}}_{\mathrm{perc},1} \cap B(1_{\Gamma_1},r)) \cup \partial B(1_{\Gamma_1},r))}])	+ \phi_1(\textbf{0}^{\Gamma_1})\\
								&	=	&	-\E_{\past_{\mathrm{perc},2}}\log\mu_2([(\textbf{0}^{\Gamma_2})_{1_{\Gamma_2}}] \vert [(\textbf{0}^{\Gamma_2})_{(\past^{1_{\Gamma_2}}_{\mathrm{perc},2} \cap B(1_{\Gamma_2},r)) \cup \partial B(1_{\Gamma_2},r))}]) + \phi_2(\textbf{0}^{\Gamma_2})	\\		
								&	\approx_{\frac{\beta_2(r)}{c_{\phi_2}}}	&	-\E_{\past_{\mathrm{perc},2}}\log\mu_2([(\textbf{0}^{\Gamma_2})_{1_{\Gamma_2}}] \vert [(\textbf{0}^{\Gamma_2})_{\past^{1_{\Gamma_2}}_{\mathrm{perc},2}}]) + \phi_2(\textbf{0}^{\Gamma_2})	\\
								&	=	&	I^{\past_{\mathrm{perc},2}}_{\mu_2}(\textbf{0}^{\Gamma_2}) + \phi_2(\textbf{0}^{\Gamma_2}) + \phi_2(\textbf{0}^{\Gamma_2})	\\
								&	=	&	p_{\Sigma}(\Gamma_2 \acts X_2,\phi_2),
\end{array}
$$
since $\past_{\mathrm{perc},1}$ and $\past_{\mathrm{perc},2}$ ---the corresponding percolation pasts in each group--- have the same distribution on $B(1_{\Gamma_1},r+1) \cong B(1_{\Gamma_2},r+1)$ up to isomorphism and, considering the Markov property, $\mu_1(\cdot \vert [\textbf{0}^{\partial B(1_{\Gamma_1},r))}])$ and $\mu_2(\cdot \vert [\textbf{0}^{\partial B(1_{\Gamma_2},r))}])$ are the same measure (again, up to isomorphism). Therefore,
$$
|p_{\Sigma}(\Gamma_1 \acts X_1,\phi_1) - p_{\Sigma}(\Gamma_2 \acts X_2,\phi_2)|		\leq	\frac{\beta_1(r)}{c_{\phi_1}} + \frac{\beta_2(r)}{c_{\phi_2}}.
$$

For many models it is possible to have a uniform control on $\beta_i$ and $c_{\phi_i}$ for a large family of groups and graphs. For example, in the case of the hardcore model, the constraint structure $\mathbb{A}$ is given by $A = \{0,1\}$ with $R_s(\mathbb{A}) = \{(0,0),(0,1),(1,0)\}$ for every $s \in S$, and the potential, by $\phi(x) = x(1_\Gamma) \log \lambda$, where $\lambda > 0$. Then, given $\Delta \in \N$, there exists a value 
$$
\lambda_c(\Delta) := \frac{(\Delta-1)^{\Delta-1}}{(\Delta-2)^\Delta},
$$ 
such that every hardcore model on a finitely generated group with $|S| \leq \Delta$ and $\lambda \leq \lambda_c(\Delta)$ satisfies SSM with the same decay rate $\beta$ and with the uniform bound $c(\phi)$ (see \cite{1-weitz}). The value $\lambda_c(\Delta)$ corresponds to the \emph{critical activity} of the hardcore model on the infinite $\Delta$-regular tree (see \cite{1-weitz}) and it is optimal in such graph, although there are improvements in terms of the connective constant for other graphs.

Then, in such families, sofic pressure is a \emph{local quantity}, this is to say, for every $\epsilon > 0$, there exists $r \in \N$ such that
$$
B(1_{\Gamma_1},r) \cong B(1_{\Gamma_2},r) \implies |p_{\Sigma}(\Gamma_1 \acts X_1,\phi_1) - p_{\Sigma}(\Gamma_2 \acts X_2,\phi_2)| < \epsilon.
$$

We recommend to check \cite{grimmett2018locality} for an example of locality in the case of connective constants of graphs and \cite{1-sinclair} for an intimate relationship between connective constants and SSM in the hardcore model. By combining these results we have that, under the technical conditions described in \cite{grimmett2018locality}, if two Cayley graphs are isomorphic when restricted to a large ball, then their connective constants are close and, therefore, if one satisfies SSM, the other will do as well, since SSM in the hardcore model holds for all graphs with low enough connective constant. Next, by our result, the corresponding pressures will be close in value. It is possible to generalize this to the case of almost transitive graphs, as discussed in the next subsection.

\subsection{Trees of self-avoiding walks representation}

Recently, in \cite{5-briceno}, it was developed a method to represent the pressure ---or the \emph{free energy}--- of the hardcore model on any almost transitive amenable graph, provided $\lambda$ is small enough. The method is based on an infinite version of the \emph{trees of self-avoiding walks} technique used in \cite{1-weitz} for approximating the partition function of the hardcore model on finite graphs. Considering the results in \cite{1-briceno} developed for the amenable case and Theorem \ref{thm:main}, it is possible to see that the following holds in the sofic case:
\begin{enumerate}
\item for every sofic group $\Gamma$ acting freely and almost transitively on the set of vertices $V(G)$ of a graph $G$, we can find a subshift $X$ such that $\Gamma \acts X$ is conjugated to the natural action of $\Gamma$ on the set $\Hom(G,\mathbb{A})$ of independent sets of $G$ (see \cite[Section 9.3]{5-briceno});
\item there exists $\phi_\lambda: X \to \R$ such that, due to Theorem \ref{thm:main}, for every invariant random past $\past$ and for every sofic approximation $\Sigma$ that can be ordered relative to $\past$, we can write the free energy as
$$
p_{\Sigma}(\Gamma \acts X,\phi_\lambda) = I^{\past}_{\mu}(0^{\Gamma}) + \phi_\lambda(0^{\Gamma}) = -\E_{\past}\log\mu([(0^{\Gamma})_{1_{\Gamma}}] \vert [(0^{\Gamma})_{\past^{1_{\Gamma}}}]),
$$
where $p_{\Sigma}(\Gamma \acts X,\phi_\lambda)$ is independent of $\Sigma$;
\item if the maximum degree of $G$ is bounded from above by $\Delta$ and $\lambda < \lambda_c(\Delta)$, by applying exactly the same method in \cite[Theorem 7.6]{5-briceno}, we obtain that
$$
p_{\Sigma}(\Gamma \acts X,\phi_\lambda) = -\sum_{i=1}^{|G / \Gamma|} \E_{\past} \log \mu_{T_{\mathrm{SAW}}\left(G_{i}(\past, v_{i})\right), \lambda}\left(\left[0^{\rho_{i}}\right]\right),
$$
where
\begin{itemize}
\item $|G / \Gamma|$ is the size of a (or any) fundamental domain $U_0$ of $\Gamma \acts V(G)$;
\item the vertex $v_i$ belongs to $U_0$ for all $i = 1,\dots,|G / \Gamma|$;
\item $G_{i}(\past, v_{i})$ is the (maybe random) subgraph of $G$ induced by the set of vertices
$$
V(G) \setminus \left(\past^{1_\Gamma} U_{0} \cup \left\{v_{1}, \dots, v_{i-1}\right\}\right);
$$
\item $T_{\mathrm{SAW}}\left(G_{i}(\past, v_{i})\right)$ is the tree of self-avoiding walks of $G_{i}(\past, v_{i})$ starting at $v_i$;
\item $\mu_{T_{\mathrm{SAW}}\left(G_{i}(\past, v_{i})\right), \lambda}$ is the unique Gibbs measure on $T_{\mathrm{SAW}}\left(G_{i}(\past, v_{i})\right)$; and
\item $\rho_{i}$ is the root of $T_{\mathrm{SAW}}\left(G_{i}(\past, v_{i})\right)$.
\end{itemize}

In simple words, the previous result says that whenever $\lambda$ is small enough, we can write the pressure of the hardcore model on a graph $G$ with a sofic subgroup $\Gamma$ of automorphisms acting freely and almost transitively on its vertices $V(G)$ in terms of the occupation probability at the roots of some particular trees of self-avoiding walks $T_{\mathrm{SAW}}\left(G_{i}(\past, v_{i})\right)$ on (maybe random) subgraphs $G_{i}(\past, v_{i})$ of $G$. In \cite{5-briceno}, it is also explained how to exploit these techniques in the amenable case for developing efficient approximation algorithms for $p_{\Sigma}(\Gamma \acts X,\phi_\lambda)$ and for working with any subshift of finite type that has a safe symbol, but the results extended naturally to the sofic case after considering Theorem \ref{thm:main}.
\end{enumerate}
% -\log \mu\left([x_{1_\Gamma}] \vert [x_{D \cap B_r}]\right)

%\input{12-constrained-case}

%\section{Extras}

\section*{Acknowledgements}

I would like to thank Andrei Alpeev, Tim Austin, and Brian Marcus for helpful discussions.

%\bibliographystyle{abbrv}
%\bibliography{biblioKP}

%    Text of article.

%    Bibliographies can be prepared with BibTeX using amsplain,
%    amsalpha, or (for "historical" overviews) natbib style.
%\bibliographystyle{amsplain}
\bibliographystyle{abbrv}
\bibliography{biblioKP}
%    Insert the bibliography data here.

\end{document}